\definecolor{darkgreen}{rgb}{0,0.5,0}
\definecolor{darkblue}{rgb}{0,0.1,0.5}
\theoremstyle{remark}
\newtheorem{theorem}{Theorem}[section]
\newtheorem*{theoremX}{Theorem}
\newtheorem{corollary}[theorem]{Corollary}
\newtheorem{definition}[theorem]{Definition}
\newtheorem{example}[theorem]{Example}
\newtheorem{proposition}[theorem]{Proposition}
\newtheorem*{propositionX}{Proposition}
\newtheorem{remark}[theorem]{Remark}
\newcommand{\Rep}{{\rm Rep}}
\newcommand{\cB}{\mathcal{B}}
\newcommand{\cC}{\mathcal{C}}
\newcommand{\cD}{\mathcal{D}}
\newcommand{\cF}{\mathcal{F}}
\newcommand{\cG}{\mathcal{G}}
\newcommand{\cH}{\mathcal{H}}
\newcommand{\cM}{\mathcal{M}}
\newcommand{\cV}{\mathcal{V}}
\newcommand{\cW}{\mathcal{W}}
\newcommand{\fg}{\mathfrak{g}}
\newcommand{\C}{\mathbb{C}}
\newcommand{\Z}{\mathbb{Z}}
\newcommand{\V}{\mathbb{V}}
\newcommand{\Vect}{\mathrm{Vect}}
\newcommand{\reg}{\mathrm{reg}}
\newcommand{\marginparIf}[1]{  }
\begin{document}
\title{Characterizing braided tensor categories associated to logarithmic vertex operator algebras}
\author{Thomas Creutzig, Simon Lentner, and Matthew Rupert}

\maketitle

\begin{abstract}

Given a non-semisimple braided tensor category, with oplax tensor functors from known braided tensor categories, we ask : How does this knowledge characterize the tensor product and the braiding? We develop tools that address this question. In particular we prove that the associator is fixed by the oplax tensor functors, and we show that a distinguished role is played by the coalgebra structure on the image of theses tensor functors.

Our setup constrains the form of quasi bialgebras appearing in the logarithmic Kazhdan-Lusztig conjecture and it applies in particular to the representation categories of the triplet vertex algebras.
Here the two oplax tensor functors are determined by two free field realizations, and the coalgebras mentioned above are the Nichols algebras of type $\mathfrak{sl}_2$. 
We demonstrate in the case of $p=2$ that our setup completely determines the braided tensor category and the realizing quasi-triangular quasi-Hopf algebra is as anticipated in \cite{FGR2}.  
This proves the  logarithmic Kazhdan-Lusztig conjecture  for $p=2$, while for general $p$ it only remains to establish that our characterization provides a unique braided tensor category.
\end{abstract}

\quad\\

\setcounter{tocdepth}{2}
\tableofcontents

\section{Introduction}

There are many interesting connections between representation categories of vertex operator algebras and quantum groups associated to the same simple Lie algebra $\fg$. The most prominent example is the work of Kazhdan and Lusztig \cite{KL1,KL2,KL3,KL4}, where they proved a braided equivalence between representation categories of affine Lie algebras at generic level and quantum groups at an appropriate corresponding parameter $q$. 

A vertex algebra fulfilling sufficient finiteness conditions gives rise to a braided tensor category of representations \cite{Hu1, HLZ}.
Over the last two decades logarithmic theories received increased attention. The word logarithmic comes from conformal field theory and refers to logarithmic singularities in correlation functions, see \cite{CR} for an introduction. This happens if the representation category of the underlying vertex operator algebra is non-semisimple.  It is in general a difficult problem to completely understand the braided tensor category of a given vertex algebra. Our approach is to use structure of the vertex algebra and its tensor category that is fairly accessible in order to characterize a quasi Hopf algebra with braided equivalent tensor category. First examples of vertex algebras with non semi-simple representation categories were the triplet algebras $\cW(p)$ and our main focus are 
correspondences of triplet algebras and related vertex algebras and quasi Hopf modifications of quantum groups. 

Connections between the triplet vertex operator algebra and restricted quantum groups of $\mathfrak{sl}_2$ at roots of unity first appeared in the work of Feigin, Gainutdinov, Semikhatov, and Tipunin \cite{FGST1,FGST2,FGST3,FGST4}. They conjectured therein the ribbon equivalence of particular representation categories of $\mathcal{W}(p)$ and the small quantum group $u_q(\mathfrak{sl}_2)$
 at $2p$-th root of unity $q$, and an abelian equivalence between these categories was later claimed to be shown by Nagatomo and Tsuchiya \cite{NT}. Proofs were however lacking and details have finally and very recently been filled in by McRae and Yang \cite{MY}, which used the work of Adamovic, Milas and others \cite{TW, A, AM1, AM2, AM3, AM4, CJORY,CMY}. For a general Lie algebra $\mathfrak{g}$, the {\it Logarithmic Kazhdan Lusztig Conjecture} is a conjectured ribbon equivalence between the representation categories of higher rank analogs of the triplet algebras and a quantum group $u_q(\mathfrak{g})$ and a $2p$-th root of unity $q$ \cite{FT, AM5,L, S}. 

However, in this form the conjecture cannot be true, because at even roots of unity the category of representations of a quantum group is typically not a (non-semisimple) modular tensor category. For example for $\mathfrak{sl}_2,p>2$ this was demonstrated by Kondo and Saito \cite{KS}. It was then shown by Gainutdinov, Runkel and the first author that there exists a quasi Hopf algebra $\tilde{u}_q(\mathfrak{sl}_2)$  whose underlying algebra is  $u_q(\mathfrak{sl}_2)$ \cite{FGR2,CGR}, and whose representation category is a non-semisimple modular tensor category. This conjecture is a consequence of the conjectural correspondence between the singlet algebra $\mathcal{M}(p)$ and the unrolled restricted quantum group $u_q^H(\mathfrak{sl}_2)$ at $2p$-th root of unity $q$.
This conjecture first appeared in \cite{CGP}, and has been motivated in \cite{CM,CMR,CGR,Ru}. To be precise, it is expected that the category of finite-dimensional weight $u_q^H(\mathfrak{sl}_2)$-modules $\mathcal{C}^H$ should be ribbon equivalent to the smallest subcategory of singlet modules generated by irreducibles with respect to finite sums, tensor products, and quotients. In fact, it was this conjecture that motivated the work of \cite{CGR}. The triplet can be realized as a simple current extension of the singlet, and its representation category is therefore expected to correspond to the category of local modules $\mathrm{Rep}^0 \mathcal{A}_p$ for some commutative algebra object in $\mathcal{A}_p \in \mathcal{C}^H$. The quasi Hopf algebra constructed in \cite{CGR} is precisely the one which realizes $\mathrm{Rep}^0 \mathcal{A}_p$ as its own category of modules. The modified Kazhdan Lusztig conjecture was then that the representation category of this quasi Hopf modification is braided tensor equivalent to the category of the triplet algebra. \\

For arbitrary Lie algebras $\mathfrak{g}$ and even order of $q$ such a modified quasi Hopf algebra $\tilde{u}_q(\mathfrak{g})$ that gives rise to a modular tensor category was constructed by Gainutdinov, Ohrmann and the second author \cite{GLO} and from the perspective of de-equivariantization by Negron \cite{N}. Our construction proceeds in the following way, which is inspired by the Andruskiewitsch Schneider program \cite{AS} for classifying pointed Hopf algebras: It starts with the correct modular tensor category
of vector spaces graded by the Cartan part, an abelian group of here even order, which is a quasi Hopf algebra. On the vertex algebra side, this is the semisimple modular tensor category of representations of the lattice vertex algebra underlying the free field construction. Then one extends this category by Nichols algebras \cite{H} inside the category, which correspond the the Borel parts. On the vertex algebra side, these are the algebras of screening operators \cite{L}. As for Lie algebras, between these two categories there is an adjunctions of induction to Verma modules and restriction to weight spaces. On the vertex algebra side, these is reversed an adjunction of restriction from the lattice vertex algebra and induction.    \\

We aim to develop a general technology to prove braided tensor equivalences between categories of modules of vertex algebras and quasi Hopf algebras. Indeed, there are many more interesting vertex algebras that are expected to be related to quasi Hopf modifications of quantum groups. The most obvious examples are those which are closely related to the triplet such as the singlet algebra $\mathcal{M}(p)$, and $\mathcal B_p$ vertex algebras \cite{CRW, ACKR}. The singlet is the $U(1)$-orbifold of the triplet and the $\mathcal B_p$-algebras  are quantum Hamiltonian reductions of $\mathfrak{sl_{p-1}}$ at level $-(p-1)^2/p$ for the subregular nilpotent element \cite{ACGY}. Their Heisenberg coset is $\mathcal{M}(p)$ \cite{A2, CRW}. We mention here especially the $\mathcal B_p$-algebras as they have relaxed-highest weight modules and spectral flow twists thereof, i.e. modules that neither have finite dimensional conformal weight spaces nor is the conformal weight necessarily lower bounded. Affine vertex (super)algebras have similar representations (if the level is not a positive integer) and understanding their representation theory is of major importance.

\subsection{From vertex operator algebras to quasi Hopf algebras}

We aim to develop a general formalism to characterize the quasi Hopf algebra structure of representation categories of vertex operator algebras. 

\subsubsection{The vertex algebra set-up}

We start by listing assumptions that should hold for general classes of vertex algebras with non semi-simple representation category. 
\begin{enumerate}
	\item Let $\cW$ be a vertex operator algebra and $\cC= \Rep(\cW)$ a vertex tensor category of $\cW$-modules. In particulary it is a braided tensor category. 
	We also assume that every object in $\cC$ is rigid, has integral Frobenius-Perron dimension and that this category is locally finite. Let $U$ be an algebra and $\Psi_\cW :  \Rep(U) \rightarrow \cC$ an equivalence of abelian categories that preserves the Frobenius Perron dimensions.
	\item There is an abelian group $L$ and non-degenerate quadratic form $Q$ on $L$, such that the quasi-fiber functor $ \cC \rightarrow \Vect$ factors through $\Vect_L^Q$. %Moreover there is a subalgebra $C \subset U$, such that $\cD = \Rep(C) =\Vect_L^Q$.
	%SL: Graded dimension
	\item There is a family of  embeddings $\iota_a: \cW \rightarrow \cV$ of vertex operator algebras of finite index for $a= 1, \dots, n$, such that $\Rep(\cV) \cong \Vect_L^Q$.
	In particular there are induction functors $\cF_a: \Rep(\cW) \rightarrow \Rep(\cV)^{\text{tw}}$ to a category $\Rep(\cV)^{\text{tw}}$ that contains $\Rep(\cV)$ as subcategory. The right adjoint $\cG_a$ is just the restriction functor that forgets the structure of the larger algebra $\cV$. The right adjoint of a monoidal functor is oplax and so in particular it provides an lax tensor functor $\cG_a : \Rep(\cV)^{\text{tw}} \rightarrow \Rep(\cW)$. Especially the restriction to $\Rep(\cV)$ of $\cG$ is oplax. We require that
	\[
	\bigotimes_{a=1}^n \cG_a( \Rep(\cV)) 
	\]
	is a projective generator of $\Rep(\cW)$. 
	\item  The semi-simple part of the ribbon twist on objects in $\Rep(\cW)$ is known.
\end{enumerate}
We now comment on these assumptions and explain that they are satisfied in the main example of current interest, the triplet $\cW(p)$. The theory of vertex tensor categories is due to Huang, Lepowsky and Zhang \cite{HLZ} and
there are a few general Theorems ensuring the existence of vertex tensor categories. 
\begin{enumerate}
	\item $C_2$-cofinite vertex operator algebras of positive energy \cite{Hu1}.
	\item Vertex operator algebras with the properties that all irreducible ordinary modules are $C_1$-cofininte and all generalized Verma modules are of finite length \cite{CY}.
	\item Let $V \subset W$ be vertex operator algebras, such that their conformal vectors coincide. If $W$ is an object in a suitable completion of a vertex tensor category $\cC_V$ of $V$-modules, then the category of $W$-modules that lie in $\cC_V$ is a vertex tensor category as well \cite{CKM, CMY2}.
\end{enumerate}	
In particular the triplet algebra $\cW(p)$ is $C_2$-cofinite \cite{AM2} and obviously of positive energy. It is believed but unproven that the category of ordinary modules of the singlet algebra $\cM(p)$  falls into the second type, see section 6 of \cite{CMR}. Currently only vertex tensor category is known for a subcategory \cite{CMY} by using the last approach and  vertex tensor category results of ordinary modules of the Virasoro algebra \cite{CJORY}.
There is no general rigidity Theorem for non-rational VOAs, however it has been explicitly verified for $\cW(p)$ \cite{TW} and for  the just mentioned subcategory of $\cM(p)$ \cite{CMY}.
Integrality of Frobenius-Perron dimensions follows from fusion rules that are determined in both cases \cite{TW, CMY, MY}.
The abelian equivalence between $\Rep(\cW(p))$ and  $\Rep(u_q(\mathfrak{sl}_2))$ is settled as mentioned above. The Cartan subalgebra $C \subset u_q(\mathfrak{sl}_2)$ has as representation category $\Vect_L^Q$.
Verifying that the quasi-fiber functor factors through $\Vect_L^Q$ is just a fusion rule computation that in our case follows from the knowledge that fusion rules in $\Rep(\cW(p))$ and  $\Rep(u_q(\mathfrak{sl}_2))$ coincide for the monoidal structure on $\Rep(u_q(\mathfrak{sl}_2))$  determinend in \cite{CGR} and since the fiber functor for $\Rep(u_q(\mathfrak{sl}_2))$ clearly factors through $\Vect_{\mathbb Z_{2p}}^Q$ the same must be true for $\Rep(\cW(p))$.
Finally, the ribbon twist on VOA-modules is determined by conformal weights, which are among the first quantities one is usually able to determine. 
It remains the family of oplax tensor functors. We prove 
\begin{theoremX}[\ref{thm:oplax}]
	Let $\cC_{\cW(p)}$ be the category of modules of the triplet $\cW(p)$ vertex operator algebra  for integer $p>1$ and let $\cC_{V_L}$ be the category of modules of the lattice vertex algebra of the lattice $L= \sqrt{2p}\mathbb Z$.  
	Then there exist (not additive) subcategories $\cV, \overline{\cV}$ of $\cC_{\cW(p)}$ with the properties that 
	$\cV \boxtimes \overline{\cV}$ is a projective generator of $\cC_{\cW(p)}$ and 
	there are surjective oplax tensor functors $\cG : \cC_{V_L} \rightarrow \cV$ and $\cG^\sigma : \cC_{V_L} \rightarrow \overline{\cV}$.
\end{theoremX}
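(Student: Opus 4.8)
The plan is to realize both functors as restriction functors attached to the free field (screening) presentation of $\cW(p)$. Recall that $\cW(p)$ is the kernel of a screening operator $\ker Q$ acting on the rank-one lattice vertex algebra $V_L$, $L=\sqrt{2p}\,\mathbb Z$, once $V_L$ is equipped with the background-charge conformal vector whose central charge matches that of $\cW(p)$; this exhibits $\cW(p)$ as a conformal sub-vertex-algebra $\cW(p)\subset V_L$ of finite index, so that $V_L$ is an algebra object in $\cC_{\cW(p)}$. A second, conjugate screening — obtained by applying the reflection $\sigma\colon a\mapsto -a$ of the underlying Heisenberg field — presents the same lattice vertex algebra through a second such embedding. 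I would define $\cG,\cG^{\sigma}\colon\cC_{V_L}\to\cC_{\cW(p)}$ to be the two restriction functors forgetting the $V_L$-action, and let $\cV,\overline{\cV}$ be their essential images as (non-additive) full subcategories. With these definitions, surjectivity of $\cG$ onto $\cV$ and of $\cG^{\sigma}$ onto $\overline{\cV}$ is tautological, so the content lies in the oplax tensor structure and the projective-generator property.

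The oplax structure I would obtain from the general mechanism. As a lattice extension in the ribbon category $\cC_{\cW(p)}$, the object $V_L$ is a commutative Frobenius algebra, so induction $\cF\colon\cC_{\cW(p)}\to\cC_{V_L}$ is a strong tensor functor that is \emph{biadjoint} to restriction, $\cF\dashv\cG\dashv\cF$. Regarding $\cG$ as the left adjoint in the second adjunction $\cG\dashv\cF$ and applying doctrinal adjunction to the lax (indeed strong) functor $\cF$ equips $\cG$ with a canonical oplax tensor structure; concretely the structure morphisms $\cG(X\boxtimes Y)\to\cG(X)\boxtimes\cG(Y)$ are induced by the comultiplication of the coalgebra structure on $V_L$, which splits the quotient map defining the relative tensor product. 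Coherence is exactly the content of doctrinal adjunction. The same argument applied to the $\sigma$-twisted embedding equips $\cG^{\sigma}$ with an oplax tensor structure.

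For the projective-generator property I would argue in two stages. First, $\cC_{V_L}\cong\Vect_L^Q$ is semisimple, so every object of $\cC_{V_L}$ is projective; and because $\cF$ is exact — tensoring with the dualizable object $V_L$ is exact in the rigid category $\cC_{\cW(p)}$ — its left adjoint $\cG$ preserves projectives. Hence every object of $\cV$ and of $\overline{\cV}$ is projective, and rigidity then makes each $\cG(X)\boxtimes\cG^{\sigma}(Y)$ projective as well. It remains to prove generation: that every indecomposable projective of $\cC_{\cW(p)}$ occurs as a direct summand of some $\cG(X)\boxtimes\cG^{\sigma}(Y)$.

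This final step is where the real work lies and where both realizations are genuinely needed. I would first compute the restrictions $\cG(V_{L+\lambda})$ and $\cG^{\sigma}(V_{L+\lambda})$ of the $2p$ simple currents $\lambda\in L^{*}/L\cong\mathbb Z_{2p}$ explicitly, by decomposing the underlying sums of Fock modules into indecomposable $\cW(p)$-modules; each family yields projectives adapted to one of the two screening directions. Feeding these into the known fusion rules of $\cC_{\cW(p)}$, I would evaluate the products $\cG(X)\boxtimes\cG^{\sigma}(Y)$ and verify, by a finite check over the $2p$ simple types, that the resulting projectives collectively contain every indecomposable projective cover as a direct summand. The main obstacle is precisely this combinatorial surjectivity: a single restriction functor only sees the projectives built from one screening, so one must show that tensoring the two conjugate families reconstructs the full self-dual projective covers — the vertex-algebra counterpart of building projective $u_q(\mathfrak{sl}_2)$-modules by inducing from both nilpotent subalgebras.
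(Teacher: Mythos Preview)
Your overall architecture matches the paper's, but there is a genuine gap in the projectivity step.

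You claim that every object of $\cV$ (and $\overline{\cV}$) is projective, arguing that $\cG$ is left adjoint to the exact functor $\cF$ and hence preserves projectives. This fails for two linked reasons. First, induction $\cF$ does \emph{not} land in the semisimple category $\cC_{V_L}$: a generic $\cW(p)$-module induces to a \emph{twisted} $V_L$-module, so $\cF$ takes values in a strictly larger, non-semisimple category $\Rep(V_L)^{\mathrm{tw}}$. The simple currents $V_{L+\lambda}$ are not projective there, so even granting $\cG\dashv\cF$ on the big category, there is nothing to transport. Second, and decisively, the restrictions are computed explicitly in the paper and are \emph{not} projective: $\cG(V_{L+\alpha_{r,s}})\cong V_{r,s}$ is a length-two ``Verma'' module sitting in a non-split extension $0\to W_{r,s}\to V_{r,s}\to W_{3-r,p-s}\to 0$ for $s\neq p$, and similarly $\cG^\sigma(V_{L+\alpha_{r,s}})\cong\overline{V}_{r,s}$. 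So your ``first stage'' collapses, and with it the reduction of the problem to a mere generation check.

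What the paper actually does is prove directly that the \emph{fusion products} $V_{r,s}\boxtimes\overline{V}_{r',s'}$ are projective, even though the factors are not. This is the substantive computation: working first at the level of the singlet algebra $\cM(p)$, one determines the fusion rules $M_{r',s'}\boxtimes F_{r,s}$ and $M_{r',s'}\boxtimes\overline{F}_{r,s}$ from the known $M\boxtimes M$ rules, then uses the two short exact sequences for $F$ and $\overline{F}$ to squeeze $F_{r,1}\boxtimes\overline{F}_{r',1}$ between enough constraints to force it to be $\bigoplus_\ell P_{r+r'-1,\ell}$. Projectivity of all $F\boxtimes\overline{F}$ then follows since each is a summand of some $M_{1,s}\boxtimes M_{1,s'}\boxtimes(F_{r,1}\boxtimes\overline{F}_{r',1})$, and one passes to the triplet by the monoidal induction functor $\cF:\cC^0_{\cM(p)}\to\cC_{\cW(p)}$, which sends projectives to projectives. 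The explicit fusion outputs then exhibit every indecomposable $R_{\bar r,\ell}$ as a summand. There is no abstract shortcut here; the projectivity is genuinely a fusion-rule phenomenon reflecting that Verma $\boxtimes$ opposite-Verma is projective.

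A smaller point: your $\sigma$ is described as the Heisenberg reflection $a\mapsto -a$. The paper instead takes $\sigma$ to be the Weyl involution inside the $PSL(2,\mathbb{C})$ automorphism group of $\cW(p)$, and a short but nontrivial argument (using the fusion action of the simple current $M_{3,1}$) is needed to show that the $\sigma$-twisted Fock modules really are the opposite Verma modules $\overline{F}_{r,s}$ rather than $F_{r,s}$ again. Without this identification you cannot distinguish $\cV$ from $\overline{\cV}$, and the whole point is that one needs both.
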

We prove this by twisting the usual embedding of the triplet in $V_L$ by an automorphism and then compute relevant fusion rules. We used the results of \cite{CMY}.  We find, more explicitly spoken, the fusion product of Verma module and twisted (opposite) Verma module is projective.

\subsubsection{The characterization Theorem}

With the above assumptions on the vertex algebra $\cW$ and its associated algebra $U$ in place, we consider by Tannakian reconstruction a quasi bialgebra structure on $U$, such that $\cC = \Rep(\cW)$ is tensor equivalent $\Rep(U)$. First we prove a slightly enhanced ``relative" version of Tannakian reconstruction relevant to our situation:\\

%\begin{proposition}[Proposition \ref{prop1}]
%	Let $\mathcal{C}$ be a finite tensor category with integral Frobenius-Perron dimensions and $U$ an algebra such that we have an abelian equivalence $\cC \cong \mathrm{Rep}(U)$ identifying Frobenius-Perron dimensions in $\cC$ with vector space dimensions in $\mathrm{Rep}(U)$. Then, $U$ can be given the structure of a quasi-Hopf algebra such that $\mathcal{C}$ and $\mathrm{Rep}(U)$ are tensor equivalent.
%\end{proposition}

\begin{propositionX}[\ref{prop2}]
	Let $\cC, \cD$ be a finite tensor categories such that there exists a quasi-fiber functor $F_{\cD}:\cD \to \mathrm{Vec}$ and an essentially surjective quasi-tensor functor $G:\cC \to \cD$. Then we have an inclusion $C \subset U$ of the associated quasi bialgebra $U=\mathrm{End}(F_{\cD} \circ G)$, $C=\mathrm{End}(F_{\cD})$ preserving the coproduct, counit, and algebra structure. 
\end{propositionX}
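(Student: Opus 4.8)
The plan is to realise the inclusion $C \hookrightarrow U$ concretely as whiskering of natural transformations by the functor $G$, and then to verify compatibility with the algebra, coalgebra, and counit structures one at a time, deliberately leaving the associator aside. Recall that an element of $C = \End(F_{\cD})$ is a natural endotransformation $\eta = (\eta_D)_{D\in\cD}$ of $F_{\cD}$, and an element of $U = \End(F_{\cD}\circ G)$ is a natural endotransformation $(\theta_X)_{X\in\cC}$ of the composite quasi-fiber functor $F_{\cD}\circ G$ from which $U$ is reconstructed. I would define $\phi\colon C\to U$ by precomposition, $\phi(\eta)_X := \eta_{G(X)}$; naturality of $\phi(\eta)$ in $X$ follows from naturality of $\eta$ together with functoriality of $G$, and since composition of natural transformations is objectwise, $\phi(\eta\eta')_X = \eta_{G(X)}\circ\eta'_{G(X)}$ shows that $\phi$ is a unital algebra homomorphism. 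Injectivity is where essential surjectivity of $G$ enters: if $\phi(\eta)=0$ then $\eta_{G(X)}=0$ for all $X$, and for arbitrary $D\in\cD$ I would pick an isomorphism $f\colon G(X)\xrightarrow{\sim}D$ and use naturality of $\eta$ to get $\eta_D\circ F_{\cD}(f) = F_{\cD}(f)\circ\eta_{G(X)} = 0$; invertibility of $F_{\cD}(f)$ forces $\eta_D=0$, so $\eta=0$ and $\phi$ is an inclusion.

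The coproduct is the step that requires genuine care, and I expect it to be the main obstacle, because the coproducts on $C$ and on $U$ are defined through different tensor constraints. Write $J_{\cD}\colon F_{\cD}(X)\otimes F_{\cD}(Y)\to F_{\cD}(X\otimes Y)$ for the tensor structure of $F_{\cD}$ and $J_G\colon G(X)\otimes G(Y)\to G(X\otimes Y)$ for that of $G$; then the tensor structure of the composite is $J_{F_{\cD}G} = F_{\cD}(J_G)\circ J_{\cD}$. Tannakian reconstruction characterises $\Delta_C(\eta)=\sum\eta_{(1)}\otimes\eta_{(2)}$ by the objectwise identity $J_{\cD}^{-1}\circ\eta_{X\otimes Y}\circ J_{\cD} = \sum(\eta_{(1)})_X\otimes(\eta_{(2)})_Y$, and $\Delta_U$ by the analogous identity using $J_{F_{\cD}G}$. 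Evaluating $\Delta_U(\phi(\eta))$ on $F_{\cD}(GX)\otimes F_{\cD}(GY)$ and substituting $J_{F_{\cD}G}=F_{\cD}(J_G)\circ J_{\cD}$ produces
\[
J_{\cD}^{-1}\circ F_{\cD}(J_G)^{-1}\circ \eta_{G(X\otimes Y)}\circ F_{\cD}(J_G)\circ J_{\cD}.
\]
The crucial point is that naturality of $\eta$ applied to the isomorphism $J_G$ collapses $F_{\cD}(J_G)^{-1}\circ\eta_{G(X\otimes Y)}\circ F_{\cD}(J_G)$ to $\eta_{G(X)\otimes G(Y)}$, so the expression becomes $J_{\cD}^{-1}\circ\eta_{G(X)\otimes G(Y)}\circ J_{\cD} = \sum(\eta_{(1)})_{GX}\otimes(\eta_{(2)})_{GY}$, which is precisely the action of $(\phi\otimes\phi)(\Delta_C(\eta))$. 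Since an element of $U\otimes U$ is determined by its action on all spaces $(F_{\cD}G)(X)\otimes(F_{\cD}G)(Y)$, this yields $\Delta_U(\phi(\eta))=(\phi\otimes\phi)(\Delta_C(\eta))$, i.e. $\phi$ intertwines the coproducts.

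The counit is then a simpler instance of the same mechanism: evaluating on $(F_{\cD}G)(\unit_{\cC})=F_{\cD}(G(\unit_{\cC}))$ and using the unit constraint $G(\unit_{\cC})\cong\unit_{\cD}$ together with naturality identifies the scalar $\eps_U(\phi(\eta))$ with $\eps_C(\eta)$. It is worth stressing what is \emph{not} asserted, and why the obstacle above is the only one: the associators of $C$ and of $U$ need not correspond under $\phi$, since $G$ is only quasi-tensor and its failure to respect the associativity constraints is exactly what can deform one associator relative to the other. Thus the proposition claims precisely the three compatibilities — algebra, coproduct, counit — that survive the passage through a quasi-tensor functor, and the heart of the argument is the single naturality cancellation that removes the discrepancy $F_{\cD}(J_G)$ between the two tensor structures.
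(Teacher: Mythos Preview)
Your argument is correct, and it takes a more direct route than the paper. The paper establishes the inclusion by passing to the dual side: it writes down the coend presentations $\mathrm{Coend}(F_{\cD})$ and $\mathrm{Coend}(F_{\cD}\circ G)$ as quotients $\bigoplus F(-)^*\otimes F(-)/E$, observes that essential surjectivity of $G$ together with $E_{\cC}\subset E_{\cD}$ yields a surjection of coends, and then dualizes to the inclusion $\End(F_{\cD})\hookrightarrow\End(F_{\cD}\circ G)$. For the preservation of coproduct and counit the paper simply invokes \cite[Theorem~5.2.1]{EGNO}, noting that these are determined by the action on $F(X\otimes Y)$ and $F(\unit)$.

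Your approach stays entirely on the End side: you define the inclusion as whiskering $\eta\mapsto\eta G$, check injectivity from essential surjectivity, and then give the explicit naturality cancellation $F_{\cD}(J_G)^{-1}\circ\eta_{G(X\otimes Y)}\circ F_{\cD}(J_G)=\eta_{G(X)\otimes G(Y)}$ that makes the two coproducts agree. This is more elementary and more transparent about \emph{why} the discrepancy between $J_{F_{\cD}G}$ and $J_{\cD}$ does not obstruct compatibility of coproducts (while it can obstruct compatibility of associators). The paper's coend argument, on the other hand, packages the injectivity step more conceptually and connects directly to the standard reconstruction formalism in \cite{EGNO}.
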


Then we prove that if the category can be ``covered" with images of oplax tensor functors, then the previous inclusion is an inclusion of quasi bialgebras resp. $G$ is a true tensor functor. In essence, the proof reflects the fact that the regular representation of a bialgebras is a coalgebra in the respective tensor category, but for a true quasi bialgebra this fails. \\

\begin{theoremX}[\ref{Catthm}]
	Let $\cC, \cD$ be a finite tensor categories such that there exists a quasi-fiber functor $F_{\cD}:\cD \to \mathrm{Vec}$ and an essentially surjective quasi-tensor functor $G:\cC \to \cD$ as in Proposition \ref{prop2}. 	Suppose we have oplax monoidal functors $\mathbb{V}_k:\mathrm{Rep}(C) \to \mathrm{Rep}(U)$, $k=1,...,n$ such that $\bigotimes_k \mathbb{V}_k(C_{reg})$ is a projective generator of $\mathrm{Rep}(U)$, such that in addition the coproduct $\delta$ induced by the coproduct of the regular $C$-$C$-bimodule $C_{reg}$ sends the image of a generator $1$ to a  $U$-$U$-bimodule generator.
	Then there exist a twist $U^J$ of $U$, such that $\Phi_{U^J}=\Phi_C$ and thus $C=\mathrm{End}(F_{\cD})$ is a sub quasi bialgebra of $U^J$. Moreover there is a unique choice of such a twist such that $\delta(1)=1\otimes 1$.
\end{theoremX}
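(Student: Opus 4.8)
The plan is to extract the twist directly from the oplax structure of the functors $\V_k$, following the principle recalled before the statement: the regular object is a comonoid only up to the associator. For any quasi bialgebra the coproduct $\Delta_U$ makes $U_{reg}$ a comonoid in $(\Rep(U),\Phi_U)$, coassociative up to $\Phi_U$, and likewise $C_{reg}$ is a comonoid in $(\Rep(C),\Phi_C)$. Since each $\V_k$ is oplax monoidal it carries the comonoid $C_{reg}$ to a comonoid in $\Rep(U)$, and assembling the factors produces the comonoid $(P,\delta)$ with $P=\bigotimes_k\V_k(C_{reg})$. The comparison morphism packaging this construction is the oplax structure morphism of the combined functor; evaluated on the regular comonoid it is exactly $J:=\delta(1)\in U\otimes U$. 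The oplax coherence of $J$ with the two associators will be, verbatim, the twist relation between $\Phi_U$ and $\Phi_C$.

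Concretely, I would first identify $P$ with $U_{reg}$: as $P$ is a projective generator one has $\Rep(U)\simeq\mathrm{mod}\text{-}\End_U(P)$, and a Frobenius--Perron dimension count places the comparison on the free rank-one module, where every morphism out of $U_{reg}$ is right multiplication by its value at the generator, so $\delta(x)=\Delta_U(x)\,J$. The hypothesis that $\delta$ sends the generator $1$ to a $U$--$U$--bimodule generator of $P\otimes P\cong(U\otimes U)_{reg}$ says precisely that $J$ generates the free rank-one bimodule, which forces $J$ to be invertible; thus $J$ is upgraded from an a priori merely oplax (possibly non-invertible) datum to a genuine twist. The counit axiom for $(P,\delta)$ supplies $(\varepsilon\otimes\id)J=(\id\otimes\varepsilon)J=1$.

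The core computation is the coassociativity of $(P,\delta)$. Writing it out in $(\Rep(U),\Phi_U)$, evaluating both composites on the generator and substituting $\delta(x)=\Delta_U(x)J$, the coassociativity of $C_{reg}$ in $(\Rep(C),\Phi_C)$ transported through the oplax coherence of the $\V_k$ yields the identity
\[
(\unit\otimes J)\,(\id\otimes\Delta_U)(J)\,\Phi_U=\Phi_C\,(J\otimes\unit)\,(\Delta_U\otimes\id)(J),
\]
which is exactly the statement that $\Phi_C$ is the $J$-twist of $\Phi_U$, i.e.
\[
\Phi_{U^{J}}=(\unit\otimes J)(\id\otimes\Delta_U)(J)\,\Phi_U\,(\Delta_U\otimes\id)(J^{-1})(J^{-1}\otimes\unit)=\Phi_C\in C^{\otimes3}.
\]
Together with Proposition \ref{prop2}, which already embeds $C$ in $U$ compatibly with coproduct, counit and multiplication, this exhibits $C$ as a sub quasi bialgebra of $U^{J}$. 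I expect the main obstacle to be precisely the bookkeeping in this step: tracking the oplax structure morphisms of the several $\V_k$ and the combination of the factors so that the transported defect is genuinely $\Phi_C$ and not a spurious coboundary of $\Phi_U$. This is where the hypotheses on the projective generator and on the bimodule generator do their real work, the latter guaranteeing invertibility of $J$ and hence that the oplax datum is an honest gauge twist.

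For the uniqueness clause I would observe that two twists producing the same associator differ by a gauge transformation $T=\Delta_U(u)(u^{-1}\otimes u^{-1})$ with $u\in U^{\times}$, $\varepsilon(u)=1$, these being exactly the twists acting trivially on $\Phi$. Such a $T$ rescales $\delta(1)$ by $\Delta_U(u)(u^{-1}\otimes u^{-1})$, so imposing $\delta(1)=1\otimes1$ in the twisted frame removes the residual freedom and pins down a unique $J$.
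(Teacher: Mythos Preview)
Your overall strategy---extract a twist $J$ from $\delta(1)$ and then read off the associator relation from coassociativity---is the right instinct and is essentially what the paper does via Proposition~\ref{bimod}~b). But two concrete steps in your proposal do not go through.

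\medskip

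\textbf{The identification $P\cong U_{\reg}$ is false.} A projective generator need not be the regular module, and no Frobenius--Perron count saves this: already in the paper's main example $p=2$ one has $\dim \V(C_{\reg})=\dim \overline{\V}(C_{\reg})=8$, so $\dim P=64$, while $\dim u_i(\mathfrak{sl}_2)=16$. Thus your formula $\delta(x)=\Delta_U(x)\,J$ with $J\in U\otimes U$ has no meaning, and neither does your displayed twist identity. The paper never attempts this identification. Instead it works with each $\V_k(C_{\reg})$ \emph{individually} as a $U$--$C$--bimodule coalgebra (obtained by applying $\V_k\boxtimes\Id$ to the $C$--$C$--bimodule coalgebra $C_{\reg}$), produces the twist $J=\delta(1)^{-1}$ there, and concludes that $\Phi_{U^J}$ and $\Phi_C$ agree \emph{as operators on} $\V_k(C_{\reg})^{\otimes 3}$. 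Only at the very end is the projective generator hypothesis on $\bigotimes_k\V_k(C_{\reg})$ invoked, and solely to promote this local agreement to the global equality $\Phi_{U^J}=\Phi_C$.

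\medskip

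\textbf{Where does $\Phi_C$ come from?} Your displayed identity contains $\Phi_C$, but your setup does not supply it. If one views the image comonoid $(\V_k(C_{\reg}),\delta)$ merely as a left $U$-module coalgebra, its coassociativity reads $\Phi_U\cdot(\delta\otimes\id)\delta=(\id\otimes\delta)\delta$: the oplax coherence hexagon uses $\Phi_C$ on the source side and $\Phi_U$ on the target side, but after composing with $\V_k(\Delta_C)$ the source-side $\Phi_C$ is absorbed by the coassociativity of $\Delta_C$ and disappears from the final relation. The paper's mechanism for keeping $\Phi_C$ visible is the \emph{right $C$-action} on $\V_k(C_{\reg})$: the $U$--$C$--bimodule coalgebra axiom is
\[
\Phi_U\cdot(\delta\otimes\id)\delta(x)\cdot\Phi_C^{-1}=(\id\otimes\delta)\delta(x),
\]
and evaluating this at $x=1$ after normalizing to $\delta^J(1)=1\otimes 1$ yields $\Phi_{U^J}\cdot\Phi_C^{-1}=1$ directly. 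This right $C$-action is the missing ingredient in your argument.

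\medskip

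Finally, your uniqueness argument is not quite right: twists leaving $\Phi$ unchanged form the $2$-cocycles, not just the coboundaries $\Delta_U(u)(u^{-1}\otimes u^{-1})$. The paper's uniqueness is more elementary: once $J$ is constrained by $\delta^J(1)=J\,\delta(1)=1\otimes 1$, it is forced to equal $\delta(1)^{-1}$.
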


 The technical use of this theorem is that it fixes a twist representative of $U$ and that it provides us a-priori with $U$-$C$-comodule coalgebras $\mathbb{V}_k(C_{reg})$ with a known associator $\Phi_U=\Phi_C$. Eventually, we can then determine these coalgebras explicitly and thus our category in question. 

\subsubsection{The correspondence of $\cW(2)$ and $\tilde{u}_i(\mathfrak{sl}_2)$}

Now we consider the vertex algebra $\cW=\cW(2)$, the algebra $U=u_i(\mathfrak{sl}_2)=\tilde{u}_i(\mathfrak{sl}_2)$, its Cartan subalgebra $C=\mathbb{C}[\mathbb{Z}_4]$, and the lattice $L=\sqrt 2 A_1$. Denote by $\V,\overline{\V}:\mathrm{Rep}(C) \to \Rep(U)$ the oplax tensor functors which send an irreducible $C$-module $\mathbb{C}_k$ of weight $i^k$ to the corresponding Verma and opposite Verma modules $M_k$ and $\overline{M}_k$ of highest weight $i^k$. Then we have the following

\begin{theorem}\label{classif}
	The abelian category $\Rep(U)$ admits up to equivalence a unique structure of a braided tensor category such that $\V,\overline{\V}:\Rep(C) \to \Rep(U)$ are braided oplax tensor functors with the braiding on $C$ given by the quadratic form $Q(k)=\beta^{(k^2)}$, for fixed $\beta^4=-1$.
\end{theorem}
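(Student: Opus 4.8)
The plan is to peel off the three pieces of data defining a braided tensor structure---associator, tensor product (coproduct), and braiding ($R$-matrix)---and show that each is forced. Existence is not the issue: such a structure is realized on $\Rep(U)$ via the abelian equivalence $\Psi_{\cW}$ with $\Rep(\cW(2))$, equivalently by the explicit quasi-triangular quasi-Hopf algebra of \cite{FGR2}, so I would record this and then concentrate on uniqueness. Thus I start from an arbitrary braided tensor structure on the abelian category $\Rep(U)$ for which $\V,\overline{\V}$ are braided oplax with the prescribed braiding $Q(k)=\beta^{(k^2)}$ on $C$, and aim to show it is braided equivalent to the known one.

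First I would fix the associator by invoking Theorem \ref{Catthm}. The hypotheses are in place: the restriction-to-Cartan functor $G:\Rep(U)\to\Rep(C)$ along $C\subset U$ is an essentially surjective quasi-tensor functor, $\Rep(C)=\Vect_{\Z_4}^Q$ carries the quasi-fiber functor $F_{\cD}$, and by the algebraic counterpart of Theorem \ref{thm:oplax} the object $\V(C_{reg})\otimes\overline{\V}(C_{reg})$ is a projective generator with $\delta(1)$ a bimodule generator. Theorem \ref{Catthm} then supplies a unique twist $U^J$, normalized by $\delta(1)=1\otimes 1$, with $\Phi_{U^J}=\Phi_C$. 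Since $\Phi_C$ is the Eilenberg--MacLane associator of $\Vect_{\Z_4}^Q$ attached to the quadratic form $Q$, the associator is now completely pinned down, and by Proposition \ref{prop2} the inclusion $C\subset U^J$ respects coproduct, counit and multiplication.

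Next I would determine the coproduct. By the remark following Theorem \ref{Catthm}, the functors furnish $\V(C_{reg})$ and $\overline{\V}(C_{reg})$ as $U$-$C$-comodule coalgebras with the known associator $\Phi_U=\Phi_C$, and it remains to compute them. For $p=2$ each Verma module $M_k$ is two-dimensional, spanned by a highest-weight vector and its image under $F$, and the induced coalgebra structure is that of the rank-one Nichols algebra of type $\mathfrak{sl}_2$ at $q=i$, i.e. $\C[F]/(F^2)$ with its braided coproduct. Demanding that the $U$-coaction be a coalgebra and comodule map, together with the bimodule-generator condition on $\delta(1)$, forces $\Delta(F)=F\otimes K^a+1\otimes F$ for a grouplike $K^a\in C$ dictated by the grading, and symmetrically $\Delta(E)$ from $\overline{\V}$; combined with $\Delta|_C$ from Proposition \ref{prop2} this determines $\Delta$ on all of $U^J$.

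Finally I would pin down the braiding. The braided-oplax condition on $\V$ and $\overline{\V}$ identifies the restriction of the universal braiding to the Cartan with the braiding of $\Vect_{\Z_4}^Q$, fixing the diagonal grouplike part of the $R$-matrix through $Q$, while the remaining $E\otimes F$ part is constrained by the hexagon axioms relative to the already-determined $\Delta$ and $\Phi_U$. Because $\V(C_{reg})\otimes\overline{\V}(C_{reg})$ generates $\Rep(U)$, knowing the braiding on this object determines it everywhere, and the residual freedom is exactly the gauge absorbed by the ``up to equivalence'' clause. I expect the main obstacle to lie in this last rigidity step together with the coproduct computation: showing that the Nichols-algebra coalgebra data plus the projective-generator hypothesis leave no freedom in $\Delta$, and that the hexagons then admit a unique $R$-matrix up to twist. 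This is where the explicit truncation $E^2=F^2=0$ at $p=2$ becomes essential, and where one finally verifies that the resulting quasi-triangular quasi-Hopf algebra is the $\tilde{u}_i(\mathfrak{sl}_2)$ anticipated in \cite{FGR2}.
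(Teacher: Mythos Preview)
Your three-step outline (associator via Theorem~\ref{Catthm}, then coproduct, then $R$-matrix) matches the paper's architecture, but there is a genuine gap in the coproduct step and a consequent misdiagnosis of what the braiding does.

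You assert that the coalgebra data on $\V(C_{reg})$ force $\Delta(F)=F\otimes K^a+1\otimes F$ for some grouplike $K^a$. This is not what happens. Even after normalizing $\delta(1)=1\otimes 1$, the $U$-$C$-bimodule coalgebra structures on $\V(C_{reg})$ form a multi-parameter family (Proposition~\ref{prop_Vreg}): $\delta(F\cdot 1)$ involves coefficients $c_L^{ab},c_R^{ab}$ determined by free scalars $c_1,c_2,c_3\in\C^\times$ and a sign $\epsilon$, and likewise $\bar c_a,\bar\epsilon$ for $\overline{\V}$. Imposing the algebra relations $E^2=F^2=0$ and $[E,F]=\tfrac12(K-K^{-1})$ on $\Delta$ cuts this down (Proposition~\ref{prop_EF}), and algebra automorphisms absorb most of the rest (Proposition~\ref{prop_automorphism}), but what survives is still a genuine one-parameter family of quasi-bialgebras indexed by a scalar $d$: $\Delta(F)$ can be put in standard form, but $\Delta(E)$ retains $d$. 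In particular the second tensorand in $\Delta(F)$ is $\omega_{-\epsilon}=((e_0+e_2)-i\epsilon(e_1+e_3))K$, which is not a power of $K$, so your guessed form is already off.

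Because of this, the role of the braiding is not what you describe. It is not that the hexagons merely determine $R$ once $\Delta$ is fixed, with residual gauge freedom; rather, for generic $d$ the hexagons have \emph{no} solution extending $R_C$. Proposition~\ref{braiding} shows that an $R$-matrix exists iff $d=\pm i$, and those two choices yield isomorphic quasi-triangular quasi-Hopf algebras. So braidability is the selection mechanism that collapses the family to a single equivalence class; your sketch would leave a continuum of inequivalent tensor structures unaccounted for.
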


\noindent It follows from Theorem \ref{thm:oplax} that the triplet algebra $\mathcal{W}(2)$ admits two such oplax tensor functors $\V,\overline{\V}:\Rep V_L \to \Rep \mathcal{W}(2)$ where the $Q$ is determined by the quadratic form of the lattice $L=\sqrt{2} A_1$, so $\beta=e^{2\pi i(1/8)}$. It then follows that the abelian category $\mathrm{Rep}(U)$ of the Kazhdan-Lusztig dual $U$ of $\mathcal{W}(2)$ must admit the functors described in Theorem \ref{classif}. We therefore obtain as a corollary a proof of the Kazhdan-Lusztig conjecture for $\mathcal{W}(2)$:
\begin{corollary}
There is a braided tensor equivalence of  $\Rep(\mathcal{W}(2))$  to the representation category of the previously constructed quasi-Hopf algebra for this value of $\beta$. The quasi Hopf algebra coincides with the one constructed in \cite{FGR2}.
\end{corollary}

The proof has the following steps:
\begin{itemize}
\item We have already established that $C$ is a sub quasi bialgebra of $U$, identified with the Cartan part $\C[K]/(K^ 4-1)$. We have four possibilities parametrized by $\beta^4=-1$, two of which are equivalent.
\item In Proposition \ref{prop_Vreg} we determine the possible coproducts on the $U$-$C$-coalgebras $\V(C_{reg}),\delta$ and $\overline{\V}(C_{reg}),\bar{\delta}$ fulfilling in addition $\delta(1)=\bar{\delta}(1)=1\otimes 1$. What we find matches the expectation of a dual Nichols algebra of type $\mathfrak{sl}_2$ (a truncated polynomial ring) and a family of twists of the Cartan part parametrized by $c^a,\bar{c}^a$.
\item From this we can directly compute in Proposition \ref{prop_EF} all coproducts  $\Delta(E),\Delta(F)$. Some general arguments show that there are no mixed $E$-$F$-terms, so the coproducts are determined by the previous steps and their compatibility implies a relation between $c^a,\bar{c}^a$. We also use the existence of an antipode for a nonzero condition.
\item In Proposition \ref{prop_automorphism} we apply algebra automorphisms to reduce $\Delta(F)$ to a standard form and find a $1$-parameter family of coproducts $\Delta(E)$ parametrized by a scalar $d$ (and a sign $\epsilon$ that disappears after switching $E,F$). These are precisely all quasi Hopf algebras resp. tensor categories with the desired properties. For $d=\pm i$ we recover the quasi Hopf algebra in \cite{FGR2} (for the negative sign after switching $K,K^ {-1}$)
\item In Proposition \ref{braiding} we compute all extensions of the $R$-matrix of $C$ to $U$. We find that our previous quasi Hopf algebras are only braidable if $d=\pm i$, which concludes our result.     
\end{itemize}

%	\begin{itemize}
%		\item The left and right grading can be reinterpreted as a $C$-grading and a $\Z_4$-grading with $1$ in degree zero (left grading minus right grading)
%		\item A-priori this grading allows in $\delta(1)$ terms $F\otimes F$ resp. $F^{i}\otimes F^{p-i}$, but we have fixed a choice of twist such that $\delta(1)=1\otimes 1$. This also fixes by coassociativity that $\Delta(F)$ contains only terms $F^i\otimes F^j$ for $i+j=1$, not $i+j=p+1$. This is a new argument necessary for $p>2$.
%		\item More conceptually we should look at the dual algebra
%		$\mathrm{co}\V(C_\reg)$. Then this is a $\Z_4$-graded algebra generated by a primitive element (this has to be proven), so its graded algebra is the Nichols algebra and the algebra itself can have relations in mixed degree $n+4,n$. But the condition $\delta(1)=1\otimes 1$ translates to a projection to $1$, so there are no relations in degree $4,0$ and then by associativity there are no higher mixed relations.
%		\item Then we fix the possible $C\otimes C$ factor in $\Delta(F1)$. More conceptually we could already at this point multiply $F$ by $C$ to fix $F\otimes 1$ so $c_L^{ab}=1$ and calculate the possible $1\otimes F$ (but care has to be taken to be compatible with the choices for $E$, so that both simultaneously can be achieved by an algebra automorphism, but there again this is automatic).  
%		\item Similarly the use of coassociativity in the later proposition becomes much more transparent in the dual algebra. 
%	\end{itemize}
%\end{remark}

\subsection{Outlook}

We have seen that our characterization of the category of the $\cW(2)$-algebra already uniquely up to twist equivalence determines the quasi Hopf algebra structure on $\Rep(\tilde{u}_i(\mathfrak{sl}_2))$. It is work in progress to show that the same happens for general $\cW(p)$. We hope to report on this in the near future. 

Next, one would like to settle the equivalence between the singlet algebras and the unrolled quantum groups. For this the vertex tensor category results of \cite{CMY} need to be extended to the complete category of ordinary modules. We remark that none of our results require finiteness assumptions on the category, as long as we start with a known equivalence of abelian categories to representations over an algebra, finite or infinite dimensional.

More generally, we would like to apply our techniques to the logarithmic Kazhdan Lusztig conjecture and beyond: Many vertex algebras characterized as intersections of kernels of sets of screening charges acting on lattice vertex algebras, Heisenberg vertex algebras, or some more complicated vertex algebras.
The  logarithmic Kazhdan Lusztig conjecture is concerned with the Feigin-Tipunin algebras \cite{FT, S}. These are the natural higher rank generalizations of the triplet $\cW(p)$, and their representation category is conjectured to be equivalent to the representation category of a corresponding quasi quantum group. Higher rank generalizations of the singlet and $\mathcal B_p$-algebras also exist \cite{CM2, C}, and they are equally nicely characterized as kernels of screenings on free field vertex algebras. Before applying our technology to these cases much of the representation theory of these algebras has to first be studied. 
There is however already one very interesting example whose representation theory is well enough understood, namely the affine vertex superalgebra of  $\mathfrak{gl}_{1|1}$ \cite{CMY}. Unlike the Lie algebra case, its category of ordinary modules has uncountable infinitely many inequivalent simple objects. We note that nonetheless we will try to employ our technology to determine the quasi Hopf superalgebra whose representation category is braided equivalent to the category of ordinary modules of the affine vertex superalgebra of  $\mathfrak{gl}_{1|1}$. The underlying algebra must of course be the quantum supergroup of $\mathfrak{gl}_{1|1}$, i.e. we expect the linear equivalence to be easy to see. This example would thus provide a first Kazhdan-Lusztig correspondence for an affine vertex superalgebra and a quantum supergroup. Again it is clear from \cite{L} that the respective screening algebras are the Nichols algebras and it is natural to expect that they give precisely the coalgebra structures on the image of the oplax tensor functors. Other more exotic Nichols algebras over abelian groups or in more complicated braided tensor categories point to more exotic examples beyond super Lie algebras.  \\

\begin{remark}
	Terry Gannon and Cris Negron informed us that they proved the equivalence between $\cW(p)$-mod and the quasi Hopf modification of $\tilde{u}_q(\mathfrak{sl}_2)$-mod at $2p$-th root of unity \cite{GN}.  Their approach uses a universal property of the Temperly-Lieb category \cite{O} and de-equivariantization \cite{N} and is very different from ours. 
\end{remark}

\section{The Singlet and Triplet Vertex Operator Algebras}

Let $\cC_{\cW(p)}$ be the category of modules of the triplet $\cW(p)$ vertex operator algebras  for integer $p>1$ and let $\cC_{V_L}$ be the category of modules of the lattice vertex algebra of the lattice $L= \sqrt{2p}\mathbb Z$.  
The aim of this section is to prove the following Theorem, which summarizes Corollary \ref{cor:projective} and Proposition \ref{prop:oplax}.
\begin{theorem}\label{thm:oplax}
	There exist subcategories $\cV, \overline{\cV}$ of $\cC_{\cW(p)}$ with the properties that 
	$\cV \boxtimes \overline{\cV}$ is a projective generator of $\cC_{\cW(p)}$ and 
	there are surjective oplax tensor functors $\cG : \cC_{V_L} \rightarrow \cV$ and $\cG^\sigma : \cC_{V_L} \rightarrow \overline{\cV}$.
\end{theorem}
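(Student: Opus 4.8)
The plan is to build the two functors from the standard free-field (lattice) realization of the triplet and its conjugate, and then to reduce the two assertions---the oplax tensor structure and the projective-generator property---to fusion computations in $\cC_{\cW(p)}$ that can be extracted from \cite{CMY}. First I would fix the standard embedding $\iota \colon \cW(p) \hookrightarrow V_L$ with $L=\sqrt{2p}\,\Z$, under which $V_L$ becomes a haploid commutative algebra object in the braided category $\cC_{\cW(p)}$. Restriction along $\iota$ gives a functor $\cG \colon \cC_{V_L} \to \cC_{\cW(p)}$, and I define $\cV$ to be its essential image, a (non-additive) subcategory whose objects are the restrictions of the simple-current $V_L$-modules, i.e. the Verma-type $\cW(p)$-modules. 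To produce the second functor I would twist $\iota$ by the conjugation automorphism $\sigma$ of $V_L$ (induced by $\alpha \mapsto -\alpha$ on the lattice), obtaining a second embedding and the restriction functor $\cG^\sigma \colon \cC_{V_L} \to \cC_{\cW(p)}$, whose essential image $\overline{\cV}$ then consists of the opposite Verma-type modules. Both functors are surjective onto their images by construction.

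Next I would equip $\cG$ and $\cG^\sigma$ with their oplax tensor structures. The point is that a lattice extension $\cW(p) \subset V_L$ is a commutative \emph{Frobenius} algebra object in $\cC_{\cW(p)}$: besides the multiplication (which makes the forgetful functor lax) it carries a comultiplication, and the resulting splitting of the quotient $\cG(X) \boxtimes_{\cW(p)} \cG(Y) \to \cG(X \boxtimes_{V_L} Y)$ furnishes maps $\cG(X \boxtimes_{V_L} Y) \to \cG(X) \boxtimes_{\cW(p)} \cG(Y)$, i.e. the required oplax monoidal structure, with the counit supplying the unit constraint. The same argument applies verbatim to the $\sigma$-twisted algebra for $\cG^\sigma$. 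Concretely these structure maps are read off from the intertwining operators of the lattice theory, so verifying the coherence axioms reduces to associativity and commutativity of those intertwiners, which is available in the vertex-tensor formalism of \cite{HLZ, CKM}.

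The heart of the argument is the fusion computation. Using the fusion rules of $\cC_{\cW(p)}$ determined in \cite{CMY}, together with the decomposition of $V_L$ and of its $\sigma$-conjugate as $\cW(p)$-modules, I would compute the fusion product $\cG(X) \boxtimes_{\cW(p)} \cG^\sigma(Y)$ of a Verma module with a twisted opposite Verma module and show that it is projective. The mechanism is that the two free-field realizations approach the relevant irreducibles from opposite sides, so their fusion cannot be simple and is forced onto a projective cover; concretely one identifies the fusion product with the projective cover of the appropriate irreducible by matching composition factors and invoking that projectivity is detected by the known $\mathrm{Ext}$-vanishing against simples. This is the step I expect to be the main obstacle, since it demands control of indecomposable---not merely Grothendieck-ring---fusion, and the twisting is precisely what forces the product onto a full projective rather than onto a simple current.

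Finally I would assemble the projective generator. The projective indecomposables of $\cC_{\cW(p)}$ are classified; running the fusion computation over the $2p$ choices of each argument and decomposing the resulting projective objects $\cG(X)\boxtimes_{\cW(p)}\cG^\sigma(Y)$ of $\cV \boxtimes \overline{\cV}$ into indecomposables, I would check that every projective indecomposable occurs as a summand (the simple projectives appearing directly among the restrictions). It then follows that the direct sum of the objects of $\cV \boxtimes \overline{\cV}$ is a projective generator of $\cC_{\cW(p)}$, which is the content combined from Corollary \ref{cor:projective} and Proposition \ref{prop:oplax}.
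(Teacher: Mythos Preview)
Your overall architecture matches the paper's, but there is a genuine gap in the choice of twisting automorphism. You take $\sigma$ to be the $-1$ involution of $V_L$, but restriction along $(-1)\circ\iota$ sends $V_{L+\mu}$ to $\cG(V_{L-\mu})$, which is again a Verma-type module; hence your $\overline{\cV}$ coincides with $\cV$, and the fusion of two Vermas is not projective in general, so the projective-generator claim fails. The paper instead twists by the involution $\sigma\in\mathrm{Aut}(\cW(p))\cong PSL(2,\mathbb{C})$ given by the nontrivial Weyl reflection: it fixes the Virasoro field and swaps $W^+\leftrightarrow W^-$, and it is \emph{not} induced by the lattice $-1$ map (the latter does not preserve the background-charge Virasoro vector of $\cW(p)$ inside $V_L$). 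Identifying the $\sigma$-twisted Fock modules $\pi^\sigma_{\alpha_{r,s}}$ with the opposite Vermas $\overline{F}_{r,s}$ is itself a nontrivial step, which the paper carries out via a fusion argument with the simple current $M_{3,1}$.

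Two smaller remarks. First, the fusion step is heavier than your sketch indicates: the products $V_{r,s}\boxtimes\overline{V}_{r',s'}$ are not directly available in \cite{CMY} or \cite{TW}. The paper first establishes new \emph{singlet} fusion rules $M_{r',s'}\boxtimes F_{r,s}$ and $M_{r',s'}\boxtimes\overline{F}_{r,s}$, then pins down $F_{r,1}\boxtimes\overline{F}_{r',1}=\bigoplus_\ell P_{r+r'-1,\ell}$ by a short-exact-sequence argument (matching the required sub- and quotient-modules against the Loewy structure of $P_{r+r'-1,1}$), and only then induces to the triplet via the monoidal functor $\cF$. Your heuristic that the two realizations ``approach from opposite sides'' is the right intuition but does not replace this computation. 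Second, your Frobenius-algebra argument for the oplax structure is a reasonable alternative to the paper's one-line appeal to the adjunction with induction; either route should work once the correct $\sigma$ is in place.
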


We review the singlet $\cM(p)$ and triplet $\cW(p)$ vertex operator algebras  for integer $p>1$. We restrict to the category $\mathcal C^0_{\cM(p)}$ of \cite{CMY}, since the category $\cC_{\cW(p)}$ of the triplet algebra is obtained from this one via vertex algebra extension. Moreover the category $\mathcal C^0_{\cM(p)}$ is braided monoidal and rigid \cite{CMY} and most fusion rules are known. We will determine missing ones in a moment. 
Let $\alpha_+ = \sqrt{2p}, \alpha_- = -\sqrt{2/p}$, $\alpha_0 = \alpha_+ + \alpha_-$ and 
\[
\alpha_{r, s} = \frac{1-r}{2} \alpha_+ + \frac{1-s}{2}\alpha_-
\]
Then the simple modules of the singlet algebra are denoted by $M_{r, s}$ for integers $r, s$ and in addition $1\leq s \leq p$. 
The top level of these modules has conformal weight
\[
h_{r, s} = \frac{\alpha_{r, s}(\alpha_{r, s}- \alpha_0)}{2}
\]
and so especially the ribbon twist on these modules is $e^{2\pi i h_{r, s}}\text{Id}_{M_{r, s}}$. 
The projective cover of $M_{r, s}$ is denoted by $P_{r, s}$.
The modules $M_{r, p}$ are projective, $M_{r, p} = P_{r, p}$. There are then further modules $F_{r, s}$ and $\overline{F}_{r, s}$ uniquely characterized by  fitting in the non-split exact sequence
\begin{equation}\label{ses1}
0 \rightarrow M_{r, s} \rightarrow F_{r, s} \rightarrow M_{r+1, p-s} \rightarrow 0, \qquad 
0 \rightarrow M_{r, s} \rightarrow \overline{F}_{r, s} \rightarrow M_{r-1, p-s} \rightarrow 0
\end{equation}
for $s \neq p$ and $F_{r, p} \cong \overline{F}_{r, p} \cong M_{r, p} \cong P_{r, p}$.
The $F_{r, s}$ and $\overline{F}_{r, s}$ are submodules and quotients of $P_{r, s}$ and fit in the non-split exact sequences
\begin{equation}\label{ses2}
0 \rightarrow F_{r, s} \rightarrow P_{r, s} \rightarrow F_{r-1, p-s} \rightarrow 0, \qquad 
0 \rightarrow \overline{F}_{r, s} \rightarrow P_{r, s} \rightarrow \overline{F}_{r+1, p-s} \rightarrow 0
\end{equation}
for $s\neq p$. The $M_{n+1, 1}$ are simple currents and satisfy the fusion rules
\begin{equation}
\begin{split}
M_{n+1, 1} \boxtimes M_{r, s} &\cong M_{r+n, s} \\
M_{n+1, 1} \boxtimes P_{r, s} &\cong P_{r+n, s} \\
M_{n+1, 1} \boxtimes F_{r, s} &\cong F_{r+n, s} \\
M_{n+1, 1} \boxtimes \overline{F}_{r, s} &\cong \overline{F}_{r+n, s}.
\end{split}
\end{equation}
These fusion rules are proven in \cite{CMY}, see especially section 3.2. The only one not stated there is the last one, but it follows immediately from simple currents preserving non-split short exact sequences, see Proposition 2.5 of \cite{CKLR}.

The triplet algebra $\cW(p)$ has simple modules $W_{r, s}$ with $r \in \{1, 2\}$ and $s \in \{ 1, \dots, p\}$. 
The ribbon twist on these modules is $e^{2\pi i h_{r, s}}\text{Id}_{W_{r, s}}$.
The module $W_{r, p}$ is projective and otherwise one has the extensions characterized by the non-split exact sequences 
\begin{equation}\label{Wses1}
0 \rightarrow W_{r, s} \rightarrow V_{r, s} \rightarrow W_{3-r, p-s} \rightarrow 0, \qquad 
0 \rightarrow W_{r, s} \rightarrow \overline{V}_{r, s} \rightarrow W_{3-r, p-s} \rightarrow 0.
\end{equation}
The modules $V_{r, s}$ and $\overline{V}_{r, s}$ are constructed as follows. 
The triplet algebra is 
\[
\cW(p) \cong \bigoplus_{n \in \mathbb Z} M_{1+2n, 1}
\]
as a singlet module. In particular there is an induction functor $\cF : \cC^0_{\cM(p)} \rightarrow \cC_{\cW(p)}$ that satisfies \cite{CMY}
\begin{equation}
\begin{split}
\cF(M_{r, s}) &= W_{\bar r, s} \\
\cF(P_{r, s}) &= R_{\bar r, s} \\
\cF(F_{r, s}) &= V_{\bar r, s} \\
\cF(\overline{F}_{r, s}) &= \overline{V}_{\bar r, s} \\
\end{split}
\end{equation}
Here $\bar r = 2$ if $r$ is even and $\bar r =1$ if $r$ is odd. The modules $R_{\bar r, s}$ are all projective and in particular the induction of a projective module is always projective. 
The last two can be taken as the definiton of the corresponding triplet modules and the short-exact sequences follow from exactness of $\cF$, see \cite{CKM}. The right adjoint of $\cF$ is the forgetful functor $\cG$. It satisfies
\[
\cG(\cF(M)) \cong \bigoplus_{n \in \mathbb Z} M_{2n+1, 1} \boxtimes M
\]
for an object $M$ in  $\cC^0_{\cM(p)}$. The important properties are that $\cF$ is monoidal and that Frobenius reciprocity holds, that is
\begin{equation}
\begin{split}
\cF(M \boxtimes M') &\cong \cF(M) \boxtimes \cF(M')\quad \text{and} \quad \\
\text{Hom}_{\cC_{\cW(p)}}(\cF(M), X) &\cong \text{Hom}_{\text{Ind}(\cC^0_{\cM(p)})}(M, \cG(X))
\end{split}
\end{equation}
for objects $M, M'$ in $\cC^0_{\cM(p)}$ and $X$ in $\cW(p)$. Here $\text{Ind}(\cC^0_{\cM(p)})$ is the direct limit completion of $ \cC^0_{\cM(p)}$ \cite{CMY2} (alternatively one can also take the direct sum completion of \cite{AR}). 
Frobenius reciprocity tells us that $V_{r, s}$  and  $\overline{V}_{r, s}$ are not isomorphic since
\[
\text{Hom}_{\cC_{\cW(p)}}(V_{r, s} , \overline{V}_{r, s}) \cong  \text{Hom}_{\cC_{\cW(p)}}(\cF(F_{r, s}) , \cF(\overline{V}_{r, s})) 
=   \text{Hom}_{\text{Ind}(\cC^0_{\cM(p)})}(F_{r, s}, \bigoplus_{n \in \mathbb Z}  \overline{F}_{r+2n, s} ) = 0.
\]

\subsection{Fusion Rules}

We need the fusion rules for $F_{r, s}$ with $\overline{F}_{r', s'}$. 
Define 
\[
P_{r', s', r, s} := \bigoplus_{\substack{ \ell =2p+1-s-s' \\  \ell +s + s' \ \text{odd}}}^{p} P_{r+r'-1, \ell}.
\]
\begin{theorem}
	The fusion rules 
	\begin{equation}
	\begin{split}
	M_{r', s'} \boxtimes M_{r, s} &=P_{r', s', r, s} \oplus  \bigoplus_{\substack{ \ell = |s-s'| +1 \\  \ell +s + s' \ \text{odd}}}^{\text{min}\{s+s'-1, 2p-1-s-s' \}} M_{r+r'-1, \ell} \\
	M_{r', s'} \boxtimes F_{r, s} &=P_{r', s', r, s} \oplus P_{r', s', r+1, p-s } \oplus  \bigoplus_{\substack{ \ell = |s-s'| +1 \\  \ell +s + s' \ \text{odd}}}^{\text{min}\{s+s'-1, 2p-1-s-s' \}} F_{r+r'-1, \ell} \\
	M_{r', s'} \boxtimes \overline{F}_{r, s} &=P_{r', s', r, s} \oplus P_{r', s', r-1, p-s } \oplus  \bigoplus_{\substack{ \ell = |s-s'| +1 \\  \ell +s + s' \ \text{odd}}}^{\text{min}\{s+s'-1, 2p-1-s-s' \}} \overline{F}_{r+r'-1, \ell} \\
	\end{split}
	\end{equation}
	hold.
\end{theorem}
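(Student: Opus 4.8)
The plan is to deduce the two fusion rules involving $F_{r,s}$ and $\overline{F}_{r,s}$ from the product of two simple modules (the first displayed identity), which I take as the established input from \cite{CMY}. Since $\cC^0_{\cM(p)}$ is rigid, tensoring with the fixed object $M_{r',s'}$ has both adjoints and is therefore exact in each variable. Applying $M_{r',s'}\boxtimes(-)$ to the defining non-split sequence $0\to M_{r,s}\to F_{r,s}\to M_{r+1,p-s}\to 0$ from \eqref{ses1} yields a short exact sequence
\[
0\to M_{r',s'}\boxtimes M_{r,s}\to M_{r',s'}\boxtimes F_{r,s}\to M_{r',s'}\boxtimes M_{r+1,p-s}\to 0,
\]
whose outer terms are known by the first rule. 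A routine bookkeeping of the summation ranges and of the parity condition ``$\ell+s+s'$ odd'' --- which I would carry out once --- shows that the simple constituents of the sub-object are exactly the $M_{r+r'-1,\ell}$ and those of the quotient are exactly the $M_{r+r',p-\ell}$, over matching index sets, so that the Jordan--H\"older content of $M_{r',s'}\boxtimes F_{r,s}$ coincides with that of the proposed answer $P_{r',s',r,s}\oplus P_{r',s',r+1,p-s}\oplus\bigoplus_\ell F_{r+r'-1,\ell}$. The $\overline{F}$ rule is obtained verbatim from the second sequence in \eqref{ses1}, the only change being a shift of the first label in the quotient from $r+r'-1$ to $r+r'-2$, matching the defining sequence of $\overline{F}_{r+r'-1,\ell}$.

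It then remains to promote this equality of composition factors to an isomorphism. First I would split off the projective part: the two summands $P_{r',s',r,s}$ and $P_{r',s',r+1,p-s}$ appear inside the sub- and quotient-objects respectively, and since projective modules in this category are also injective (as in \cite{CMY}), a projective sub- or quotient-object is a direct summand; this reduces the problem to an extension $0\to\bigoplus_\ell M_{r+r'-1,\ell}\to X'\to\bigoplus_\ell M_{r+r',p-\ell}\to 0$ with semisimple ends. The claim is that $X'\cong\bigoplus_\ell F_{r+r'-1,\ell}$, i.e. that the extension is both non-split and ``diagonal,'' gluing each $M_{r+r',p-\ell}$ only onto the corresponding $M_{r+r'-1,\ell}$ in the manner prescribed by \eqref{ses1}.

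This last point is where the real work lies, and I expect it to be the main obstacle. Note that there is no shortcut through associativity of the tensor product: the first fusion rule shows that $F$-type modules never occur in a product of two simples, so the summands $F_{r+r'-1,\ell}$ must be produced genuinely by the gluing of the two semisimple layers. To pin the extension down I would use rigidity to rewrite Hom-spaces, computing $\Hom(M_{r',s'}\boxtimes F_{r,s},\,M_{r+r',p-\ell})\cong\Hom(F_{r,s},\,M_{r',s'}^{*}\boxtimes M_{r+r',p-\ell})$ together with the analogous space detecting a simple submodule, in order to show that $X'$ has no simple direct summand and that its socle is precisely $\bigoplus_\ell M_{r+r'-1,\ell}$. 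Combined with the known $\mathrm{Ext}$-quiver of the simple singlet modules --- specifically that $\mathrm{Ext}^1(M_{r+r',p-\ell},M_{r+r'-1,\ell})$ is one-dimensional and realized by $F_{r+r'-1,\ell}$, while the off-diagonal and reversed $\mathrm{Ext}$-groups between the relevant simples vanish --- this forces the non-split diagonal structure and hence $X'\cong\bigoplus_\ell F_{r+r'-1,\ell}$, completing the argument.
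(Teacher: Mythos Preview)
Your approach is a legitimate alternative, but it is genuinely different from the paper's. The paper does not attack the general extension problem directly. Instead it (i) first settles the fusion with the generator $M_{1,2}$ by tensoring not only \eqref{ses1} but also \eqref{ses2}, so that $M_{1,2}\boxtimes F_{r,s}$ is realised as an explicit submodule of a known projective $P_{r,s-1}\oplus P_{r,s+1}$, which pins it down immediately; (ii) passes to the quotient by the tensor ideal of projectives, where the fusion of $\cH(F_{r,s})$ with $\cH(M_{1,2})$ and with the simple currents has exactly the same shape as that of $\cH(M_{r,s})$; (iii) uses that the full fusion $M_{r',s'}\boxtimes M_{r,s}$ is generated by associativity and commutativity from these two, so the same induction gives the non-projective part of $M_{r',s'}\boxtimes F_{r,s}$ as $\bigoplus_\ell F_{r+r'-1,\ell}$; and finally (iv) reads off the projective summand by comparing composition factors with $M_{r',s'}\boxtimes(M_{r,s}\oplus M_{r+1,p-s})$.

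Your direct method does work: the off-diagonal forward groups $\mathrm{Ext}^1(M_{r+r',p-\ell'},M_{r+r'-1,\ell})$ vanish, forcing $X'$ to be a diagonal extension, and the rigidity-based $\Hom$ computation does show that $M_{r+r',p-\ell}$ is not in the socle (one checks that $M_{2-r',s'}\boxtimes M_{r+r',p-\ell}$ has no summand mapping nontrivially to $F_{r,s}$, the index constraint $\ell\ge|s-s'|+1$ being exactly what is needed). One small correction: the \emph{reversed} diagonal group $\mathrm{Ext}^1(M_{r+r'-1,\ell},M_{r+r',p-\ell})$ does \emph{not} vanish --- it is realised by $\overline{F}_{r+r',p-\ell}$ --- but this is irrelevant to your argument since the class of $X'$ lives only in the forward $\mathrm{Ext}^1$. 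The trade-off between the two approaches is that the paper localises the delicate extension analysis to the single base case $M_{1,2}$, where the extra sequence \eqref{ses2} gives a free embedding into a projective, while your argument must carry out the rigidity $\Hom$ computation uniformly in $r',s'$; once the bookkeeping is written out, the two are of comparable length.
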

\begin{proof}
	The first fusion rule of the Theorem is part of the main Theorem of \cite{CMY}. The other rules follow from these together with rigidity and the structure of the projective modules. 
	
	We start with the fusion rules $M_{n+1,1} \boxtimes M_{r, s} = M_{r+n, 1}$, i.e. the $M_{n_1, 1}$ are simple currents. These preserve non-split exact sequences \cite[Prop. 2.5 ]{CKLR} and thus $M_{n+1,1} \boxtimes F_{r, s} = F_{r+n, 1}$.
	Next, we use
	\begin{equation}
	\begin{split}
	M_{1, 2} \boxtimes M_{r, s} &= \begin{cases}  M_{r, 2} & \quad s=1 \\ M_{r, s-1} \oplus M_{r, s+1} &\quad 1<s<p \\ P_{r, p-1} &\quad s=p \end{cases} \\
	M_{1, 2} \boxtimes P_{r, s} &= \begin{cases}  P_{r, 2} \oplus P_{r+1, p} \oplus P_{r-1, p} & \quad s=1 \\ P_{r, s-1} \oplus P_{r, s+1} &\quad 1<s<p-1 \\ P_{r, p-2} \oplus 2P_{r, p} &\quad s=p-1 \end{cases}
	\end{split}
	\end{equation}
	Consider the case $s=1$, We tensor \eqref{ses1} and \eqref{ses2} with $M_{1, 2}$ and since $\mathcal C^0_{\cM(p)}$ is rigid we obtain the short exact sequences
	\begin{equation}\label{ses3}
	\begin{split}
	&0 \rightarrow M_{r, 2} \rightarrow M_{1, 2} \boxtimes F_{r, 1} \rightarrow M_{r+1, p-2} \oplus M_{r+1, p} \rightarrow 0, \qquad  \\
	&0 \rightarrow M_{r-1, p-2} \oplus M_{r-1, p}  \rightarrow M_{1, 2} \boxtimes F_{r-1, p-1} \rightarrow M_{r, 2} \rightarrow 0, \qquad  \\
	&0 \rightarrow M_{1, 2} \boxtimes F_{r, 1} \rightarrow P_{r, 2} \oplus P_{r+1, p} \oplus P_{r-1, p} \rightarrow M_{1, 2} \boxtimes F_{r-1 , p-1}  \rightarrow 0
	\end{split}
	\end{equation}
	Recall that $M_{r \pm 1, p} \cong P_{r \pm1, p}$ and hence we get the fusion rule
	\[
	M_{1, 2} \boxtimes F_{r, 1} = F_{r, 2} \oplus  P_{r+1,p}, \qquad M_{1, 2} \boxtimes F_{r, p-1} = F_{r, p-2} \oplus  P_{r,p}
	\]
	Let now $1 < s < p-1$. Tensoring \eqref{ses1} and \eqref{ses2} with $M_{1, 2}$ we obtain the non-split  short exact sequences
	\begin{equation}\label{ses3}
	\begin{split}
	&0 \rightarrow M_{r, s-1} \oplus M_{r, s+1} \rightarrow M_{1, 2} \boxtimes F_{r, s} \rightarrow M_{r+1, p-s-1} \oplus M_{r+1, p-s+1} \rightarrow 0, \qquad  \\
	&0 \rightarrow M_{r-1, p-s-1} \oplus M_{r-1, p-s+1}  \rightarrow M_{1, 2} \boxtimes F_{r-1, p-s} \rightarrow M_{r, s-1} \oplus M_{r, s+1} \rightarrow 0, \qquad  \\
	&0 \rightarrow M_{1, 2} \boxtimes F_{r, s} \rightarrow P_{r, s-1} \oplus P_{r, s+1}  \rightarrow M_{1, 2} \boxtimes F_{r-1 , p-s}  \rightarrow 0
	\end{split}
	\end{equation}
	From which we get 
	\[
	M_{1, 2} \boxtimes F_{r, s} = F_{r, s-1} \oplus  F_{r, s+1}.
	\]
	In a rigid tensor category projective modules form a tensor ideal. Denote by $\cH$ the functor to the quotient category. It is monoidal.
	We have 
	\begin{equation}\label{ses3}
	\begin{split}
	\cH(M_{1, 2}) \boxtimes \cH(M_{r, s}) &=  \begin{cases}  \cH(M_{r, 2}) & \quad s=1 \\ \cH(M_{r, s-1}) \oplus \cH(M_{r, s+1}) &\quad 1<s<p \end{cases}, \\
	\cH(M_{1, 2}) \boxtimes \cH(F_{r, s}) &=  \begin{cases}  \cH(F_{r, 2}) & \quad s=1 \\ \cH(F_{r, s-1}) \oplus \cH(F_{r, s+1}) &\quad 1<s<p \end{cases}.
	\end{split}
	\end{equation}
	Since the fusion rules $M_{r', s'} \boxtimes M_{r, s}$ are uniquely determined by associativity, commutativity, the simple currents $M_{n+1, 1}$, and the fusion rules of $M_{1, 2} \boxtimes M_{r, s}$, the same must be true for the fusion rules in the quotient by projective modules and hence the same must be true for the fusion rules $M_{r,' s'} \boxtimes F_{r, s}$, i.e. we get 
	\[
	M_{r', s'} \boxtimes F_{r, s} =P^+_{r', s', r, s}  \oplus  \bigoplus_{\substack{ \ell = |s-s'| +1 \\  \ell +s + s' \ \text{odd}}}^{\text{min}\{s+s'-1, 2p-1-s-s' \}} F_{r+r'-1, \ell}
	\]
	for some projective module $P^+_{r', s', r, s}$. Comparing with $M_{r', s'} \boxtimes ( M_{r, s} \oplus M_{r+1, p-s})$ then gives the claim. The argument for the fusion rule $M_{r', s'} \boxtimes \overline{F}_{r, s}$ is essentially the same and we don't repeat it.  
\end{proof}
A few special cases are 
\begin{equation}
\begin{split}
M_{r',1} \boxtimes F_{r, 1} &= F_{r+r'-1, 1} \\
M_{r'-1,p-1} \boxtimes F_{r, 1} &= F_{r+r'-2, p-1} \oplus \bigoplus_{\substack{\ell = 3 \\ \ell \ \text{odd}}}^p  P_{r+r'-1, \ell}\\
M_{r,1} \boxtimes \overline{F}_{r', 1} &= \overline{F}_{r+r'-1, 1} \\
M_{r+1,p-1} \boxtimes \overline{F}_{r, 1} &= \overline{F}_{r+r', p-1} \oplus \bigoplus_{\substack{\ell = 3 \\ \ell \ \text{odd}}}^p  P_{r+r'-1, \ell}
\end{split}
\end{equation}
Using the short exact sequences characterizing $F_{r, 1}$ and $\overline{F}_{r', 1}$ we get the two short exact sequences for $F_{r, 1} \boxtimes \overline{F}_{r', 1}$
\begin{equation}
\begin{split}
E_1 &=   0 \rightarrow M_{r', 1} \boxtimes F_{r, 1} \rightarrow \overline{F}_{r', 1}  \boxtimes  F_{r, 1} \rightarrow M_{r'-1, p-1} \boxtimes F_{r, 1} \rightarrow 0 \\
&= 0 \rightarrow  F_{r+r'-1, 1} \rightarrow \overline{F}_{r', 1}  \boxtimes  F_{r, 1} \rightarrow F_{r+r'-2, p-1} \oplus \bigoplus_{\substack{\ell = 3 \\ \ell \ \text{odd}}}^p  P_{r+r'-1, \ell} \rightarrow 0 \\
E_2 &=   0 \rightarrow \overline{F}_{r', 1} \boxtimes M_{r, 1} \rightarrow \overline{F}_{r', 1}  \boxtimes  F_{r, 1} \rightarrow \overline{F}_{r', 1} \boxtimes M_{r+1, p-1} \rightarrow 0 \\
&= 0 \rightarrow  \overline{F}_{r+r'-1, 1} \rightarrow \overline{F}_{r', 1}  \boxtimes  F_{r, 1} \rightarrow\overline{F}_{r+r', p-1} \oplus \bigoplus_{\substack{\ell = 3 \\ \ell \ \text{odd}}}^p  P_{r+r'-1, \ell} \rightarrow 0 
\end{split}
\end{equation}
It follows that $\overline{F}_{r', 1}  \boxtimes  F_{r, 1}$ equals $\bigoplus\limits_{\substack{\ell = 3 \\ \ell \ \text{odd}}}^p  P_{r+r'-1, \ell}$ plus a summand that has the same composition factors as $F_{r+r'-1, 1} \oplus F_{r+r'-2, p-1}$, that is $M_{r+r'-1, 1}$ with multiplicity two and $M_{r+r'-2, p-1}$ and $M_{r+r', p-1}$ once as composition factors. Moreover this module has to contain both $F_{r+r'-1, 1}$ and  $\overline{F}_{r+r'-1, 1}$ as submodules and both $F_{r+r'-2, p-1}$ and $\overline{F}_{r+r', p-1}$ as quotients. Clearly the only possibility for this is $P_{r+r'-1, 1}$, i.e.
\begin{equation}\label{fus1}
\overline{F}_{r', 1}  \boxtimes  F_{r, 1}  = \bigoplus_{\substack{\ell = 1 \\ \ell \ \text{odd}}}^p  P_{r+r'-1, \ell} .
\end{equation}
Since  $F_{r, s}$ is a summand of $M_{1, s} \boxtimes F_{r, 1}$ and $\overline{F}_{r', s'}$ is a summand of $M_{1, s'} \boxtimes \overline{F}_{r,' 1}$ and  since projective modules form a tensor ideal due to rigidity we have
\begin{corollary}
	The modules $F_{r, s} \boxtimes \overline{F}_{r', s'}$ are projective for all $r, s, r', s'$. 
\end{corollary}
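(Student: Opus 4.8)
The plan is to bootstrap from the single computation already carried out in \eqref{fus1}, which established projectivity of $\overline{F}_{r',1} \boxtimes F_{r,1}$, to the full family of products $F_{r,s} \boxtimes \overline{F}_{r',s'}$. The mechanism is to realize each $F_{r,s}$ and each $\overline{F}_{r',s'}$ as a direct summand of a product involving only the $s=1$ modules, and then to let the tensor-ideal property of projectives do the rest.

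First I would read off from the fusion rules of the preceding Theorem that $F_{r,s}$ occurs as a summand of $M_{1,s} \boxtimes F_{r,1}$. Indeed, specializing the second fusion formula to $r'=1$, $s'=s$ acting on $F_{r,1}$, the summation index $\ell$ runs from $|s-1|+1 = s$ up to $\min\{s, 2p-2-s\}$ subject to $\ell \equiv s \pmod 2$, and for $1 \le s \le p-1$ this range collapses to the single value $\ell = s$, giving
\[
M_{1,s} \boxtimes F_{r,1} = P_{1,s,r,1} \oplus P_{1,s,r+1,p-1} \oplus F_{r,s}.
\]
The boundary case $s=p$ needs no argument, since $F_{r,p} \cong P_{r,p}$ is projective from the outset. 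The symmetric computation with the third fusion formula exhibits $\overline{F}_{r',s'}$ as a summand of $M_{1,s'} \boxtimes \overline{F}_{r',1}$.

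Then I would assemble the product: $F_{r,s} \boxtimes \overline{F}_{r',s'}$ is a direct summand of
\[
(M_{1,s} \boxtimes F_{r,1}) \boxtimes (M_{1,s'} \boxtimes \overline{F}_{r',1}) \cong M_{1,s} \boxtimes M_{1,s'} \boxtimes (\overline{F}_{r',1} \boxtimes F_{r,1}),
\]
where associativity and the braiding have been used to collect the two simple-current factors and to bring $\overline{F}_{r',1} \boxtimes F_{r,1}$ into the form of \eqref{fus1}. That module is projective, and since $\cC^0_{\cM(p)}$ is rigid its projectives form a tensor ideal; hence tensoring with $M_{1,s} \boxtimes M_{1,s'}$ preserves projectivity, and a direct summand of a projective is projective. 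I expect the only delicate point to be the indexing bookkeeping in the first step --- verifying that $F_{r,s}$ is the unique non-projective summand of $M_{1,s} \boxtimes F_{r,1}$ and separately handling $s=p$ --- since all the substantive content already sits in \eqref{fus1}.
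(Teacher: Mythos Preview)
Your proof is correct and follows essentially the same route as the paper's: both arguments observe that $F_{r,s}$ (resp.\ $\overline{F}_{r',s'}$) is a direct summand of $M_{1,s}\boxtimes F_{r,1}$ (resp.\ $M_{1,s'}\boxtimes \overline{F}_{r',1}$), invoke the projectivity of $\overline{F}_{r',1}\boxtimes F_{r,1}$ from \eqref{fus1}, and finish with the tensor-ideal property of projectives in a rigid category. One small terminological slip: $M_{1,s}$ and $M_{1,s'}$ are not simple currents for $s,s'>1$, but your argument only uses associativity and the braiding to rearrange factors, so this does not affect the validity.
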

As an example we have 
\begin{equation}\label{fus2}
\begin{split}
\overline{F}_{r', 1}  \boxtimes  F_{r, 2}  &=\overline{F}_{r', 1}  \boxtimes  (F_{r, 1} \boxtimes M_{1, 2}) = (\overline{F}_{r', 1}  \boxtimes  F_{r, 1}) \boxtimes M_{1, 2} \\
&=  \bigoplus_{\substack{\ell = 1 \\ \ell \ \text{odd}}}^p  P_{r+r'-1, \ell} \boxtimes M_{1, 2} \\
&= P_{r+r', p} \oplus P_{r+r'-2, p} \oplus   \bigoplus_{\substack{\ell = 2 \\ \ell \ \text{even}}}^p  2 P_{r+r'-1, \ell} 
\end{split}
\end{equation}
\begin{corollary}
	The modules $V_{r, s} \boxtimes \overline{V}_{r', s'}$ are projective for all $r, s, r', s'$. 
\end{corollary}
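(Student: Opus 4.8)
The plan is to realize both tensor factors as images of the monoidal induction functor $\cF$ and then reduce the statement to the preceding corollary. Recall from the construction above that $\cF(F_{r, s}) = V_{\bar r, s}$ and $\cF(\overline{F}_{r, s}) = \overline{V}_{\bar r, s}$, where $\bar r \in \{1,2\}$ records the parity of $r$. Hence every triplet module $V_{r,s}$ with $r \in \{1,2\}$ arises as $\cF(F_{r_0, s})$ for any singlet index $r_0$ with $\bar{r_0} = r$, and likewise $\overline{V}_{r', s'} = \cF(\overline{F}_{r'_0, s'})$ for a suitable $r'_0$. First I would fix such preimages $F_{r_0,s}$ and $\overline{F}_{r'_0, s'}$ in $\cC^0_{\cM(p)}$.

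Next I would invoke the monoidality of $\cF$, that is $\cF(M \boxtimes M') \cong \cF(M) \boxtimes \cF(M')$, to compute
\[
V_{r,s} \boxtimes \overline{V}_{r', s'} \cong \cF(F_{r_0, s}) \boxtimes \cF(\overline{F}_{r'_0, s'}) \cong \cF\bigl(F_{r_0, s} \boxtimes \overline{F}_{r'_0, s'}\bigr).
\]
By the previous corollary the singlet module $F_{r_0, s} \boxtimes \overline{F}_{r'_0, s'}$ is projective in $\cC^0_{\cM(p)}$. Since induction sends projective modules to projective modules, the right-hand side is projective in $\cC_{\cW(p)}$, which is exactly the assertion.

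As for the main obstacle: there is essentially none of substance here, as the statement is a formal consequence of the machinery already assembled, namely the monoidality of $\cF$ (Frobenius reciprocity) and the fact that $\cF$ preserves projectivity. The only points requiring mild care are choosing the preimages so as to respect the parity convention encoded in $\bar r$, and justifying that $\cF$ preserves projectives; the latter is already recorded above and follows because $\cF$ is left adjoint to the exact forgetful functor $\cG$, so no genuine difficulty arises.
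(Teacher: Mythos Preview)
Your proof is correct and takes essentially the same approach as the paper: the paper's proof is simply the one-line observation that $\cF$ is monoidal and maps projectives to projectives, which is exactly the argument you have spelled out in detail.
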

\begin{proof}
	The induction functor $\cF$ is monoidal and maps projectives to projectives.
\end{proof}
In particular the fusion rules 
\begin{equation}
\begin{split}
\overline{V}_{r, 1}  \boxtimes  V_{1, 1}  = \bigoplus_{\substack{\ell = 1 \\ \ell \ \text{odd}}}^p  R_{r, \ell} \qquad \text{and} \qquad 
\overline{V}_{r, 1}  \boxtimes  V_{1, 2}  = 2R_{r+1, p} \oplus   \bigoplus_{\substack{\ell = 2 \\ \ell \ \text{even}}}^p  2 R_{r, \ell} 
\end{split}
\end{equation}
hold via induction from \eqref{fus1} and \eqref{fus2}. We thus see that every indecomposable projective object appears in the fusion product of some $V_{r, s} \boxtimes \overline{V}_{r', s'}$. Let $\cV$ be the subcategory of $\cC_{\cW(p)}$ whose objects are direct sums of the modules of type $V_{r, s}$ and $\overline{\cV}$ the subcategory whose modules are direct sums of the $\overline{V}_{r, s}$. Then this observation can be rephrased as
\begin{corollary}\label{cor:projective}
	$\cV \boxtimes \overline{\cV}$ is a projective generator of $\cC_{\cW(p)}$.
\end{corollary}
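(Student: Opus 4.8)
The plan is to combine two facts already established in the excerpt: first, that every indecomposable projective object of $\cC_{\cW(p)}$ arises as a summand of some fusion product $V_{r,s}\boxtimes\overline{V}_{r',s'}$, and second, that all such fusion products are themselves projective (the preceding corollary). Recall that to say an object $G$ is a projective generator means precisely that $G$ is projective and that every indecomposable projective is a direct summand of $G$ (equivalently, $\Hom(G,-)$ is faithful and exact, so every simple appears in the top of $G$). I would verify both halves directly for the object $\cV\boxtimes\overline{\cV}$.

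First I would address projectivity. Since $\cV$ consists of direct sums of the $V_{r,s}$ and $\overline{\cV}$ of direct sums of the $\overline{V}_{r',s'}$, an object of $\cV\boxtimes\overline{\cV}$ is a direct sum of modules $V_{r,s}\boxtimes\overline{V}_{r',s'}$, each of which is projective by the corollary proved just above (using that $\cF$ is monoidal and sends projectives to projectives, inducing the projectivity of $F_{r,s}\boxtimes\overline{F}_{r',s'}$ up to the triplet). A direct sum of projectives is projective, so $\cV\boxtimes\overline{\cV}$ lands in the projective objects of $\cC_{\cW(p)}$.

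Second I would address the generating property. The displayed fusion rules
\[
\overline{V}_{r, 1}\boxtimes V_{1, 1}=\bigoplus_{\substack{\ell=1\\ \ell\ \text{odd}}}^{p}R_{r,\ell},\qquad
\overline{V}_{r, 1}\boxtimes V_{1, 2}=2R_{r+1,p}\oplus\bigoplus_{\substack{\ell=2\\ \ell\ \text{even}}}^{p}2R_{r,\ell}
\]
together exhibit each $R_{r,\ell}$ (for every admissible $r$ and every $\ell\in\{1,\dots,p\}$, separating the odd $\ell$ from the even $\ell$) as a direct summand of some $\overline{V}\boxtimes V$, and the $R_{\bar r,s}$ exhaust the indecomposable projectives of the triplet since $R_{\bar r,s}=\cF(P_{r,s})$ and the $P_{r,s}$ exhaust the indecomposable projectives of the singlet category $\cC^0_{\cM(p)}$. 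Hence every indecomposable projective of $\cC_{\cW(p)}$ is a summand of $\cV\boxtimes\overline{\cV}$. Combined with the projectivity established above, this is exactly the statement that $\cV\boxtimes\overline{\cV}$ is a projective generator, which is the desired conclusion.

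The only genuine subtlety — and the step I would treat most carefully — is bookkeeping in the generation claim: one must check that the two fusion rules really do produce \emph{all} the $R_{r,\ell}$ (both parities of $\ell$, and the boundary case $\ell=p$), and that acting further by the simple currents $M_{n+1,1}$, which send $R_{\bar r,s}\mapsto R_{\overline{r+n},s}$, sweeps out every index $r$. Since the simple currents are invertible and preserve projectivity, tensoring the two displayed products by suitable $M_{n+1,1}$ (equivalently $W_{\,\cdot\,,1}$ after induction) reaches every indecomposable projective; this is routine given the fusion rules already assembled, so I do not anticipate a real obstacle, only the need to confirm the index ranges line up.
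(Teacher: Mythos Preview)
Your proposal is correct and follows essentially the same approach as the paper: the corollary is presented there simply as a rephrasing of the preceding observation that every indecomposable projective $R_{r,\ell}$ appears in some $V_{r,s}\boxtimes\overline{V}_{r',s'}$, together with the earlier corollary that all such products are projective. Your final paragraph about simple currents is unnecessary, since in the triplet category the index $r$ already ranges only over $\{1,2\}$ and the displayed fusion rules (with $r=1,2$) directly exhaust all $R_{r,\ell}$; but this is harmless overcaution, not an error.
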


\subsection{Free field realizations}

Let $\pi$ be the rank one Heisenberg algebra with generator $\alpha(z)$ satisfying 
\[
\alpha(z)\alpha(w) = (z-w)^{-2}.
\]
Let $\pi_\lambda$ be the Fock module of highest-weight $\lambda$ and let $L = \alpha_+  \mathbb Z$. Then the singlet and triplet algebras are subalgebras of $\pi$ and $V_L$ characterized as
\[
\cW(p) = \text{ker}( e_0^{\alpha_-\alpha}: V_L \rightarrow V_{L+\alpha_-}), \qquad \cM(p) = \text{ker}(e_0^{\alpha_-\alpha}: \pi \rightarrow \pi_{\alpha_-}).
\]
Let us denote these embeddings by $\iota$, 
\[
\iota: \cW(p) \hookrightarrow V_L, \qquad \iota: \cM(P) \hookrightarrow \pi. 
\]
Especially the triplet is an extension of the singlet, namely
\begin{equation}
\begin{split}
\cW(p) &= \bigoplus_{\lambda \in L } \text{ker}(e_0^{\alpha_-\alpha}: \pi_\lambda \rightarrow \pi_{\lambda+\alpha_-}) \\
&= \bigoplus_{n \in \mathbb Z} \text{ker}(e_0^{\alpha_-\alpha}: \pi_{\alpha_{2n+1, 1}} \rightarrow \pi_{\alpha_{2n+2, p-1}}) \\
&= \bigoplus_{n \in \mathbb Z}  M_{2n+1, 1}.
\end{split}
\end{equation}
Here we used that
\[
M_{r, s} = \text{ker}(e_0^{\alpha_-\alpha}: \pi_{\alpha_{r, s}} \rightarrow \pi_{\alpha_{r+1, p-s}}).
\]
In fact $\pi_{\alpha_{r, s}}$ satisfies the non-split exact sequence
\[
0 \rightarrow M_{r, s} \rightarrow \pi_{\alpha_{r, s}} \rightarrow M_{r+1, p-s} \rightarrow 0,
\]
that is $\pi_{\alpha_{r, s}} \cong F_{r, s}$ as $\cM(p)$-modules. 
The triplet vertex algebra is strongly generated by fields $W^\pm, W^0$ together with a Virasoro field. 
While the singlet is strongly generated by $W^0$ and the Virasoro field. The field $W^+$ is the field associated to the top level vector of $M_{3, 1}$ in $\pi_{\alpha_{3, 1}}$ and the field associated to $W^-$ is the top level vector of $M_{-1, 1}$ in $\pi_{\alpha_{-1, 1}}$.

These vertex algebras have automorphisms and we can twist these embeddings by an automorphism. 
The full automorphism group of the triplet is $PSL(2, \mathbb C)$ \cite{ALM} and especially there is an involution $\sigma$ corresponding to the non-trivial Weyl reflection. The fields $W^\pm, W^0$ carry the adjoint representation of $PSL(2, \mathbb C)$ and the superscript indicates the weight. In particular we can normalize these strong generators such that $\sigma(W^\pm) = W^\mp$ and $\sigma(W^0) = -W^0$. Note that the Virasoro field is invariant under $\sigma$. 
We see that $\sigma$ restricts to an automorphism of the singlet algebra as well. We denote by $M^\sigma$ the $\sigma$-twisted singlet module corresponding to $M$. Since $\sigma(W^\pm) = W^\mp$ and $W^+$ corresponds to the top level of $M_{3, 1}$ and $W^-$ to the top level of $M_{-1, 1}$ we have $M^\sigma_{3, 1} \cong M_{-1, 1}$.
Let $\iota^\sigma := \sigma\circ\iota$.
We denote by $\pi^\sigma_\lambda$ the Fock-module $\pi_\lambda$ viewed as an $\cM(p)$-module via the embedding $\iota^\sigma$. 
We have 
\[
0 \rightarrow M^\sigma_{r, s} \rightarrow \pi^\sigma_{\alpha_{r, s}} \rightarrow M^\sigma_{r+1, p-s} \rightarrow 0,
\]
and since $\sigma$ leaves the Virasoro subalgebra invariant one necessarily has $M^\sigma_{r, s} \cong M_{r, s}$ as modules for the Virasoro algebra. 
This fixes $M_{r, s}^\sigma \in \{ M_{r, s}, M_{2-r, s}\}$. 
We claim that the second case happens. 
\begin{proposition}
	$M_{r, s}^\sigma \cong M_{2-r, s}$ for all $r, s$. Especially $\pi^\sigma_{\alpha_{r, s}} \cong \overline{F}_{r, s}$.
\end{proposition}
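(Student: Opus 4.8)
\emph{Proof proposal.} The plan is to use that twisting by the involution $\sigma$ is a braided tensor auto-equivalence $T_\sigma$ of $\cC^0_{\cM(p)}$ --- so that it intertwines the fusion product $\boxtimes$ --- and then to propagate the single known value $M^\sigma_{3,1}\cong M_{-1,1}$ to all simple modules by means of the simple-current structure. Two structural facts are already at hand. First, the Virasoro constraint gives $M^\sigma_{r,s}\in\{M_{r,s},M_{2-r,s}\}$; in particular for $r=1$ the two options coincide, so $M^\sigma_{1,s}\cong M_{1,s}$ with no further argument. Second, the free-field identification of the generating fields $W^{\pm}$ supplies the one nontrivial base value $M^\sigma_{3,1}\cong M_{-1,1}$.

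Next I would pin down $T_\sigma$ on the invertible objects. The simple currents $\{M_{n+1,1}\}_{n\in\mathbb{Z}}$ form a group isomorphic to $\mathbb{Z}$ under $\boxtimes$, generated by $M_{2,1}$, with $M_{r,1}\cong M_{2,1}^{\boxtimes(r-1)}$. Being a tensor auto-equivalence, $T_\sigma$ preserves invertibility and therefore restricts to a group automorphism of this copy of $\mathbb{Z}$; since $\sigma^2=\id$ this automorphism has order at most two and hence equals $\pm\id$. The case $+\id$ is excluded by $M^\sigma_{3,1}\cong M_{-1,1}\ne M_{3,1}$, so $T_\sigma$ acts by inversion on the simple currents, i.e.
\[
M^\sigma_{r,1}\cong M_{2-r,1}\qquad\text{for all } r.
\]

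I would then combine the two computations. Every simple module factors as a simple-current shift, $M_{r,s}\cong M_{r,1}\boxtimes M_{1,s}$, so applying monoidality of $T_\sigma$ together with $M^\sigma_{1,s}\cong M_{1,s}$ gives
\[
M^\sigma_{r,s}\cong M^\sigma_{r,1}\boxtimes M^\sigma_{1,s}\cong M_{2-r,1}\boxtimes M_{1,s}\cong M_{2-r,s},
\]
which is the first assertion. The statement about Fock modules follows by applying the exact functor $T_\sigma$ to the non-split sequence $0\to M_{r,s}\to\pi_{\alpha_{r,s}}\to M_{r+1,p-s}\to 0$ (recall $\pi_{\alpha_{r,s}}\cong F_{r,s}$): using the first part, the twisted sequence becomes $0\to M_{2-r,s}\to\pi^\sigma_{\alpha_{r,s}}\to M_{1-r,p-s}\to 0$, and since it is again non-split (an equivalence preserves non-splitness) it is precisely the defining sequence of $\overline{F}_{2-r,s}$, identifying $\pi^\sigma_{\alpha_{r,s}}$ with it.

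The step I expect to be the main obstacle is the very first input: verifying that twisting by the VOA automorphism $\sigma$ really does furnish a braided tensor auto-equivalence of $\cC^0_{\cM(p)}$, so that it commutes with $\boxtimes$; this is the one genuinely nontrivial ingredient, and everything afterward is bookkeeping with simple currents. Should one wish to avoid invoking this structural fact, the alternative is to compute the twisted action directly from the free-field/screening-operator description, which is elementary but considerably more laborious.
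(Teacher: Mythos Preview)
Your argument is correct and takes a genuinely different route from the paper. You invoke that the twist functor $T_\sigma$ is a braided tensor auto-equivalence --- a standard fact for VOA automorphisms, though one should also check it preserves the subcategory $\cC^0_{\cM(p)}$ --- then read off its action on the Picard group $\{M_{n+1,1}\}\cong\mathbb{Z}$ (forced to be inversion by $\sigma^2=\id$ and the base value $M^\sigma_{3,1}\cong M_{-1,1}$) and propagate via $M_{r,s}\cong M_{r,1}\boxtimes M_{1,s}$. The paper instead avoids invoking monoidality of $T_\sigma$ altogether: citing the method of \cite[Prop.~3.2.4]{CMY}, it establishes the fusion rule $M_{3,1}\boxtimes\pi^\sigma_{\alpha_{r,s}}\cong\pi^\sigma_{\alpha_{r-2,s}}$ by a direct free-field computation, then tensors the exact sequence for $\pi^\sigma_{\alpha_{r,s}}$ with $M_{3,1}$ (using rigidity) to obtain $M_{3,1}\boxtimes M^\sigma_{r,s}\cong M^\sigma_{r-2,s}$, and finally combines the Virasoro dichotomy $M^\sigma_{r,s}\in\{M_{r,s},M_{2-r,s}\}$ with the known simple-current fusion $M_{3,1}\boxtimes M_{r,s}\cong M_{r+2,s}$ to force the second option. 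Your approach is cleaner and more structural; the paper's buys independence from the tensor-functor property of $T_\sigma$, relying only on a single explicit fusion computation already available in the literature.

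One small remark: your deduction $\pi^\sigma_{\alpha_{r,s}}\cong\overline{F}_{2-r,s}$ is the correct consequence of the first assertion (compare socles: $M^\sigma_{r,s}\cong M_{2-r,s}$, not $M_{r,s}$). The index in the paper's ``Especially'' clause appears to be off by the shift $r\mapsto 2-r$, which is harmless for the subsequent applications since those only use the identification up to even shifts in $r$.
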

\begin{proof}
	For this we note that some fusion rules of the singlet algebra have been computed using only the free field realization in section 3.2 of \cite{CMY}. The exact same argument as the one of the proof of Proposition 3.2.4 of \cite{CMY} applies to get
	\[
	M_{3, 1} \boxtimes \pi^\sigma_{\alpha_{r, s}} \cong M_{-1, 1}^\sigma \boxtimes \pi^\sigma_{\alpha_{r, s}} \cong \pi^\sigma_{\alpha_{r-2, s}}.
	\]
	Using rigidity we thus get by tensoring the non-split short exact sequence for $\pi^\sigma_{\alpha_{r, s}}$  with $M_{3, 1}$ the non-split exact sequence
	\[
	0 \rightarrow M_{3, 1} \boxtimes M^\sigma_{r, s} \rightarrow \pi^\sigma_{\alpha_{r-2, s}} \rightarrow M_{3, 1} \boxtimes M^\sigma_{r+1, p-s} \rightarrow 0,
	\]
	and hence $M_{3, 1} \boxtimes M^\sigma_{r, s} \cong M_{r-2 s}^\sigma \in \{ M_{r-2, s}, M_{4-r, s}\}$ but $M_{3, 1} \boxtimes M_{r, s} \cong M_{r+2, s}$ and 
	$M_{3, 1} \boxtimes M_{2-r, s} \cong M_{4-r, s}$ so that the only possibility is the claim $M_{r-2, s}^\sigma \cong M_{4-r, s}$.  
\end{proof}
Let $V_{\alpha_{r, s}+L}$ be the $V_L$-module corresponding to the coset $L+\alpha_{r, s}$ of $L$. It becomes an $\cW(p)$-module via the embeddings $\iota$ and $\iota^\sigma$ and we denote these by $V_{\alpha_{r, s}+L}$ and $V^\sigma_{\alpha_{r, s}+L}$ respectively. By Frobenius reciprocity we have 
\[
\text{Hom}_{\cC_{\cW(p)}}(V_{r, s} , V_{\alpha_{r, s}+L}) \cong  \text{Hom}_{\cC_{\cW(p)}}(\cF(F_{r, s}) , V_{\alpha_{r, s}+L}) 
=   \text{Hom}_{\text{Ind}(\cC^0_{\cM(p)})}(F_{r, s}, \bigoplus_{n \in \mathbb Z}  F_{r+2n, s} ) = \mathbb C
\] 
and hence $V_{r, s} \cong V^\sigma_{\alpha_{r, s}+L}$. The same argument for $\overline{V}_{r, s}$ and $V^\sigma_{\alpha_{r, s}+L}$ gives $\overline{V}_{r, s} \cong V^\sigma_{\alpha_{r, s}+L}$.
The embeddings $\iota$ and $\iota^\sigma$ provide $V_L$ with the structure of a commuative algebra in $\cC_{\cW(p)}$ and hence there are induction functors from $\cC_{\cW(p)}$ to the category of $V_L$-modules that lie in $\cC_{\cW(p)}$. This category is larger than the category $\cC_{V_L}$ of vertex algebra modules for $V_L$, but it contains $\cC_{V_L}$ as tensor catgory. Induction is monoidal and so we have oplax tensor functors from $\cC_{V_L}$ that we denote by $\cG$ and $\cG^\sigma$ that map the lattice vertex algebra module $V_{L+\alpha_{r, s}}$ to $V_{r, s}$ respectively $\overline{V}_{r, s}$, i.e. we have
\begin{proposition}\label{prop:oplax}
	There are surjective oplax tensor functors $\cG : \cC_{V_L} \rightarrow \cV$ and $\cG^\sigma : \cC_{V_L} \rightarrow \overline{\cV}$.
\end{proposition}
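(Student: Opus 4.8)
The plan is to produce $\cG$ and $\cG^\sigma$ as the two restriction functors attached to the vertex-algebra embeddings $\iota,\iota^\sigma:\cW(p)\hookrightarrow V_L$, and then to read off their images and surjectivity from the coset identifications already obtained above. First I would record that $V_L$, restricted through $\iota$, is the finite-length $\cW(p)$-module $V_{1,1}$ (the coset $\alpha_{1,1}=0$) and hence an honest object of $\cC_{\cW(p)}$; since $V_L$ is a conformal vertex-algebra extension of $\cW(p)$, the extension theory of \cite{CKM, CMY2} equips it with the structure of a commutative algebra object in $\cC_{\cW(p)}$. Precomposing with the automorphism $\sigma$ produces a second commutative algebra structure on the same underlying object via $\iota^\sigma$. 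This is the step that carries the real weight, since the remaining constructions are formal once the two algebra structures are in hand.

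Next I would invoke the induction--restriction adjunction for each of these algebras. The induction functor $X\mapsto V_L\boxtimes X$ is a \emph{strong} monoidal functor $\cC_{\cW(p)}\to\mathrm{Rep}_{V_L}(\cC_{\cW(p)})$ into the category of $V_L$-modules internal to $\cC_{\cW(p)}$, equipped with the relative tensor product $\boxtimes_{V_L}$; its right adjoint is the forgetful functor, which therefore carries a canonical oplax tensor structure whose structure morphisms are the quotient maps $\cG(M)\boxtimes\cG(N)\to\cG(M\boxtimes_{V_L}N)$. As noted just above, the genuine lattice-module category $\cC_{V_L}$ (the local modules) embeds as a tensor subcategory of $\mathrm{Rep}_{V_L}(\cC_{\cW(p)})$: it is strictly smaller but closed under $\boxtimes_{V_L}$. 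Restricting the forgetful functor along this inclusion yields the oplax tensor functor $\cG:\cC_{V_L}\to\cC_{\cW(p)}$, and repeating the construction with $\iota^\sigma$ gives $\cG^\sigma$.

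It then remains to pin down the images. Every object of $\cC_{V_L}$ is a direct sum of the $2p$ coset modules $V_{L+\alpha_{r,s}}$, and the Frobenius-reciprocity computations preceding the statement identify $V_{L+\alpha_{r,s}}$ through $\iota$ with $V_{r,s}$ and through $\iota^\sigma$ with $V^\sigma_{L+\alpha_{r,s}}\cong\overline{V}_{r,s}$. Since a right adjoint preserves direct sums, $\cG$ lands in the subcategory $\cV$ of direct sums of the $V_{r,s}$ and $\cG^\sigma$ in the subcategory $\overline{\cV}$ of direct sums of the $\overline{V}_{r,s}$; as each generator is hit by a single coset module, both functors are essentially surjective onto their targets, which is the asserted surjectivity. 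Together with Corollary \ref{cor:projective}, this supplies the two halves of Theorem \ref{thm:oplax}.

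I expect the main obstacle to lie entirely in the first paragraph: verifying at the level of intertwining operators that $\iota$ and $\iota^\sigma$ genuinely make $V_L$ a commutative algebra in the braided tensor category $\cC_{\cW(p)}$, and that $\cC_{V_L}$ really does sit inside $\mathrm{Rep}_{V_L}(\cC_{\cW(p)})$ as a braided tensor subcategory compatible with the vertex-tensor structure of \cite{HLZ}. Once these vertex-algebraic inputs are secured, the oplaxness of $\cG,\cG^\sigma$ and the identification of their images are purely formal consequences of the adjunction and of the fusion computations already carried out.
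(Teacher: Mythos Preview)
Your proposal is correct and follows essentially the same route as the paper: both arguments use the embeddings $\iota,\iota^\sigma$ to endow $V_L$ with commutative algebra structures in $\cC_{\cW(p)}$, take the right adjoint of the (monoidal) induction functor to obtain oplax tensor functors, restrict to the genuine lattice category $\cC_{V_L}\subset\mathrm{Rep}_{V_L}(\cC_{\cW(p)})$, and identify the images with $V_{r,s}$ and $\overline{V}_{r,s}$ via the coset computations already carried out. Your write-up is in fact more explicit about the categorical formalism (naming the oplax structure maps and spelling out the adjunction) than the paper's one-paragraph treatment, but the content is the same.
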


\section{Categorical Setup}

\subsection{Module Coalgebras}
We include here the fundamental definitions and results on module coalgebras required in the following subsection. The content of this subsection can be found in greater detail in \cite[Subsection 4.2]{BCPO}. Throughout, we denote the coassociator, coproduct, and unit of a quasi-bialgebra $H$ by $\Phi_H$, $\Delta_H$, and $\epsilon_H$.

\begin{definition}
Let $H$ be a quasi-bialgebra and $C$ a left $H$-module. We call $C$ a left $H$-module coalgebra if it is equipped with morphisms
\[ \delta:C \to C \otimes C, \qquad \varepsilon:C \to \mathbb{C} \]
subject to the following conditions:
\begin{align}
\Phi_H \cdot (\delta \otimes \mathrm{Id}_C) \circ \delta(c)&=(\mathrm{Id}_C \otimes \delta) \circ \delta(c) \\
\label{Hcou}\varepsilon(c_1)c_2&=\varepsilon(c_2)c_1=c \\
\label{Hcop}\delta(h \cdot c)&=\Delta_H(h) \cdot \delta(c),\\
\varepsilon(h \cdot c)&=\epsilon_H(h)\varepsilon(c),
\end{align}
for all $c \in C$ and $h \in H$, where we are using Sweedler's notation $\delta(c)=c_1 \otimes c_2$. Right $H$-module coalgebras are defined similarly. 
\end{definition}

We also consider coalgebra objects in categories of bimodules over two distinct Hopf algebras.
\begin{definition}
Let $H,G$ be Hopf algebras and $C$ a $H$-$G$ bimodule. We call $C$ a $H$-$G$ bimodule coalgebra if is equipped with morphisms
\[ \delta:C \to C \otimes C, \qquad \varepsilon:C \to \mathbb{C} \]
satisfying
\begin{align}
\label{HGcoass}\Phi_H \cdot (( \delta \otimes \mathrm{Id}_C) \circ \delta(c) ) \cdot \Phi_G^{-1}&=(\mathrm{Id}_C \otimes \delta) \circ \delta(c)\\
\delta(h \cdot c)=\Delta_H(h) \cdot \delta(c), \; & \; \delta(c \cdot g)=\delta(c) \cdot \Delta_G(g)\\
\varepsilon(c_1)c_2&=\varepsilon(c_2)c_1=c\\
\varepsilon(h \cdot c)=\epsilon_H(h)\varepsilon(c), \; & \; \varepsilon(c \cdot g)=\varepsilon(c)\epsilon_G(g)
\end{align}
\end{definition}
We have the following useful result (\cite[Proposition 4.17]{BCPO}):
\begin{proposition}\label{Gaugeprop}
Let $H$ be a quasi-bialgebra, $F \in H \otimes H$ a gauge transformation on $H$, and $(C,\delta,\varepsilon)$ a left $H$-module coalgebra. If we define $\delta_F(c)=F\delta(c)$ for all $c \in C$, then $(C,\delta_F,\varepsilon)$ is an $H^F$-module coalgebra, where $H^F$ is the $F$-twist of $H$.
\end{proposition}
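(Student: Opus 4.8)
The plan is to verify the four defining conditions of an $H^F$-module coalgebra for the triple $(C,\delta_F,\varepsilon)$ one at a time, keeping $\varepsilon$ unchanged and recalling that $H^F$ has the same algebra and counit as $H$, twisted coproduct $\Delta_{H^F}(h)=F\Delta_H(h)F^{-1}$, and twisted coassociator
\[
\Phi_{H^F}=(1\otimes F)(\mathrm{Id}\otimes\Delta_H)(F)\,\Phi_H\,(\Delta_H\otimes\mathrm{Id})(F^{-1})(F^{-1}\otimes 1).
\]
Two of the conditions are essentially immediate. For the module-map condition I would compute $\delta_F(h\cdot c)=F\delta(h\cdot c)=F\Delta_H(h)\delta(c)$ using \eqref{Hcop}, and compare with $\Delta_{H^F}(h)\cdot\delta_F(c)=F\Delta_H(h)F^{-1}F\delta(c)$, the two agreeing after $F^{-1}F$ cancels. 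The compatibility of $\varepsilon$ with the action is literally unchanged, since $\epsilon_{H^F}=\epsilon_H$ and $\varepsilon$ is the same map.

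For the counit condition the key input is the normalization $(\epsilon_H\otimes\mathrm{Id})(F)=(\mathrm{Id}\otimes\epsilon_H)(F)=1$ of the gauge transformation. Writing $F=F^{(1)}\otimes F^{(2)}$ and $\delta(c)=c_1\otimes c_2$, so that $\delta_F(c)=F^{(1)}c_1\otimes F^{(2)}c_2$, I would apply $\varepsilon$ in the first leg and use the counit-action compatibility together with \eqref{Hcou} to obtain $\varepsilon(F^{(1)}c_1)\,F^{(2)}c_2=\epsilon_H(F^{(1)})F^{(2)}\cdot(\varepsilon(c_1)c_2)=1\cdot c=c$; the other equality is symmetric, using the second normalization.

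The one substantive computation, and the expected main obstacle, is quasi-coassociativity. Writing $\delta^{(2)}_L=(\delta\otimes\mathrm{Id})\circ\delta$ and $\delta^{(2)}_R=(\mathrm{Id}\otimes\delta)\circ\delta$, so that the coassociativity axiom for $\delta$ reads $\Phi_H\cdot\delta^{(2)}_L(c)=\delta^{(2)}_R(c)$, I would expand the two iterated applications of $\delta_F$ by repeatedly inserting $\delta_F=F\cdot\delta$ and pushing the inner $F$ through $\delta$ via \eqref{Hcop}. This yields
\[
(\delta_F\otimes\mathrm{Id})\circ\delta_F(c)=(F\otimes 1)(\Delta_H\otimes\mathrm{Id})(F)\cdot\delta^{(2)}_L(c),
\]
\[
(\mathrm{Id}\otimes\delta_F)\circ\delta_F(c)=(1\otimes F)(\mathrm{Id}\otimes\Delta_H)(F)\cdot\delta^{(2)}_R(c),
\]
expressing both iterated coproducts as explicit elements of $H^{\otimes 3}$ acting on the \emph{untwisted} $\delta^{(2)}_L(c)$ and $\delta^{(2)}_R(c)$. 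Substituting the first identity together with the formula for $\Phi_{H^F}$ into $\Phi_{H^F}\cdot(\delta_F\otimes\mathrm{Id})\circ\delta_F(c)$, the factors $(F^{-1}\otimes 1)(F\otimes 1)$ and $(\Delta_H\otimes\mathrm{Id})(F^{-1})(\Delta_H\otimes\mathrm{Id})(F)$ both collapse to the identity, leaving $(1\otimes F)(\mathrm{Id}\otimes\Delta_H)(F)\,\Phi_H\cdot\delta^{(2)}_L(c)$. Applying the coassociativity axiom for $\delta$ then replaces $\Phi_H\cdot\delta^{(2)}_L(c)$ by $\delta^{(2)}_R(c)$, reproducing exactly the right-hand side of the second identity. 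The delicate points are the bookkeeping of the tensor-leg placement of the components of $F$ when iterating $\delta_F$, and matching the convention for $\Phi_{H^F}$; beyond the three cancellations isolated above, no genuinely new idea is required.
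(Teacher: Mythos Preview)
Your proof is correct and is the standard direct verification of the four axioms; the paper itself does not give a proof of this proposition but simply cites \cite[Proposition 4.17]{BCPO}, where the same argument appears.
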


\subsection{Categorical Tools} \label{catsec}
We proceed first by deriving some general results on quasi Hopf algebras which will be necessary for the following sections.

\begin{proposition}\label{prop1}
	Let $\mathcal{C}$ be a finite tensor category with integral Frobenius-Perron dimensions and $U$ an algebra such that we have an abelian equivalence $\cC \cong \mathrm{Rep}(U)$ identifying Frobenius-Perron dimensions in $\cC$ with vector space dimensions in $\mathrm{Rep}(U)$. Then, $U$ can be given the structure of a quasi bialgebra such that $\mathcal{C}$ and $\mathrm{Rep}(U)$ are tensor equivalent.
\end{proposition}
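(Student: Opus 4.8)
The plan is to realize $U$ as the algebra of natural endomorphisms of a quasi-fiber functor on $\cC$ and to transport the monoidal data of $\cC$ onto $U$ by Tannakian reconstruction. Write $F\colon \cC \to \mathrm{Vec}$ for the composite of the given abelian equivalence $\cC \cong \Rep(U)$ with the forgetful functor $\Rep(U) \to \mathrm{Vec}$. By hypothesis $F$ is exact and faithful and satisfies $\dim F(X) = \mathrm{FPdim}(X)$ for every object $X$, and the standard identification of $U$ with the natural endotransformations of the forgetful functor gives $U \cong \End(F)$ as algebras. It therefore suffices to equip $F$ with the structure of a quasi-fiber functor, that is a tensor structure $J$ together with a unit isomorphism, \emph{not} required to be coherent with associativity constraints, and then to run the reconstruction machinery.

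The decisive step is to produce natural isomorphisms $J_{X,Y}\colon F(X)\otimes F(Y) \xrightarrow{\ \sim\ } F(X\otimes Y)$. I would do this in one stroke on the Deligne product $\cC\boxtimes\cC \cong \Rep(U\otimes U)$ by comparing the two functors $(X,Y)\mapsto F(X)\otimes F(Y)$ and $(X,Y)\mapsto F(X\otimes Y)$ into $\mathrm{Vec}$. Both are exact, colimit-preserving and faithful: the first is the forgetful functor of $U\otimes U$, and the second is exact because $\otimes_\cC$ is biexact and $F$ is exact. By a standard Eilenberg--Watts representability argument each is of the form $M\otimes_{U\otimes U}(-)$ for a projective right $U\otimes U$-module $M$, and over the algebraically closed ground field such a functor is determined up to natural isomorphism by its dimensions on the simple objects $S_i\boxtimes S_j$ (these dimensions are exactly the multiplicities of the indecomposable projectives in $M$). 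Now multiplicativity of the Frobenius--Perron dimension gives $\dim F(S_i\otimes S_j)=\mathrm{FPdim}(S_i)\,\mathrm{FPdim}(S_j)=\dim F(S_i)\,\dim F(S_j)$, so the two functors agree on all simples and are naturally isomorphic; this joint naturality on $\cC\boxtimes\cC$ is precisely naturality of $J$ in both variables. The unit isomorphism $F(\mathbf 1)\cong\C$ is immediate from $\mathrm{FPdim}(\mathbf 1)=1$ and simplicity of the unit.

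With the quasi-fiber functor $(F,J)$ in hand, I would define the coproduct $\Delta\colon U \to U\otimes U \cong \End\big(F(-)\otimes F(-)\big)$ as the transport of $\eta\in\End(F)$ through $J$, the counit as the action on $F(\mathbf 1)\cong\C$, and the coassociator $\Phi\in U^{\otimes 3}\cong\End\big(F(-)\otimes F(-)\otimes F(-)\big)$ as the defect measuring the failure of $J$ to intertwine the associativity constraint $a_{X,Y,Z}$ of $\cC$ with the associator of $\mathrm{Vec}$. The quasi-bialgebra axioms then follow formally: coassociativity up to conjugation by $\Phi$ is the very definition of $\Phi$, the pentagon identity for $\Phi$ is the image of the pentagon axiom for $a$ in $\cC$, and the counit identities come from the unit constraints. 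By construction $\Rep(U)$ with this structure has tensor product computed by $J$ and associativity by $\Phi$, so the given abelian equivalence upgrades to a tensor equivalence $\cC \simeq \Rep(U)$.

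The main obstacle is the construction of $J$ in the second paragraph: one must ensure that two exact functors to $\mathrm{Vec}$ on the finite abelian category $\cC\boxtimes\cC$ agreeing dimension-wise on all simples are \emph{naturally} isomorphic, not merely objectwise isomorphic, and this is exactly where integrality and multiplicativity of the Frobenius--Perron dimension, together with representability of exact functors by projective generators, do the real work. The key conceptual point is that $J$ is permitted to be incoherent, so no compatibility with associativity is checked at this stage; all such incoherence is absorbed into $\Phi$ during reconstruction, which is precisely what produces a genuine quasi-bialgebra rather than an honest bialgebra. The remaining verifications in the third paragraph are standard but bookkeeping-heavy.
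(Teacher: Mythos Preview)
Your proof is correct and follows essentially the same route as the paper's. The only organizational difference is that the paper first invokes the abstract existence of \emph{some} quasi-fiber functor on $\cC$ (citing \cite[Theorem~5.3.12]{EGNO}) to obtain a quasi-bialgebra $\tilde U$, and then argues $\tilde U\cong U$ as algebras because the underlying algebra of $\End(F)$ is determined by the abelian category together with the dimension function; you instead fix $F$ to be the given forgetful functor from the start (so that $\End(F)=U$ on the nose) and construct the quasi-tensor structure $J$ on this particular $F$ by the Eilenberg--Watts/dimension-matching argument, which is exactly the content of the EGNO result the paper cites. Both arguments rest on the same lemma---two exact functors from a finite abelian category over an algebraically closed field to $\mathrm{Vec}$ agreeing on dimensions of simples are naturally isomorphic---so there is no substantive difference.
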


\begin{proof}
	Since $\cC$ is a finite tensor category with integral Frobenius-Perron dimensions, there exists a quasi-fiber functor $F:\cC \to \mathrm{Vec}$ and a quasi bialgebra $\tilde{U}:=\mathrm{End}(F)$ such that $\cC \cong \mathrm{Rep}(\tilde{U})$ as tensor categories. It remains only to show that $\tilde{U} \cong U$ as algebras. However, we have an abelian isomorphism $\mathrm{Rep}(U) \cong \mathrm{Rep}(\tilde{U})$ identifying vector space dimensions with Frobenius-Perron dimensions and it follows from the proof of the reconstruction theorem for finite-dimensional quasi bialgebras \cite[Theorem 5.3.12]{EGNO} that the algebra structure of $\tilde{U}$ is uniquely determined by the abelian structure of $\mathrm{Rep}(\tilde{U})$ and Frobenius-Perron dimensions (i.e. the associated quasi-fiber functor).
\end{proof}

\begin{proposition}\label{prop2} 
	Let $\cC, \cD$ be a finite tensor categories such that there exists a quasi-fiber functor $F_{\cD}:\cD \to \mathrm{Vec}$ and an essentially surjective quasi-tensor functor $G:\cC \to \cD$. Then we have an inclusion $C \subset U$ of the associated quasi bialgebra $U=\mathrm{End}(F_{\cD} \circ G)$, $C=\mathrm{End}(F_{\cD})$ preserving the coproduct, counit, and algebra structure. 
\end{proposition}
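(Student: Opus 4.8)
The plan is to realize the inclusion concretely as \emph{whiskering} of natural endomorphisms of the fiber functor with $G$, and then to verify the three structures (algebra, coproduct, counit) one at a time while deliberately ignoring the coassociator. First I would set up the ambient reconstruction data: since $F_\cD$ is a quasi-fiber functor and $G$ is an exact, faithful quasi-tensor functor, the composite $F_\cD \circ G : \cC \to \Vect$ is again a quasi-fiber functor (exact and faithful, with quasi-tensor structure obtained by composing those of $G$ and $F_\cD$). Hence by the reconstruction theorem used in Proposition \ref{prop1}, $U = \End(F_\cD \circ G)$ is a quasi bialgebra with $\cC \cong \Rep(U)$, while $C = \End(F_\cD)$ is a quasi bialgebra with $\cD \cong \Rep(C)$. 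Writing a general element of $C$ as a natural family $\eta = (\eta_X)_{X \in \cD}$ with $\eta_X \in \End(F_\cD(X))$, I would define $\kappa : C \to U$ by whiskering, $\kappa(\eta) = (\eta_{G(c)})_{c \in \cC}$; this is well defined because naturality of $\eta$ in $\cD$ restricts along $G$ to naturality in $\cC$.

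Next I would dispatch the three structures that are claimed to be preserved, the first two being essentially formal. For injectivity, if $\kappa(\eta) = 0$ then $\eta_{G(c)} = 0$ for all $c$; essential surjectivity of $G$ provides for each $X \in \cD$ an isomorphism $\phi : G(c) \xrightarrow{\sim} X$, and naturality gives $\eta_X = F_\cD(\phi) \circ \eta_{G(c)} \circ F_\cD(\phi)^{-1} = 0$, so $\eta = 0$. The algebra structure on the endomorphism algebra of a functor is composition of natural transformations, and whiskering manifestly preserves composition and identities, so $\kappa$ is an algebra homomorphism. For the counit, $\epsilon_C(\eta) = \eta_{\unit_\cD}$ and $\epsilon_U(\kappa(\eta)) = \eta_{G(\unit_\cC)}$; using that $G$ preserves the unit up to a canonical isomorphism $G(\unit_\cC) \cong \unit_\cD$ and naturality of $\eta$ along it, these agree, so $\epsilon_U \circ \kappa = \epsilon_C$.

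The real content is coproduct-compatibility, and this is the step I expect to be the main obstacle, since it forces one to carry both quasi-tensor structures simultaneously. Let $J_{X,Y} : F_\cD(X) \otimes F_\cD(Y) \xrightarrow{\sim} F_\cD(X \otimes Y)$ be the quasi-tensor structure of $F_\cD$ and $G_2(c,c') : G(c) \otimes G(c') \xrightarrow{\sim} G(c \otimes c')$ that of $G$, so the quasi-tensor structure of $F_\cD \circ G$ is $\tilde{J}_{c,c'} = F_\cD(G_2(c,c')) \circ J_{G(c),G(c')}$. By the definition of the reconstructed coproduct, $\Delta_C(\eta)$ is the unique element of $C \otimes C$ acting on $F_\cD(X) \otimes F_\cD(Y)$ by $J_{X,Y}^{-1} \circ \eta_{X \otimes Y} \circ J_{X,Y}$, and $\Delta_U(\kappa(\eta))$ acts on $F_\cD(G(c)) \otimes F_\cD(G(c'))$ by $\tilde{J}_{c,c'}^{-1} \circ \eta_{G(c \otimes c')} \circ \tilde{J}_{c,c'}$. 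The key move is to apply naturality of $\eta$ to the morphism $G_2(c,c')$, which yields $F_\cD(G_2(c,c'))^{-1} \circ \eta_{G(c \otimes c')} \circ F_\cD(G_2(c,c')) = \eta_{G(c) \otimes G(c')}$; absorbing $G_2$ this way collapses the action of $\Delta_U(\kappa(\eta))$ to $J_{G(c),G(c')}^{-1} \circ \eta_{G(c)\otimes G(c')} \circ J_{G(c),G(c')}$, which is exactly the action of $(\kappa \otimes \kappa)(\Delta_C(\eta))$ on the same space. Since $F_\cD \circ G$ is faithful and $G$ is essentially surjective, an element of $U \otimes U$ is pinned down by its actions on all $F_\cD(G(c)) \otimes F_\cD(G(c'))$, so $\Delta_U \circ \kappa = (\kappa \otimes \kappa) \circ \Delta_C$.

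I would close by emphasizing what is \emph{not} asserted: the coassociators need not agree, and indeed $\Phi_U$ and $\Phi_C$ generically differ precisely because whiskering only intertwines $\Delta$ \emph{after} the structure morphism $G_2$ has been absorbed via naturality, whereas the associativity constraint sees the noncoherent part of $G_2$ and $J$. Reconciling $\Phi_U$ with $\Phi_C$ is deferred to Theorem \ref{Catthm}, where a suitable gauge twist is produced. Thus $\kappa : C \hookrightarrow U$ is the desired inclusion of quasi bialgebras preserving the algebra structure, coproduct, and counit.
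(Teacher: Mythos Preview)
Your proof is correct and takes a route that is dual to the paper's. The paper works on the coend side: it writes $\mathrm{Coend}(F_\cD)$ and $\mathrm{Coend}(F_\cD\circ G)$ explicitly as quotients $\bigoplus F(X)^*\otimes F(X)/E$, observes that $E_\cC\subset E_\cD$ and that essential surjectivity of $G$ makes the summands match, and obtains a surjection $\mathrm{Coend}(F_\cD\circ G)\twoheadrightarrow\mathrm{Coend}(F_\cD)$ which it then dualizes to the desired inclusion $C\hookrightarrow U$; coproduct and counit preservation are dispatched in one sentence by citing that these are determined by the action on $F(X\otimes Y)$ and $F(\unit)$. You instead stay on the end side and build the map directly as whiskering $\eta\mapsto \eta_{G(-)}$, then check each structure by hand. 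Your approach buys an explicit, self-contained verification of the coproduct step---in particular your use of naturality of $\eta$ along $G_2(c,c')$ to absorb the extra factor in $\tilde J$ is exactly the computation the paper's one-line citation is hiding. The paper's approach is more structural and makes the injectivity completely formal (dual of a surjection), whereas you need the small naturality argument with an isomorphism $G(c)\cong X$. The two constructions of the inclusion are the same map, just seen from opposite ends of the $\mathrm{End}/\mathrm{Coend}$ duality.
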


\begin{proof}
	We have \cite[Subsection 1.10]{EGNO}
	\begin{align*}
	\mathrm{Coend}(F_{\cD})&= \bigoplus\limits_{Y \in \cD} F_{\cD}(Y)^* \otimes F_{\cD}(Y)/E_{\cD}\\
	\mathrm{Coend}(F_{\cD} \circ G)&= \bigoplus\limits_{X \in \cC} F_{\cD}(G(X))^* \otimes F_{\cD}(G(X))/E_{\cC}
	\end{align*}
	where $E_{\cD}$ is spanned by the elements of the form $y_* \otimes F_{\cD}(f)x-F_{\cD}(f)^*y_*\otimes x$ for $x \in F_{\cD}(X)$, $y_* \in F_{\cD}(Y)^*$, $f \in \mathrm{Hom}_{\cD}(X,Y)$ and $E_{\cC}$ is spanned by elements of the form $y_* \otimes F_{\cD}(G(g))^*y_* \otimes x$ for $x \in F_{\cD}(G(X))$, $y_* \in F_{\cD}(G(Y))^*$, $g \in \mathrm{Hom}_{\cC}(X,Y)$. It is clear that $E_{\cC} \subset E_{\cD}$ and $G$ is essentially surjective, so we have a canonical surjection 
	\[ \mathrm{Coend}(F_{\cD} \circ G) \twoheadrightarrow \mathrm{Coend}(F_{\cD})\]
	which yields an inclusion
	\[ \mathrm{End}(F_{\cD}) \hookrightarrow \mathrm{End}(F_{\cD} \circ G)\]
	For a finite tensor $\cB$ and quasi-fiber functor $F:\cB \to \mathrm{Vec}$, the coproduct and counit on an element $a$ of $\mathrm{End}(F)$ is defined uniquely by the action of $a$ on $F(X \otimes Y)$ and $F(1)$ viewed as modules for $\mathrm{End}(F)$ for any $X,Y \in \cB$ \cite[Theorem 5.2.1]{EGNO} so the inclusion preserves the coproduct and counit. 
\end{proof}

\begin{proposition} \label{bimod}~
	\begin{itemize}
		\item[a)] Let $U$ be a quasi bialgebra. If there exists a module coalgebra structure $(U_{reg},\delta)$ on the regular representation $U_{reg}$ of $U$ such that $\delta(1)$ is invertible in $U \otimes U$, then there is a twist equivalent quasi bialgebra $U^J$ such that $\Phi^{J}=1 \otimes 1 \otimes 1$ and $\delta^J$ is the coproduct on $U^J$. 
		\item[b)] Let $U$ be a quasi bialgebra and $C \subset U$ a subalgebra. Let $(U,\delta)$ be a $U$-$C$-bimodule coalgebra such that its structure as a $U$-$C$ bimodule is that of the regular representation of $U$. If $\delta(1)$ is invertible, then there exists a twist $J$ of $U$ such that $\Phi_{U^J}=\Phi_C$ as operators on $U$.
	\end{itemize}
\end{proposition}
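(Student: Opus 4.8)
The plan is to extract the required twist directly from the single element $\gamma := \delta(1) \in U \otimes U$ and to recognize the (bi)module-coalgebra coassociativity constraint as exactly the cocycle relation that trivializes the associator down to $\Phi_C$. Both parts rest on one elementary observation: since $U_{reg}$ is generated as a left $U$-module by $1$ and $\delta$ is left $U$-linear, $\delta$ is completely determined by its value at $1$,
\[ \delta(u) = \delta(u \cdot 1) = \Delta_U(u)\,\gamma, \qquad u \in U, \]
so the only datum is $\gamma$, which by hypothesis is invertible. First I would verify that $\gamma$ is a genuine gauge transformation: evaluating the counit axiom $\varepsilon(c_1)c_2 = \varepsilon(c_2)c_1 = c$ at $c = 1$, together with the compatibility $\varepsilon(h\cdot c)=\epsilon_U(h)\varepsilon(c)$ (which identifies $\varepsilon$ with $\epsilon_U$ once $\varepsilon(1)=1$), yields $(\epsilon_U \otimes \mathrm{Id})(\gamma) = (\mathrm{Id} \otimes \epsilon_U)(\gamma) = 1$, precisely the normalization a twist must satisfy.

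For part (a) I would set $J := \gamma^{-1}$ and pass to $U^J$. By Proposition \ref{Gaugeprop} the transported structure $\delta^J(c) := J\,\delta(c)$ is a $U^J$-module coalgebra, and $\delta^J(1) = \gamma^{-1}\gamma = 1 \otimes 1$ by construction. The determination formula, now applied in $U^J$, gives $\delta^J(u) = \Delta_{U^J}(u)\,\delta^J(1) = \Delta_{U^J}(u)$, so $\delta^J$ is literally the coproduct of $U^J$. Finally, evaluating the module-coalgebra coassociativity axiom for $(U^J,\delta^J)$ at $c = 1$ and using $\delta^J(1) = 1\otimes 1$, both iterated coproducts collapse to $1 \otimes 1 \otimes 1$, forcing $\Phi_{U^J} = 1 \otimes 1 \otimes 1$.

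Part (b) is the identical argument in the bimodule setting, where the right $C$-action contributes the extra factor $\Phi_C^{-1}$ in the coassociativity axiom \eqref{HGcoass}. I would again take $J := \gamma^{-1}$, twisting only the left algebra $U$ and leaving $C$, $\Delta_C$, and $\Phi_C$ fixed. Using the bimodule analogue of Proposition \ref{Gaugeprop} (twist $H = U$ by $J$, twist $G = C$ trivially), $\delta^J = J\delta$ becomes a $U^J$-$C$-bimodule coalgebra whose right associator is still $\Phi_C$ and with $\delta^J(1) = 1 \otimes 1$, whence again $\delta^J = \Delta_{U^J}$. Evaluating \eqref{HGcoass} at $c = 1$, both iterated coproducts collapse to $1\otimes 1\otimes 1$ and the relation reduces to $\Phi_{U^J}\,\Phi_C^{-1} = 1\otimes 1\otimes 1$, that is $\Phi_{U^J} = \Phi_C$ in $U^{\otimes 3}$ (equivalently as operators on the faithful module $U_{reg}^{\otimes 3}$). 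Part (a) is then the special case $C = \mathbb{C}$, $\Phi_C = 1$.

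The step I expect to be the main obstacle is justifying the bimodule twist in part (b). Since $\gamma^{-1}$ need not lie in $C \otimes C$, one must check carefully that a left twist of $U$ genuinely leaves the right $C$-action and the associator $\Phi_C$ untouched — this is exactly what lets $\Phi_C$ reappear unchanged on the right-hand side. The point to confirm is that right $C$-linearity of $\delta$ encodes the intertwining relation $\Delta_U(a)\,\gamma = \gamma\,\Delta_C(a)$ for $a \in C$, so that $\delta^J = J\delta$ stays right $C$-linear for the unchanged $\Delta_C$. If the bimodule gauge lemma is not available in exactly this form, I would fall back on a direct (convention-heavy but routine) computation: substitute $\delta(u) = \Delta_U(u)\gamma$ into \eqref{HGcoass} at $c = 1$ to express $\Phi_U$ as an explicit coboundary of $\gamma$ conjugated by $\Phi_C$, then feed this into the standard transformation formula for the associator under the twist $\gamma^{-1}$; using that $\mathrm{Id}\otimes\Delta_U$ and $\Delta_U\otimes\mathrm{Id}$ are algebra maps, all $\gamma$-factors telescope and leave exactly $\Phi_C$. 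Pinning down the side and the inverse in the twisting conventions so that $\delta^J(1) = 1\otimes 1$ is the only remaining bookkeeping.
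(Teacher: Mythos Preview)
Your proposal is correct and matches the paper's proof essentially step for step: set $J=\delta(1)^{-1}$, invoke Proposition~\ref{Gaugeprop} to pass to the $U^J$-module (resp.\ $U^J$-$C$-bimodule) coalgebra $\delta^J$ with $\delta^J(1)=1\otimes 1$, and evaluate the coassociativity axiom at $1$ to read off $\Phi_{U^J}=1$ (resp.\ $\Phi_{U^J}=\Phi_C$). Your extra care about the bimodule version of the gauge lemma and the fallback direct computation are sound but not needed beyond what the paper itself assumes; the paper simply asserts that $(U_{reg},\delta^J)$ remains a $U^J$-$C$-bimodule coalgebra and proceeds exactly as you do.
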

\begin{proof}
	a) Since $\delta(1)$ is invertible in $U \otimes U$, it follows from Equation \eqref{Hcou} that $\delta(1)$ is a gauge transformation on $U$ so we can define the twisted quasi bialgebra $U^J$ where $J:=\delta(1)^{-1}$. It then follows from Proposition \ref{Gaugeprop} that $(U_{reg},\delta^J)$ is a module-coalgebra for $U^J$ where $\delta^J(x)=J\delta(x)$ for all $x \in U^J$, so $\delta^J(1)=1 \otimes 1$. Since $\delta^J$ is coassociative, we have $\Phi^J(\delta^J \otimes \mathrm{Id}) \circ \delta^J = (\mathrm{Id} \otimes \delta^J) \circ \delta^J$ so $\Phi^J(1 \otimes 1) \otimes 1=1 \otimes (1 \otimes 1)$ and because $U_{reg}$ is a faithful representation, we have $\Phi^J=1 \otimes 1 \otimes 1$. It follows from Equation \eqref{Hcop} that $\delta^J$ is the coproduct on $U^J$.\\
	
	b) As in a), we have a gauge transformation $J:=\delta(1)^{-1}$ such that $\delta^J(1)=1 \otimes 1$ and $(U_{reg},\delta^J)$ is a $U^J$-$C$ bimodule coalgebra. Coassociativity gives
	\[ \Phi_{U^J} \cdot (\delta^J \otimes \mathrm{Id}) \circ \delta(x) \cdot \Phi_C^{-1}=(\mathrm{Id} \otimes \delta^J) \circ \delta^J(x)\]
	Taking $x=1$ gives 
	\[ \Phi_{U^J} \cdot (1 \otimes 1) \otimes 1 \cdot \Phi_C^{-1}=(1 \otimes 1) \otimes 1 \]
	so $\Phi_{U^J}=\Phi_C$ since we are in the regular representation.
\end{proof}
\begin{proposition}\label{oplax}
	Suppose we have oplax monoidal functors $\mathbb{V}_k:\mathrm{Rep}(C) \to \mathrm{Rep}(U)$, $k=1,...,n$ such that $\bigotimes\limits_{k=1}^n \mathbb{V}_k(C_{reg})$ is a projective generator of $\mathrm{Rep}(U)$, such that in addition the coproduct $\delta$ induced by the coproduct of the regular $C$-$C$-bimodule $C_{reg}$ sends the image of a generator $1$ to a $U$-$U$-bimodule generator. Then there exists a twist $J$ of $U$ such that $\Phi_{U^J}=\Phi_{C}$.
\end{proposition}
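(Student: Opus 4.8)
The plan is to reduce the statement to Proposition \ref{bimod} b), whose hypotheses are tailored to exactly this conclusion: once we equip the regular representation $U_{reg}$ with a $U$-$C$-bimodule coalgebra structure $\delta$ whose $\delta(1)$ is invertible, the twist $J=\delta(1)^{-1}$ satisfies $\Phi_{U^J}=\Phi_C$. So the whole task is to manufacture such a $\delta$ from the oplax functors $\mathbb{V}_k$, and to recognize the stated ``bimodule generator'' hypothesis as the invertibility input. Indeed, a cyclic generator of the free rank-one $U$-$U$-bimodule $U\otimes U$ (under the outer left and right multiplications) is a unit, so requiring $\delta$ to send the image of the generator $1$ to a $U$-$U$-bimodule generator is precisely the way we secure the invertibility of $\delta(1)$, which by \eqref{Hcou} makes it a gauge transformation.

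First I would identify the coalgebra being transported. The regular $C$-$C$-bimodule $C_{reg}$ is a $C$-$C$-bimodule coalgebra with coproduct $\Delta_C$: condition \eqref{HGcoass} for $\delta=\Delta_C$ is exactly the quasi-coassociativity axiom of the quasi-bialgebra $C$, so its coassociativity constraint is $\Phi_C$. Next I use that an oplax monoidal functor carries coalgebra objects to coalgebra objects: composing $\mathbb{V}_k(\Delta_C)$ with the oplax structure map yields a coproduct on $\mathbb{V}_k(C_{reg})$. The right regular $C$-action on $C_{reg}$ is by left-$C$-linear endomorphisms, so applying $\mathbb{V}_k$ to these furnishes a right $C$-action, and together with the left $U$-action coming from $\mathbb{V}_k$ this makes each $\mathbb{V}_k(C_{reg})$ a $U$-$C$-bimodule coalgebra in $\mathrm{Rep}(U)$, compatibly with \eqref{Hcop}. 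Crucially, because the coproduct descends from $C_{reg}$, its coassociativity remains governed by $\Phi_C$ and not by the ambient $\Phi_U$; this is the structural reason that $\Phi_C$, rather than $\Phi_U$, appears at the end.

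The remaining and \emph{hardest} step is to pass from the individual $\mathbb{V}_k(C_{reg})$ to a single $U$-$C$-bimodule coalgebra structure $\delta$ on $U_{reg}$. Here I assemble the factors into $P=\bigotimes_k\mathbb{V}_k(C_{reg})$, endowed with the coproduct induced by that of $C_{reg}$, and then invoke the two standing hypotheses: that $P$ is a projective generator, and that $\delta$ sends the distinguished generator to a $U$-$U$-bimodule generator. The generator condition lets me locate $U_{reg}$ as the cyclic sub-bimodule of $P$ generated by the image of $1$, carrying an induced coproduct with $\delta(1)$ invertible, while projectivity and the generating property ensure that no information is lost in this restriction. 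With $\delta(1)^{-1}$ in hand as a gauge transformation, Proposition \ref{bimod} b) (via Proposition \ref{Gaugeprop}) produces the twist $J$ with $\Phi_{U^J}=\Phi_C$. I expect the genuine obstacle to lie precisely in this assembly-and-transfer step: in a merely monoidal, non-braided $\mathrm{Rep}(U)$ the tensor product of coalgebras is not automatically a coalgebra, so the coproduct on $P$ must be read off from the generator rather than from a naive shuffle, and it is exactly the ``bimodule generator'' hypothesis that makes both the identification of $U_{reg}$ inside $P$ and the invertibility of $\delta(1)$ go through.
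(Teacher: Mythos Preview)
Your first two paragraphs are on target and match the paper: $C_{reg}$ is a $C$-$C$-bimodule coalgebra, and each oplax functor $\mathbb{V}_k$ (the paper phrases this via $\mathbb{V}_k\boxtimes\mathrm{Id}$ on the Deligne product $\mathrm{Rep}(C)\boxtimes_{\mathrm{Rep}(C)}\mathrm{Rep}(C)$) transports it to a $U$-$C$-bimodule coalgebra $\mathbb{V}_k(C_{reg})$ whose right coassociativity constraint remains $\Phi_C$. Your reading of the ``$U$-$U$-bimodule generator'' hypothesis as the invertibility input for $\delta(1)$ is also the intended one.

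The divergence, and the gap, is in your third paragraph. You propose to \emph{locate $U_{reg}$ inside $P=\bigotimes_k\mathbb{V}_k(C_{reg})$} so as to invoke Proposition~\ref{bimod}~b) verbatim. This step does not go through, and you flag it as the obstacle without resolving it: there is no coalgebra structure on $P$ (absent a braiding the tensor product of coalgebras is not a coalgebra, as you note), there is no right $U$-action on $P$ (only a right $C$-action), and the cyclic $U$-$C$-sub-bimodule generated by the image of $1$ has no reason to be free of rank one over $U$ on the left---indeed in the running example $\mathbb{V}(C_{reg})=U/UE$, and the submodule you would produce is this, not $U_{reg}$.

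The paper sidesteps all of this by never looking for $U_{reg}$. It runs the \emph{argument} (not the literal statement) of Proposition~\ref{bimod}~b) directly on each $\mathbb{V}_k(C_{reg})$: after twisting by $J=\delta(1)^{-1}$ so that $\delta^J(1)=1\otimes1$, evaluating the bimodule coassociativity \eqref{HGcoass} at $c=1$ yields $\Phi_{U^J}\cdot(1\otimes1\otimes1)\cdot\Phi_C^{-1}=1\otimes1\otimes1$, i.e.\ $\Phi_{U^J}$ and $\Phi_C$ agree \emph{as operators on $\mathbb{V}_k(C_{reg})$}. That is weaker than equality in $U^{\otimes 3}$, and it is exactly here---not in carving out $U_{reg}$---that the projective-generator hypothesis enters: since $\bigotimes_k\mathbb{V}_k(C_{reg})$ is a projective generator of $\mathrm{Rep}(U)$, two elements of $U^{\otimes 3}$ that act identically on its triple tensor power are equal, so $\Phi_{U^J}=\Phi_C$. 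In short, the role of the tensor product $P$ is faithfulness, not a container for $U_{reg}$.
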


\begin{proof}
	We can view $C$ as a $C$-$C$-bimodule coalgebra with the regular representation. The category of $C$-bimodules is equivalent to the relative Deligne tensor category $\mathrm{Rep}(C) \boxtimes_{\mathrm{Rep}(C)} \mathrm{Rep}(C)$. Since the image of a coalgebra object under an oplax functor is a coalgebra object, the image of $C$ under $\mathbb{V}_K \boxtimes \mathrm{Id}$ is a $U$-$C$ bimodule coalgebra for each $k=1,...,n$ with $\delta(1)$ invertible. Therefore, it follows from Proposition \ref{bimod} that the gauge transformation $J:=\delta(1)$ yields a twist $U^J$ such that $\Phi_{U^J}$ and $\Phi_C$ agree as operators on $\mathbb{V}_k(C)$. However, 
	\[ \bigotimes_{k=1}^n \mathbb{V}_k(C)\]
	is a projective generator for $\mathrm{Rep}(U)$, so $\Phi_{U^J}$ and $\Phi_C$ agree as operators on a projective generator and therefore agree.
\end{proof}

\begin{theorem}\label{Catthm}
	Let $\cC, \cD$ be a finite tensor categories such that there exists a quasi-fiber functor $F_{\cD}:\cD \to \mathrm{Vec}$ and an essentially surjective quasi-tensor functor $G:\cC \to \cD$ as in Proposition \ref{prop2}. 	Suppose we have oplax monoidal functors $\mathbb{V}_k:\mathrm{Rep}(C) \to \mathrm{Rep}(U)$, $k=1,...,n$ such that $\bigotimes_k \mathbb{V}_k(C_{reg})$ is a projective generator of $\mathrm{Rep}(U)$, such that in addition the coproduct $\delta$ induced by the coproduct of the regular $C$-$C$-bimodule $C_{reg}$ sends the image of a generator $1$ to a $U$-$U$-bimodule generator, as in Proposition \ref{oplax}.
	Then there exist a twist $U^J$ of $U$, such that $\Phi_{U^J}=\Phi_C$ and thus $C=\mathrm{End}(F_{\cD})$ is a sub quasi bisubalgebra of $U^J$. Moreover there is a unique choice of such a twist such that $\delta$
\end{theorem}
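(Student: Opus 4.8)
The plan is to obtain the theorem by assembling the two preceding results, Proposition \ref{prop2} and Proposition \ref{oplax}, and then to read off uniqueness from the normalization of the twist. The existence of a twist $U^J$ with $\Phi_{U^J}=\Phi_C$ is literally the conclusion of Proposition \ref{oplax}, whose hypotheses coincide with those assumed here; so the first half of the statement requires nothing beyond invoking it. What remains is (i) to upgrade the algebra-level inclusion $C\subset U$ of Proposition \ref{prop2} to an inclusion of quasi bialgebras into $U^J$, and (ii) to single out the unique admissible twist normalized by $\delta(1)=1\otimes 1$.

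For (i) I would recall that Proposition \ref{prop2} already furnishes an algebra inclusion $C\hookrightarrow U$ preserving the coproduct and counit, and that passing to a twist changes neither the underlying algebra nor the counit; the associators are then reconciled by $\Phi_{U^J}=\Phi_C$. The one point needing care is that twisting does alter the coproduct, so I must check $\Delta_{U^J}|_C=\Delta_C$. Here I would use that the twist is $J=\delta(1)^{-1}$ arising from the $U$-$C$-bimodule coalgebra $(U_{reg},\delta)$ of Proposition \ref{bimod}(b), with $\delta(1)$ invertible because the generator $1$ maps to a $U$-$U$-bimodule generator. Restricting $\delta$ to $c\in C\subset U$ through the right $C$-action, the bimodule-coalgebra axiom $\delta(x\cdot g)=\delta(x)\Delta_C(g)$ gives $\delta(c)=\delta(1\cdot c)=\delta(1)\,\Delta_C(c)$, hence $\delta^{J}(c)=J\delta(c)=\delta(1)^{-1}\delta(1)\,\Delta_C(c)=\Delta_C(c)$. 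Since $\delta^{J}$ is the genuine coproduct $\Delta_{U^J}$ by Proposition \ref{bimod}(a), this yields $\Delta_{U^J}|_C=\Delta_C$, so all four structure maps match and $C=\mathrm{End}(F_{\cD})$ is a sub quasi bialgebra of $U^J$.

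For (ii) the normalization pins the twist down at once. Any admissible twist $J'$ with $\delta^{J'}(1)=1\otimes 1$ satisfies $J'\,\delta(1)=1\otimes 1$, forcing $J'=\delta(1)^{-1}$; and since $\delta$ is canonically determined by pushing the coproduct of the $C$-$C$-bimodule $C_{reg}$ through $\mathbb{V}_k\boxtimes\mathrm{Id}$, the element $\delta(1)$ is fixed, hence so is $J'$. Thus there is exactly one such twist, and it agrees with the $J$ produced above.

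The main obstacle I anticipate is (i): reconciling the fact that the passage to $U^J$ a priori conjugates the coproduct with the requirement that the sub-coalgebra $C$ be preserved on the nose. The resolution hinges on distinguishing the module-coalgebra coproduct $\delta$ on $U_{reg}$ (which is only \emph{left}-multiplied by $J$) from the naive conjugation $J\Delta J^{-1}$, and on exploiting the interplay of the left $U$- and right $C$-actions: the identities $\delta(h\cdot c)=\Delta_U(h)\delta(c)$ and $\delta(c\cdot g)=\delta(c)\Delta_C(g)$ are precisely what make $\delta^{J}$ simultaneously the twisted coproduct $\Delta_{U^J}$ and an extension of $\Delta_C$. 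A secondary subtlety is that there is one coproduct $\delta$ per functor $\mathbb{V}_k$, so one should confirm that the normalization $\delta(1)=1\otimes 1$ is consistent across the $k$; this is guaranteed because each $\delta$ is induced from the single coalgebra $C_{reg}$.
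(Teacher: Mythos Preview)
Your proof is correct and follows the paper's approach, which consists simply of invoking Proposition \ref{prop2} for the inclusion $C\subset U$ preserving coproduct, counit, and algebra structure, and Proposition \ref{oplax} for the existence of a twist with $\Phi_{U^J}=\Phi_C$. Your additional arguments---verifying $\Delta_{U^J}|_C=\Delta_C$ via the right $C$-action on the bimodule coalgebra and extracting uniqueness from the normalization $\delta^J(1)=1\otimes 1$---go beyond the paper's two-sentence proof, which leaves both points implicit (indeed the uniqueness clause in the theorem statement is itself truncated).
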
 

\begin{proof}
	We have seen in Proposition \ref{prop2} that there is an inclusion $C \subset U$ of the associated quasi bialgebra $U=\mathrm{End}(F_{\cD} \circ G)$, $C=\mathrm{End}(F_{\cD})$ preserving the coproduct, counit, and algebra structure. Since the conditions of Propositiion \ref{oplax} are satisfied, we also have $\Phi_{U^J}=\Phi_C$ for some twist $J$ of $U$. 
\end{proof}

\section{Characterization}
Our goal is to classify all quasitriangular quasi bialgebras whose representation categories coincide with that of the triplet vertex operator algebra $\mathcal{W}(2)$. We proceed in steps, first classifying the possible coproducts and antipodes. That is, we first classify those quasi bialgebras whose representation categories are tensor equivalent to that of the triplet. We then classify the quasitriangular quasi bialgebras whose representation categories are braided tensor equivalent to that of the triplet. \\

\begin{definition}
	As an algebra, the small quantum group $\tilde{u}_i(\mathfrak{sl}_2)=u_i(\mathfrak{sl}_2)$ at fourth root of unity $i=\sqrt{-1}$ is the $\mathbb{C}$-algebra with generators $E,F,K^{\pm 1}$, and relations
	\begin{align}
	\label{Uq1} KE&=-EK, & KF&=-FK, & [E,F]&=\frac{1}{2}(K-K^{-1})\\
	\label{Uq2} \quad & & F^2&=E^2=0, & KK^{-1}&=K^{-1}K=1
	\end{align}
\end{definition}
\noindent $\tilde{u}_i(\mathfrak{sl}_2)$ has four Verma modules $M_k$ with $k\in \{0,1,2,3\}$. Each $M_k$ is two dimensional spanned by $\{v_k,Fv_k\}$ where
\[ Kv_k=i^kv_k, \quad KFv_k=-i^kFv_k, \quad Ev_k=0, \quad EFv_k=\delta_{2|k}i^{k-1} v_k \]
where $\delta_{2|k}$ is $1$ if $2|k$ and $0$ otherwise. It is clear then that $M_k$ is irreducible if $k$ is odd and reducible if $k$ is even. Let $X_k$, $k\in \{0,...,3\}$ denote the irreducible modules of $\tilde{u}_i(\mathfrak{sl}_2)$ where $X_k$ is the irreducible quotient of $M_k$. We therefore have $X_k=M_k$ if $k$ is odd and 
\[ K \cdot X_k = i^k \cdot X_k\]

\noindent The following semisimple quasi-triangular quasi-Hopf algebra $C$ appears below as the Cartan subalgebra of $u_i(\mathfrak{sl}_2)$:

\begin{definition}
Let $C=\C[\Z_4]=\mathrm{Span}_{\mathbb{C}}\{K\}$ with $K^4=1$ be a group algebra. This algebra is isomorphic to the dual group algebra $\C^{\Z_4}=\bigoplus\limits_{k=0}^3 \mathbb{C}_k $ via 
\begin{align*}
e_0&:=\frac{1}{4}(1+K+K^2+K^3) & e_1&:=\frac{1}{4}(1-iK-K^2+iK^3)\\
e_2&:=\frac{1}{4}(1-K+K^2-K^3) & e_3&:=\frac{1}{4}(1+iK-K^2-iK^3)
\end{align*}
which are orthogonal idempotents $ \label{eij} e_i \cdot e_j= \delta_{i,j} e_i$.\\

\noindent
The algebra $C$ has accordingly four simple representations $\C_i$ with the action
\begin{equation}\label{Kaction} K \cdot \mathbb{C}_k = i^k \mathbb{C}_k.
\end{equation}
The algebra $C$ can be given the structure of a quasi-triangular quasi-Hopf algebra with coproduct, counit, antipode
\begin{align}
\label{Ccop}\Delta_C(e_k)&=\sum\limits_{i+j=k} e_i \otimes e_j\\
\label{Ccou}\epsilon_C(e_k)&=\delta_{k,0}\\
\label{Cant}S_C(e_k)&=e_{-k}\\
 \intertext{and with coassociator and $R$-matrix given by a quadratic form $Q(k)=\beta^{(k^2)}$ on $\Z_4$ for any fixed $\beta$ with $\beta^4=-1$. We take the following representative associator and $R$-matrix in accordance with \cite{FGR1}}\\
\label{Ccoass}\Phi_C^{\pm1}&= \sum\limits_{a,b,c=1}^3 \phi_{abc}^\pm (e_a \otimes e_b \otimes e_c) \\
\phi_{abc}^\pm&:=\begin{cases}
\mp \beta^2,\quad &2\nmid a,b\text{ and } c=1\\
-1,\quad &2\nmid a,b\text{ and } c=2\\
\pm \beta^2,\quad &2\nmid a,b\text{ and } c=3\\
1, \quad &\mathrm{otherwise}
\end{cases}\\
R&=\sum\limits_{a,b=1}^3 R^{a,b}(e_a\otimes e_b)\\
R^{a,b}&:=\begin{pmatrix}
1 & 1 & 1 & 1 \\
1 & \beta & 1 & \beta \\
1 & -1 & -1 & 1 \\
1 & -\beta & -1 & \beta 
\end{pmatrix},
	R^{a,b}R^{b,a}=\begin{pmatrix}
1 & 1 & 1 & 1 \\
1 & \beta^2 & -1 & -\beta^2 \\
1 & -1 & 1 & -1 \\
1 & -\beta^2  & -1 & \beta^2 
\end{pmatrix}
\end{align}
\end{definition}

\begin{definition}
The oplax tensor functors $\V,\overline{\V}$ from $\Rep(C)$ to $\Rep(U)$ are defined by induction 
\begin{align*}
\V(X)&:=X\otimes_{U^{\leq 0}} U \\
\overline{\V}(X)&:=X \otimes_{U^{\geq 0}} U\\
\end{align*}
We have $\mathbb{V}(\mathbb{C}_k)=\mathrm{Span}\{e_k,Fe_k\}$ where $Ee_k=0$, and it follows from Equations \eqref{Kaction} and \eqref{Uq1} that
\begin{align*}
EFe_0&=0, & EFe_1&=e_1, & EFe_2&=0,  & EFe_3&=-e_3.
\end{align*}
Similarly, $\overline{\mathbb{V}}(\mathbb{C}_k^{\pm})=\mathrm{Span}\{e_k^{\pm}, Ee_k^{\pm}\}$ where $Fe_k^{\pm}=0$ and 
\begin{align*} FEe_0^+&=0, &  FEe_1&=-e_1, & FEe_2&=0,  & FEe_3&=e_3.
\end{align*}
 We therefore see that $\mathbb{V}(e_k) \cong M_k$, $\overline{\mathbb{V}}(e_k) \cong \overline{M}_k$ as modules over $u_i(\mathfrak{sl}_2)$ where $\overline{M}_k$ is the opposite Verma module. Then the images of the regular representation of $C$ under $\mathbb{V},\overline{\mathbb{V}}$ are as $u_i(\mathfrak{sl}_2)$-modules
\[ \mathbb{V}(C_{reg}) \cong \bigoplus\limits_{k=0}^3 M_k, \qquad \overline{\mathbb{V}}(C_{reg}) \cong \bigoplus\limits_{k=0}^3 \overline{M}_k\] 
 We now consider the regular $C$-$C$-bimodule $C_{\reg}=\bigoplus_{k=0}^3 \C_{k}$ and the induced module $\V(C_\reg)=\bigoplus \V(\C_k)$ with 
basis $\{e_k,Fe_k\}$, $k\in \{0,1,2,3\}$. The left action of $C=\C^{\Z_4}$ is by 
$C\subset U$ and the right action of $C$ conveniently coincides in this 
notation with the right-multiplication by $C\subset U$.
\end{definition}

\subsection{Classification of braided tensor structure on $\Rep(U)$}

The image of a coalgebra under an oplax tensor functor is a coalgebra. Similarly, the image of a $C$-$C$-bimodule coalgebra under $\V,\overline{\V}$ is a $U$-$C$-bimodule coalgebra. Note that the regular representation $C_{reg}$ is a  $C$-$C$-bimodule coalgebra but not a $C$-module coalgebra due to the nontrivial associator. We now classify all possible coalgebra structures on $\V(C_\reg),\overline{\V}(C_\reg)$. In this explicit case it follows for $1=\sum_ae_a$ from the grading and the coassociativity that $\delta(1)$ is again a generator. Hence we may invoke Theorem \ref{Catthm} and thus assume that after going to a twist equivalent $U$ we have $\delta(1)=1\otimes 1$ and $\Phi_U=\Phi_C$.

\begin{proposition}\label{prop_Vreg}
     The following are all $\Z_4$-$\Z_4$-graded coalgebra structures on 
$\V(C_\reg)$ such that $\delta(1)=1\otimes 1$.

     \[ \delta(F \cdot1) := (F\otimes 1)\sum_{a,b} c_L^{ab}(e_a\otimes e_b)
     + (1\otimes F)\sum_{a,b} c_R^{ab}(e_a\otimes e_b)\]
with $c_L^{ab},c_R^{a,b} \in \mathbb{C}$. If $c^{a,b}_L, c^{a,b}_R\neq 0$, then
\begin{align}
\label{cL}c_L^{ab}&=\frac{c_{a+b}}{c_a}\\
\label{cR}c_R^{ab}&=\epsilon^a \frac{c_{a+b}}{c_b}(-1)^{a(a-1)/2}
\end{align}
     for any choice of scalars $c_0=1,c_1,c_2,c_3$ and any global choice 
$\epsilon^4=1$. By the same argument, $\delta(E \cdot1)$ is determined on 
$\overline{V}(C_\reg)$ by the same equation, swapping $F$ and $E$ with coefficients
$\bar{c}_0=1,\bar{c}_1,\bar{c}_2,\bar{c}_3$ and $\bar{\epsilon}^4=1$.
\end{proposition}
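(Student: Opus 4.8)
The plan is to exploit the full rigidity of the structure: $\V(C_\reg)$ is a $\Z_4$-$\Z_4$-graded $U$-$C$-bimodule coalgebra, so I would first determine the allowed \emph{form} of $\delta(F\cdot 1)$ from the grading alone, then pin down the scalar coefficients from the coalgebra axioms. Since $\{e_k, Fe_k\}$ is a homogeneous basis and $\delta$ must respect both the left $U$-module structure and the $\Z_4\times\Z_4$-grading induced by the $C$-$C$-bimodule structure, the only homogeneous elements of the correct bidegree in $\V(C_\reg)\otimes\V(C_\reg)$ that can appear in $\delta(F\cdot 1)$ are those of the two stated types, $(F\otimes 1)(e_a\otimes e_b)$ and $(1\otimes F)(e_a\otimes e_b)$, because $F$ shifts the $K$-weight by the fixed amount dictated by \eqref{Uq1} and a term $F\otimes F$ would have the wrong $\Z_4$-degree. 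This reduces the problem to determining the two scalar families $c_L^{ab}, c_R^{ab}$.

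Next I would use the normalization $\delta(1)=1\otimes 1$ together with compatibility \eqref{Hcop}, i.e.\ $\delta(h\cdot c)=\Delta_U(h)\cdot\delta(c)$. Writing $F\cdot 1 = \sum_a F e_a$ and applying the left action of the idempotents $e_a\in C\subset U$ and of $F$ to the already-normalized $\delta(1)=\sum_a e_a\otimes \big(\sum_b e_b\big)$ forces the bigraded pieces: acting by $e_a$ on the left selects the $(a,\cdot)$ component, and acting by $F$ produces $\Delta(F)$ applied to $e_a\otimes e_{b}$. The key mechanism is that coassociativity \eqref{HGcoass}, now in the twisted picture where $\Phi_U=\Phi_C$, must hold; evaluating $\Phi_C\cdot(\delta\otimes\Id)\delta(F\cdot 1)\cdot\Phi_C^{-1}=(\Id\otimes\delta)\delta(F\cdot 1)$ on the homogeneous basis yields a recursion relating $c_L^{a+b,c}c_L^{a,b}$-type products, whose solution is the multiplicative cocycle form $c_L^{ab}=c_{a+b}/c_a$. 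The right-hand coefficients $c_R^{ab}$ are governed by the same recursion but twisted by the nontrivial values of $\Phi_C$ from \eqref{Ccoass} (the factors $\mp\beta^2$) and by the sign coming from $KF=-FK$, which is precisely what produces the extra $(-1)^{a(a-1)/2}$ and the global ambiguity $\epsilon^a$ with $\epsilon^4=1$.

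The main obstacle, and the step I would treat most carefully, is the coassociativity computation for the \emph{mixed} interaction between the left-$F$ and right-$F$ terms: applying $(\delta\otimes\Id)$ to $(1\otimes F)(e_a\otimes e_b)$ reintroduces a left-$F$ contribution on the middle tensor factor, so the two coefficient families do not decouple and one must check that the associator $\Phi_C$ reshuffles them consistently. Tracking the $\beta^2$-phases from \eqref{Ccoass} across both slots is where a sign or a power of $\beta$ can easily be lost; this is exactly the place where the global $\epsilon$ and the quadratic sign $(-1)^{a(a-1)/2}$ are forced rather than assumed. Once this is verified, counit compatibility \eqref{Hcou} is routine and simply imposes $c_0=\bar c_0=1$. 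The statement for $\overline{\V}(C_\reg)$ follows by the symmetric argument under the swap $E\leftrightarrow F$, using the opposite-Verma action computed in the preceding definition, so I would remark that it requires no separate calculation.
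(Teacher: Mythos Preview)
Your overall architecture---grading pins down the form, coassociativity pins down the coefficients, the mixed term is where $\epsilon$ enters---matches the paper's proof. However, your second paragraph contains a genuine confusion that you should remove. You invoke compatibility \eqref{Hcop} with $\Delta_U(F)$ to compute $\delta(F\cdot 1)$, but $\Delta_U(F)$ is precisely what is \emph{not} known at this stage: the whole point of this proposition is to classify the possible $\delta$'s so that in the subsequent step (Proposition \ref{prop_EF}) one can use $\delta(F\cdot 1)=\Delta_U(F)\cdot(1\otimes 1)$ to constrain $\Delta_U(F)$. The proposition asks only for $\Z_4$-$\Z_4$-graded coalgebra structures, i.e.\ compatibility with the left and right $C$-actions, and the paper proceeds accordingly: the right $C$-grading (placing $e_k,Fe_k$ in degree $k$) together with the left $K$-action (so $K\otimes K$ acts on $\delta(Fe_k)$ by $-i^k$) forces the displayed two-term form; counitality gives $c_L^{a,0}=c_R^{0,b}=1$; and then coassociativity alone yields three recursions, one for each of the basis types $Fe_u\otimes e_v\otimes e_b$, $e_u\otimes Fe_v\otimes e_b$, $e_u\otimes e_v\otimes Fe_b$. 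Drop the appeal to $\Delta_U$ and your argument becomes essentially the paper's.

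One smaller correction: the associator does not contribute raw $\mp\beta^2$ phases. In the $U$-$C$-bimodule setting the relevant action is $X\mapsto \Phi_C\cdot X\cdot \Phi_C^{-1}$, and on $e_a\otimes e_b\otimes Fe_c$ the left action sees $Fe_c$ in degree $c+2$ while the right action sees degree $c$; the product $\phi^+_{a,b,c+2}\phi^-_{a,b,c}$ is always $\pm 1$, equal to $(-1)^{ab}$, with the $\beta$'s cancelling. This sign enters only the third recursion (the pure $c_R$ one) and is what produces $(-1)^{a(a-1)/2}$; the fourth-root $\epsilon$ then arises from the mixed recursion relating $c_L$ and $c_R$, exactly as you anticipated.
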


\begin{proof}
We have assumed $\delta(1)=1\otimes 1$ where $1=\sum\limits_{k=0}^3 e_k$ which gives
\[ \sum\limits_{k=0}^3 \delta(e_k)=\delta(1)=1 \otimes 1 =\sum\limits_{a,b=0}^3 e_a \otimes e_b. \]
Since $\mathbb{V}(C_{\reg})$ is $\mathbb{Z}_4$-graded as a coalgebra with $\mathbb{V}(C_{\reg})_{\bar{k}}=\mathrm{Span}\{e_k,Fe_k\}$, we have
$$ \delta(e_k)=\sum_{a+b=k} e_{a}\otimes e_{b}$$
Since the coproduct must respect the $\mathbb{Z}_4$-grading, an arbitrary expression for $\delta(F \cdot e_k)$ is
\[ \delta(F \cdot e_k)=\sum\limits_{\substack{ u,v \in \{0,1\} \\ a+b=k}} F^ue_a \otimes F^v e_b, \]
and $K \otimes K \delta(F \cdot e_k)= -i^k \delta( F \cdot e_k)$, so we have $(-1)^{u+v}i^k=-i^k$ and therefore $u+v$ is odd. Hence, an arbitrary expression for $\delta(F \cdot 1)=\sum\limits_{k=0}^3\delta(F \cdot e_k)$ is

\begin{align*}
\delta(F \cdot 1)&=: (F\otimes 1)\sum_{a,b=0}^3 c_L^{ab}(e_a\otimes e_b)
+ (1\otimes F)\sum_{a,b=0}^3 c_R^{ab}(e_a\otimes e_b)\\
\intertext{and thus by the right $C$-module structure}
\delta(F \cdot e_k)&= (F \otimes 1) \sum\limits_{a+b=k} c_L^{ab} (e_a \otimes e_b)+(1 \otimes F) \sum\limits_{a+b=k}c_R^{ab}(e_a \otimes e_b)
\end{align*}
By applying counitality to each $\delta(F \cdot e_k)$ we have $c_L^{a,0}=c_R^{0,b}=1$. We now compute constraints on the coefficients using coassociativity

%1706.08164 page 11 Ke_l=i^l beta=e^{2pii (1/8)} to get lattice voa right

\begin{align*}
&(\delta \otimes \mathrm{Id}) \circ \delta(F \cdot 1)=\sum\limits_{a,b=0}^3 c_L^{ab}\delta(F \cdot e_a) \otimes e_b+\sum\limits_{a,b=0}^3 c_R^{ab} \delta(e_a) \otimes F \cdot e_b \\
&= \sum\limits_{a,b=0}^3 \sum\limits_{u+v=a} c_L^{ab} c_L^{uv}  F  e_u \otimes e_v \otimes e_b\\
&+\sum\limits_{a,b=0}^3 \sum\limits_{u+v=a} c_L^{ab}c_R^{uv}e_u \otimes Fe_v \otimes e_b+\sum\limits_{a,b=0}^3\sum\limits_{u+v=a}c_R^{ab} e_u \otimes e_v \otimes Fe_b\\
\qquad \\
&(\mathrm{Id} \otimes \delta) \circ \delta(F \cdot 1)=\sum\limits_{a,b=0}^3 c_L^{ab}Fe_a \otimes \delta(e_b)+ \sum\limits_{a,b=0}^3 c_R^{ab}e_a \otimes \delta(Fe_b)\\
&=\sum\limits_{a,b=0}^3 \sum\limits_{u+v=b} c_L^{ab}Fe_a \otimes e_u \otimes e_v \\
&+\sum\limits_{a,b=0}^3 \sum\limits_{u+v=b} c_R^{ab}c_L^{uv} e_a \otimes Fe_u \otimes e_v+\sum\limits_{a,b=0}^3 \sum\limits_{u+v=b} c_R^{ab}c_R^{uv}e_a \otimes e_u \otimes Fe_v\\
\end{align*}

%\begin{align*}
%(\delta\otimes 1)\delta(F \cdot 1)
%%&= (\delta\otimes 1)(F\otimes 1)\sum_{i,j} a_{ij}(e_i\otimes e_j)
%%+ (\delta\otimes 1)(1\otimes F)\sum_{i,j} b_{ij}(e_i\otimes e_j)\\
%&= (F\otimes 1\otimes 1)\sum_{u,v} c_L^{uv}(e_u\otimes e_v\otimes 
%1)\sum_{a,b,c} c_L^{a+b,c}(e_{a}\otimes e_{b}\otimes e_c) \\
%&+ (1\otimes F\otimes 1)\sum_{u,v} c_R^{uv}(e_u\otimes e_v\otimes 
%1)\sum_{a,b,c} c_L^{a+b,c}(e_{a}\otimes e_{b}\otimes e_c)\\
%&+(1\otimes 1\otimes F)\sum_{a,b,c} c_R^{a+b,c}(e_{a}\otimes 
%e_{b}\otimes e_c)\\
%&=(F\otimes 1\otimes 1)\sum_{a,b,c} c_L^{ab}c_L^{a+b,c}(e_a\otimes 
%e_b\otimes e_c)\\
%&+(1\otimes F\otimes 1)\sum_{a,b,c} c_R^{ab}c_L^{a+b,c}(e_a\otimes 
%e_b\otimes e_c)\\
%&+(1\otimes 1\otimes F)\sum_{a,b,c} c_R^{a+b,c}(e_a\otimes e_b\otimes e_c)\\
%\quad\\
%(\delta\otimes 1)\delta(F \cdot 1)
%&=(F\otimes 1\otimes 1)\sum_{a,b,c} c_L^{a,b+c}(e_a\otimes e_b\otimes e_c)\\
%&+(1\otimes F\otimes 1)\sum_{a,b,c} c_L^{bc}c_R^{a,b+c}(e_a\otimes 
%e_b\otimes e_c)\\
%&+(1\otimes 1\otimes F)\sum_{a,b,c} c_R^{bc}c_R^{a,b+c}(e_a\otimes 
%e_b\otimes e_c)
%\end{align*}

Here, we have to use the coassociator of $U$-$C$-bimodules, which we have 
proven in Theorem \ref{Catthm} coincides with the coassociator of $C$ from Equation \eqref{Ccoass}. By Equation \eqref{HGcoass}, the coassociator acts on $U$-$C$-bimodules by $X \mapsto \Phi \cdot X \cdot \Phi^{-1}$, which acts trivially on $(e_a\otimes 
e_b\otimes e_c)$, $(Fe_a\otimes e_b\otimes 
e_c)$, and $(e_a\otimes Fe_b \otimes e_c)$, while it acts as $(-1)^{ab}$ on $(e_a\otimes e_b\otimes Fe_c)$. Now coassociativity 
reads for all $a,b,c$:
\begin{align*}
c_L^{ab}c_L^{a+b,c} &= c_L^{a,b+c}\\
  c_R^{ab}c_L^{a+b,c} &=c_L^{bc}c_R^{a,b+c}\\
  (-1)^{ab}c_R^{a+b,c} &=c_R^{bc}c_R^{a,b+c}
\end{align*}

\noindent The first and third equations with $a=0$ and $c=0$ respectively give 
\begin{align*}
c_L^{bc} &=\frac{c_L^{0,b+c}}{c_L^{0,b}}, \\
c_R^{ab} &=\frac{c_R^{a+b,0}}{c_R^{b,0}}(-1)^{ab}.
\end{align*}
This conversely solves the equations for any values 
$c_a:=c_L^{0,a},\;d_a:=c_R^{a,0}$, where $c_0=d_0=1$. If we plug this into the second equation we get
$$(-1)^{ab}\frac{d_{a+b+c}d_b}{d_{a+b}d_{b+c}}
=\frac{c_{a+b+c}c_b}{c_{a+b}c_{b+c}}.$$
In particular $b=0$ is a $2$-cocycle condition
$$(-1)^{ac}\frac{d_{a+c}}{d_{a}d_{c}}
=\frac{c_{a+c}}{c_{a}c_{c}}$$
which gives a necessary expression for $d_n$ in terms of $c_n$ and 
$\epsilon:=d_1/c_1$
\begin{equation} \label{bn} d_n=\epsilon^n c_n(-1)^{n(n-1)/2}
\end{equation}
and the necessary condition $\epsilon^4=1$. Rewriting the coefficients in terms of $c_n$ then gives $c_L^{ab}=\frac{c_{a+b}}{c_a}$ and
\begin{align*}
c_R^{ab}&=\frac{(-1)^{ab}d_{a+b}}{d_b}\\
&=(-1)^{ab}\frac{\epsilon^{a+b}c_{a+b}(-1)^{(a+b)(a+b-1)/2}}{\epsilon^b c_b (-1)^{b(b-1)/2}}\\
&=\epsilon^a \frac{c_{a+b}}{c_b}(-1)^{a(a-1)/2}
\end{align*}
\end{proof}

We now use our knowledge of the coalgebras $\V(C_{reg}),\overline{\V}(C_{reg})$ to determine the coproduct on $U$. This proceeds in several steps. Some of the following arguments are more general, but for clarity we spell them out on the explicit algebra in question:
\begin{enumerate}[a)]
\item We write down an arbitrary expression for $\Delta(E)$ and $\Delta(F)$, 
keeping in mind that $K$ acts adjointly on $E,F$ by $-1$. Therefore, since $\Delta(KX)=-\Delta(XK)$ for $X\in \{E,F\}$ and $\Delta(K)=K \otimes K$, the summands in $\Delta(E)$ and $\Delta(F)$ 
have to contain $E,F$ an odd number of times
\begin{align*}
\Delta(F)
&=\sum\limits_{a,b =0}^3\big( (F \otimes 1)  c_{L}^{ ab}+(1 
\otimes F)  c_{R}^{ ab}\\
&+(E \otimes 1)  r_{L}^{ab}+(1 \otimes E)  r_{R}^{ ab}\\
&+(E \otimes EF)  t_{L}^{ab}+(EF \otimes E)  t_{R}^{ab}\\
&+(F \otimes EF)  s_{L}^{ab}+(EF \otimes F)  s_{R}^{ab}\big) (e_a 
\otimes e_b )\\
\quad\\   
\Delta(E)
&=\sum\limits_{a,b =0}^3\big( (E \otimes 1) \bar{c}_{L}^{ ab}+(1 
\otimes E)  \bar{c}_{R}^{ab}\\
&+(F \otimes 1)  \bar{r}_{L}^{ab}+(1 \otimes F)  \bar{r}_{R}^{ ab}\\
&+(F \otimes EF)  \bar{t}_{L}^{ab}+(EF \otimes F) \bar{t}_{R}^{ab}\\
&+(E \otimes EF)  \bar{s}_{L}^{ab}+(EF \otimes E) \bar{s}_{R}^{ab}\big) 
(e_a \otimes e_b )
\end{align*}
\item It follows from $\Delta(X) \cdot (1 \otimes 1) = \Delta(X) \cdot \delta(1)=\delta(X \cdot 1)$ that  the coefficients $c_L,c_R,\bar{c}_L,\bar{c}_R$ coincide with the corresponding coefficients of $\delta(F \cdot 1)$ appearing in Proposition \ref{prop_Vreg}. The existence of a right quasi-antipode requires that $c_L^{a,-a}$ is invertible, and it follows from the coassociativity relations in the proof of Proposition \ref{prop_Vreg} that $c^{a,-a}c^{0,a}=c^{a,0}=1$, so the additional nonzero condition in Proposition \ref{prop_Vreg} is fulfilled, that is, $c_L^{ab}$ and $c_R^{ab}$ are non-zero. On the other hand, since we have a zero action $E \cdot 1= F \cdot \bar{1} =0$, we have $r^{ab}_L=r^{ab}_R=\bar{r}^{ab}_L=\bar{r}^{ab}_R=0$.\\

\item The coefficient of $E e_a \otimes E e_0 \otimes F e_b$ in $(\Delta \otimes \mathrm{Id}) \circ \Delta(F)$ is zero, but the coefficient of the same term in $(\mathrm{Id} \otimes \Delta) \circ \Delta(F)$ is $t_L^{ab}\bar{c}_L^{0,b}c_R^{0,b+2}$. We therefore see that $t_L^{a,b}=0$ since $c_L^{a,b}$ and $c_R^{a,b}$ are non-zero. Similarly, inspecting the coefficients of $E e_a \otimes Fe_0 \otimes Ee_b$, $Ee_a \otimes Fe_0 \otimes Fe_b$, and $Fe_a \otimes Ee_0 \otimes Fe_b$ respectively show that $t_R^{a,b}$, $s_R^{a,b}$, and $s_L^{a,b}$ all vanish. Applying the same argument to $\Delta(E)$ shows that the corresponding coefficients for $\Delta(E)$ also vanish. Therefore, we have
\[ t_L^{a,b}=t_R^{a,b}=s_L^{a,b}=s_R^{a,b}=\bar{t}_L^{a,b}=\bar{t}_R^{a,b}=\bar{s}_L^{a,b}=\bar{s}_R^{a,b}=0 \\ \]

\item We now use the nilpotency of $F$ to further restrict the coefficients:
\begin{align*}
0=\Delta(F)^2
&= \big((F\otimes 1)\sum_{a,b=0}^3 c_L^{ab}(e_a\otimes e_b)
+ (1\otimes F)\sum_{a,b} c_R^{ab}(e_a\otimes e_b)\big)^2\\
&=\sum_{a,b=0}^3 (c_L^{a,b+2}c_R^{a,b}+c_R^{a+2,b}c_L^{a,b})(F\otimes 
F)(e_a\otimes e_b)\\
%\end{align*}
%Now, since we have assumed the coefficients to be non-zero, the coefficients have the form described in Proposition \ref{prop_Vreg}:
%\begin{align*}
%
\end{align*}
Now, since the coefficients are non-zero and have the form described in Proposition \ref{prop_Vreg}, we have:
\begin{align*}
 \quad 0%&\stackrel{!}{=}a_{i,j+2}b_{i,j}+b_{i+2,j}a_{i,j}\\
&=\frac{c_{a+b+2}}{c_{a}}\cdot  \epsilon^a 
(-1)^{a(a-1)/2}\frac{c_{a+b}}{c_{b}}
+\epsilon^{a+2} (-1)^{(a+2)(a+1)/2}\frac{c_{a+b+2}}{c_{b}}\cdot 
\frac{c_{a+b}}{c_a}
\end{align*}
so $\epsilon^2=1$ in Proposition \ref{prop_Vreg}. Similarly 
$\Delta(E)^2=0$ implies $\bar{\epsilon}^2=1$. 
\item We now check the commutation relations for $E$ and $F$, $[\Delta(E),\Delta(F)]=K(e_1+e_3)$.
\begin{align*}
&\Delta(E)\Delta(F)-\Delta(F)\Delta(E)\\
&=\left(\sum_{a',b'}\big((E \otimes 1)  \bar{c}_{L}^{ a'b'}+(1 \otimes 
E)  \bar{c}_{R}^{a'b'}\big)(e^{a'} \otimes e^{b'} )\right)
\cdot\left(\sum_{a,b}\big((F \otimes 1)  c_{L}^{ ab}+(1 \otimes F)  
c_{R}^{ ab}\big)(e^{a} \otimes e^{b} )\right)\\
&-\left(\sum_{a',b'}\big((F \otimes 1)  c_{L}^{ a'b'}+(1 \otimes F)  
c_{R}^{ a'b'}\big)(e^{a'} \otimes e^{b'} )\right)
\cdot \left(\sum_{a,b}\big((E \otimes 1)  \bar{c}_{L}^{ ab}+(1 \otimes 
E)  \bar{c}_{R}^{ab}\big)(e^{a} \otimes e^{b} )\right)\\
&=\sum_{a,b}\big((EF\otimes 1)\bar{c}_L^{a+2,b}c_L^{a,b}
+(F\otimes E)\bar{c}_R^{a+2,b}c_L^{a,b}
+(E\otimes F)\bar{c}_L^{a,b+2}c_R^{a,b}
+(1\otimes EF)\bar{c}_R^{a,b+2}c_R^{a,b}\\
&-(FE\otimes 1)c_L^{a+2,b}\bar{c}_L^{a,b}
-(E\otimes F)c_R^{a+2,b}\bar{c}_L^{a,b}
-(F\otimes E)c_L^{a,b+2}\bar{c}_R^{a,b}
-(1\otimes FE)c_R^{a,b+2}\bar{c}_R^{a,b}\big)(e^{a} \otimes e^{b} )\\
\intertext{and on the other hand}
&\Delta \left(K(e_1+e_3)\right)
=\sum\limits_{a+b=1} Ke_a \otimes Ke_b + \sum\limits_{a+b=3} Ke_a \otimes Ke_b\\
&= \sum\limits_{a,b=0}^3 i^{a+b}(\delta_{2\mid a} \delta_{2 \nmid b}+\delta_{2 \nmid a} \delta_{2 \mid b})e_a \otimes e_b
\end{align*}
The vanishing of the mixed term $F\otimes E$ reads  
\begin{align*}
\bar{c}_R^{a+2,b}c_L^{a,b}&=c_L^{a,b+2}\bar{c}_R^{a,b}\\
\bar{\epsilon}^{a+2} (-1)^{(a+2)(a+1)/2}\frac{\bar{c}_{a+b+2}}{\bar{c}_{b}}
\cdot \frac{c_{a+b}}{c_{a}} 
&=\frac{c_{a+b+2}}{c_{a}}
\cdot \bar{\epsilon}^a (-1)^{a(a-1)/2}\frac{\bar{c}_{a+b}}{\bar{c}_{b}}\\
%
%-\bar{c}_{a+b+2}c_{a+b} 
%&=c_{a+b+2}\bar{c}_{a+b}
%
\Rightarrow\quad -\bar{c}_{t+2}/\bar{c}_{t}&=c_{t+2}/c_t
\end{align*}
so in particular $-\bar{c}_2=c_2$). The vanishing of the mixed term $E\otimes F$ gives the same quasiperiodicity relation. \\
\begin{align*}
\bar{c}_L^{a,b+2}c_R^{a,b}&=c_R^{a+2,b}\bar{c}_L^{a,b} \\
\frac{\bar{c}_{a+b+2}}{\bar{c}_{a}} 
\cdot \epsilon^a (-1)^{a(a-1)/2}\frac{c_{a+b}}{c_{b}}
&=\epsilon^{a+2} (-1)^{(a+2)(a+1)/2}\frac{c_{a+b+2}}{c_{b}}
\cdot \frac{\bar{c}_{a+b}}{\bar{c}_{a}} \\
\end{align*}

\noindent
The term $(EF-FE)\otimes 1 $ requires first that the two coefficients are equal
%c_L^{ab} =\frac{c_{a+b}}{c_{a}} 
%c_R^{ab} = \epsilon^a (-1)^{a(a-1)/2}\frac{c_{a+b}}{c_{b}}
\begin{align*}
\bar{c}_L^{a+2,b}c_L^{a,b}
&=c_L^{a+2,b}\bar{c}_L^{a,b}\\
\frac{\bar{c}_{a+b+2}}{\bar{c}_{a+2}}\frac{c_{a+b}}{c_{a}}  
&=\frac{c_{a+b+2}}{c_{a+2}}\frac{\bar{c}_{a+b}}{\bar{c}_{a}}  
\end{align*}
which is always true by the quasiperiodicity relation. \\

\noindent
The term $1\otimes (EF-FE)$ requires first that the two coefficients are equal
\begin{align*}
\bar{c}_R^{a,b+2}c_R^{a,b}
&=c_R^{a,b+2}\bar{c}_R^{a,b}\\
\bar{\epsilon}^a (-1)^{a(a-1)/2}\frac{\bar{c}_{a+b+2}}{\bar{c}_{b+2}}
\cdot \epsilon^a (-1)^{a(a-1)/2}\frac{c_{a+b}}{c_{b}}
&=\epsilon^a (-1)^{a(a-1)/2}\frac{c_{a+b+2}}{c_{b+2}}
\cdot \bar{\epsilon}^a (-1)^{a(a-1)/2}\frac{\bar{c}_{a+b}}{\bar{c}_{b}}
\end{align*} 
which is always true by the quasiperiodicity relation. \\

Now we apply $[E,F]=K(e_1+e_3)$ which acts on $e_a$ by $i^a\delta_{2\nmid a}$ and compute the joint result of $(EF-FE)\otimes 1$ and $1\otimes (EF-FE)$, then the final equation to be fulfilled is:
\begin{align*}
i^a\delta_{2\nmid a}
\cdot \frac{\bar{c}_{a+b+2}}{\bar{c}_{a+2}}\frac{c_{a+b}}{c_{a}}  
+i^b\delta_{2\nmid b}
\cdot \bar{\epsilon}^a\frac{\bar{c}_{a+b+2}}{\bar{c}_{b+2}}
\cdot \epsilon^a\frac{c_{a+b}}{c_{b}}
&=i^a\delta_{2\mid a}i^b\delta_{2\nmid b}+i^a\delta_{2\nmid a}i^b\delta_{2\mid b}
\end{align*}
For $2\mid a,b$ both sides are zero. For $2\nmid a,b$ 
\begin{align*}
i^a
\cdot \frac{\bar{c}_{a+b+2}}{\bar{c}_{a+2}}\frac{c_{a+b}}{c_{a}}  
+i^b (\epsilon\bar{\epsilon})^a
\cdot\frac{\bar{c}_{a+b+2}}{\bar{c}_{b+2}}
\frac{c_{a+b}}{c_{b}}
&=0 \\
i^a
\cdot \frac{1}{\bar{c}_{a+2}c_{a}}  
+i^b(\epsilon\bar{\epsilon})^a
\cdot\frac{1}{\bar{c}_{b+2}c_{b}}
&=0
\end{align*} 
which for $a=b$ (odd) means $\epsilon\bar{\epsilon}$ and which for $b=a+2$ means $\bar{c}_{1}c_3=-\bar{c}_{3}c_1$ so it holds again by the quasiperiodicity relation.\\

\noindent
For $2\nmid a,2\mid b$ resp.  $2\mid a,2\nmid b$ 
\begin{align*}
\frac{\bar{c}_{a+b+2}}{\bar{c}_{a+2}}\frac{c_{a+b}}{c_{a}}  
&=i^b\\
\frac{\bar{c}_{a+b+2}}{\bar{c}_{b+2}}
\frac{c_{a+b}}{c_{b}}
&=i^a 
\end{align*}
which are equivalent after switching $a,b$, so we continue with the first. The first relation always holds for $b=0$, and the case $b=2$ always holds by the quasiperiodicity relation.
\end{enumerate}

\noindent
The previous steps thus yield:

\begin{proposition}\label{prop_EF}
     The coproduct is necessarily
     \begin{align*}
     \Delta(F)
     &=\sum\limits_{a,b \in \{\pm \}}\big( (F \otimes 1) c_{L}^{ ab}+(1 
\otimes F)  c_{R}^{ ab}\big)(e_a\otimes e_b)\\
     \Delta(E)
     &=\sum\limits_{a,b \in \{\pm \}}\big( (E \otimes 1) \bar{c}_{L}^{ 
ab}+(1 \otimes E)  \bar{c}_{R}^{ab}\big)(e_a\otimes e_b)
     \end{align*}
     where $c_{L}^{ ab}, c_{R}^{ ab}$ and  $\bar{c}_{L}^{ 
ab},\bar{c}_{R}^{ ab}$ are defined in terms of 
$c_a,\epsilon,\bar{c}_a,\bar{\epsilon}$ as in Equations \eqref{cL} and \eqref{cR} under the following additional assumptions:
     $$\epsilon^2=\bar{\epsilon}^2=1,\quad 
\epsilon\bar{\epsilon}=-1,\quad c_{t+2}/c_t=-\bar{c}_{t+2}/\bar{c}_{t}$$
     Conversely, every such coproduct defines a quasi-bialgebra 
structure on $U$ with respect to the associator $\Phi=\Phi_C$.
\end{proposition}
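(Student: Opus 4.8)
The plan is to read Proposition \ref{prop_EF} as the distillation of the computations a)--e) carried out above for the direction of necessity, and to supply a short verification for the converse. For necessity I would simply organize those five steps in order. First I fix the shape of $\Delta(E),\Delta(F)$ using that $K$ acts adjointly by $-1$, so that every summand is odd in $E,F$ (step a). Next I evaluate $\Delta(F)\cdot\delta(1)=\delta(F\cdot 1)$ and the analogous identity for $E$ against the bimodule coalgebras of Proposition \ref{prop_Vreg}; this pins down $c_L,c_R,\bar c_L,\bar c_R$ in the stated form, while the vanishing actions $E\cdot 1=0$, $F\cdot\bar 1=0$ remove the $r$-type coefficients (step b). Comparing the mixed triple-tensor coefficients of $(\Delta\otimes\mathrm{Id})\circ\Delta$ and $(\mathrm{Id}\otimes\Delta)\circ\Delta$ then annihilates the quadratic $t$- and $s$-type terms (step c). Imposing $\Delta(F)^2=\Delta(E)^2=0$ gives $\epsilon^2=\bar\epsilon^2=1$ (step d), and expanding $[\Delta(E),\Delta(F)]=\Delta(\tfrac{1}{2}(K-K^{-1}))$ produces the remaining constraints $\epsilon\bar\epsilon=-1$ and $c_{t+2}/c_t=-\bar c_{t+2}/\bar c_t$ (step e).

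For the converse I would reverse the implications and check that the displayed coproducts, subject to the three constraints, genuinely define a quasi-bialgebra with $\Phi=\Phi_C$. The key point is that $\Delta$ extends to a well-defined algebra homomorphism, i.e. that it preserves each relation \eqref{Uq1}--\eqref{Uq2}: $\Delta(K)=K\otimes K$ and $K^4=1$ are immediate, $KE=-EK$ and $KF=-FK$ hold because the Ansatz is odd in $E,F$, the relations $E^2=F^2=0$ follow from $\epsilon^2=\bar\epsilon^2=1$ (step d run backwards), and $[E,F]=\tfrac{1}{2}(K-K^{-1})$ follows from $\epsilon\bar\epsilon=-1$ together with quasiperiodicity (step e run backwards). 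Quasi-coassociativity on $K$ is automatic, and on $E,F$ it is exactly the coassociativity of $\delta,\bar\delta$ established in Proposition \ref{prop_Vreg}, since by Theorem \ref{Catthm} the relevant $U$-$C$-bimodule associator equals $\Phi_C$. Combined with the counit normalizations $c_L^{a,0}=c_R^{0,b}=1$ and the pentagon identity for $\Phi_C$ (valid because $C$ is a quasi-Hopf algebra), coassociativity on the generators $K,E,F$ then propagates to all of $U$ by multiplicativity of $\Delta$.

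The hard part, I expect, is not the necessity but making the converse airtight: one must confirm that the three constraints are jointly sufficient and do not conceal a further relation among the free parameters $c_a,\bar c_a,\epsilon,\bar\epsilon$. The delicate point is that the coassociativity calculation was performed on $\V(C_\reg)$, where $E$ acts as zero, so I would have to confirm that full quasi-coassociativity of $\Delta$ survives once the genuine $E$- and $F$-actions are reinstated. Concretely this means verifying, as in step e), that the two mixed terms $F\otimes E$ and $E\otimes F$ and the two diagonal terms $(EF-FE)\otimes 1$, $1\otimes(EF-FE)$ are all matched consistently against $\Delta(\tfrac{1}{2}(K-K^{-1}))$ with no overdetermination---this is precisely where quasiperiodicity does its work, and where one must check that the resulting system is neither inconsistent nor under-constrained.
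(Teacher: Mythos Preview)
Your proposal is correct and matches the paper's approach exactly: the necessity direction is precisely the sequence of steps a)--e) that precede the proposition in the text, and the converse (which the paper states but leaves implicit) is indeed obtained by reversing those implications as you outline. One minor clarification for your final paragraph: your concern about coassociativity on $\V(C_\reg)$ versus full quasi-coassociativity of $\Delta$ dissolves once you observe that the final $\Delta(F)$ contains no $E$-terms (and symmetrically $\Delta(E)$ contains no $F$-terms), so the computation in Proposition~\ref{prop_Vreg} lifts verbatim to $U^{\otimes 3}$ with $\Phi_U=\Phi_C$ acting by multiplication on both sides; the mixed-term verification you then describe from step e) is a separate check---that $\Delta$ respects the commutator relation and is thus a well-defined algebra map---rather than an instance of coassociativity.
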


\noindent We now apply an algebra automorphism to remove the parameters 
$c_a,\bar{c}_a$:\\

\begin{proposition}~\label{prop_automorphism}
     \begin{enumerate}[a)]
         \item For any $x_0=1,x_1,x_2,x_3\in\C^\times$ the element 
$F':=F(\sum_a x_ae_a)$ also fulfills  the coproduct formula in 
Proposition \ref{prop_Vreg}, with modified parameters $c'_a=c_ax_a$.
         \item The assignment
         \begin{align*}
         K&\mapsto K\\
         E&\mapsto E':=E(\sum_a \bar{x}_a e_a)\\
         F&\mapsto F':=F(\sum_a x_a e_a)
         \end{align*}
         extends to an algebra automorphism of $U$ iff
         $$\bar{x}_1=x_3^{-1},\quad
         \bar{x}_2=x_2,\quad
         \bar{x}_3=x_1^{-1}$$
         \item Applying the algebra automorphism for $x_i=c_i$ yields 
the coproducts
         \begin{align*}
         \Delta(F)&=F\otimes 1+\omega_{-\epsilon}\otimes F\\
         \Delta(E)&=(E\otimes 1)(\sum_{a,b}(\bar{c}')_L^{ab}e_a\otimes 
e_b)+(\omega_{\epsilon}\otimes E)
         (\sum_{a,b}(\bar{c}')_L^{ba}e_a\otimes e_b)\\
\omega_{\epsilon}&:=\sum_a(-\epsilon)^a(-1)^{a(a-1)/2}e_a=((e_0+e_2)+i\epsilon 
(e_1+e_3))K \\
         (\bar{c}')_L^{ab}&= \begin{pmatrix}
         1 & d & -1 & -d \\
         1 & -d^{-1} & -1 & d^{-1} \\
         1 & d & -1 & -d \\
         1 & -d^{-1} & -1 & d^{-1} \\
         \end{pmatrix}
         \end{align*} 
     %INGO VANDA AZAT say F=f^+, E=Kf^- , [f+,f-]=e_1
     %Delta(F)=F\otimes 1+ (e_0+ie_1)K\otimes F
     %Delta(E)=E\otimes K+ (e_0-ie_1)K^2\otimes E
     %[E,F]=Kf^-f^+ - f^+Kf^- = Ke_1-Kf^+f^- + Kf^-f^+ = Ke^1
     %SO WE NEED d=9
     \end{enumerate}
\end{proposition}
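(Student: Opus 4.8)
The proof splits along the three parts, which I would carry out in order; only (c) involves real bookkeeping.

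\emph{Part (a)} is a direct rescaling. Since $e_ae_b=\delta_{ab}e_a$ and $K$ acts on $e_a$ by $i^a$, one has $F'\cdot e_k=x_k\,(F\cdot e_k)$, whence $(F'\otimes 1)(e_a\otimes e_b)=x_a\,(F\otimes 1)(e_a\otimes e_b)$ and $(1\otimes F')(e_a\otimes e_b)=x_b\,(1\otimes F)(e_a\otimes e_b)$. I would take the fixed coproduct $\delta$ of Proposition \ref{prop_Vreg}, substitute $(F\otimes 1)=x_a^{-1}(F'\otimes 1)$ and $(1\otimes F)=x_b^{-1}(1\otimes F')$ term by term, and read off that the left and right coefficients acquire factors $x_{a+b}/x_a$ and $x_{a+b}/x_b$. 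By \eqref{cL} and \eqref{cR} this is exactly the substitution $c_a\mapsto c_ax_a$ with $\epsilon$ unchanged, proving the claim.

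\emph{Part (b).} I would verify that $\phi\colon K\mapsto K,\ F\mapsto F':=F\sum_a x_ae_a,\ E\mapsto E':=E\sum_a\bar x_ae_a$ respects \eqref{Uq1}--\eqref{Uq2}. With $X=\sum_ax_ae_a,\ \bar X=\sum_a\bar x_ae_a$, the relations $KF'=-F'K$, $KE'=-E'K$ hold because $X,\bar X$ are polynomials in $K$, and $(F')^2=(E')^2=0$ follow from $F^2=E^2=0$ together with the commutation $e_aF=Fe_{a+2}$, $e_aE=Ee_{a+2}$ (a consequence of \eqref{Uq1}). The only substantial relation is $[E',F']=\tfrac12(K-K^{-1})$. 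Using $e_aF=Fe_{a+2}$ one computes
\[ E'F'=EF\sum_b \bar x_{b+2}x_b\,e_b,\qquad F'E'=FE\sum_b x_{b+2}\bar x_b\,e_b, \]
and subtracting, with $FE=EF-\tfrac12(K-K^{-1})$, gives $\sum_b(\bar x_{b+2}x_b-x_{b+2}\bar x_b)\,EF\,e_b+\sum_b\tfrac12(i^b-i^{-b})(x_{b+2}\bar x_b-1)\,e_b=0$. In the PBW basis $\{K^aE^iF^j\}$ the elements $EF\,e_b=e_b\,EF$ (in the $K^cEF$-sector) and $e_b$ (in the $K^c$-sector) are linearly independent, and each $EF\,e_b\neq0$; hence the equation forces $\bar x_{b+2}x_b=x_{b+2}\bar x_b$ for all $b$ and $x_{b+2}\bar x_b=1$ for odd $b$, which are equivalent to $\bar x_1=x_3^{-1}$, $\bar x_2=x_2$, $\bar x_3=x_1^{-1}$. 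Conversely these make all relations hold, and $\phi$ is invertible (inverse scaling by $x_a^{-1},\bar x_a^{-1}$), so it is an automorphism.

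\emph{Part (c).} I would apply the automorphism of (b) with the scaling that trivialises the parameters $c_a$ of $\Delta(F)$ (the choice $x_i=c_i$ of the statement): by part (a) this sends $c_a'\equiv 1$, so $\Delta(F)=F\otimes1+\omega_{-\epsilon}\otimes F$. The same automorphism transports $\Delta(E)$; by the $E$-analogue of (a) its left/right parameters become $\bar c_a'=\bar c_a\bar x_a$ with the $\bar x_a$ forced by (b). Feeding in the two constraints of Proposition \ref{prop_EF}, namely $\bar c_{t+2}/\bar c_t=-c_{t+2}/c_t$ and $\epsilon\bar\epsilon=-1$, collapses these to $\bar c_0'=1$, $\bar c_2'=-1$, $\bar c_3'=-\bar c_1'$, leaving the single free scalar $d:=\bar c_1'$. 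Substituting the surviving values into \eqref{cL}--\eqref{cR}, and rewriting the right coefficients through $\omega_\epsilon$ (which converts $\bar c_R$ into the transposed entries $(\bar c')_L^{ba}$), yields precisely the displayed $4\times4$ matrix, completing the proof.

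\emph{Main obstacle.} The delicate step is the bookkeeping in (c): tracking how the $F$-scaling and the $\bar x_a$ it forces act simultaneously on all eight coefficients of $\Delta(E)$, and checking that the constraints of Proposition \ref{prop_EF} really leave only $d$ and reproduce the exact matrix. A secondary point to nail down is the linear-independence claim in (b): one must confirm through the PBW basis that $EF\,e_b\neq0$ for every $b$ (even though $EF$ annihilates all Verma highest-weight vectors of even weight), so that the commutator equation genuinely forces each of the three stated conditions rather than a proper subset.
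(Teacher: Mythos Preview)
Your proof is correct and follows essentially the same route as the paper. In part (a) you rescale term by term, in part (b) you reduce to the PBW basis after substituting $FE=EF-\tfrac12(K-K^{-1})$, and in part (c) you transport the parameters through the forced $\bar x_a$ and collapse using the quasiperiodicity relation of Proposition~\ref{prop_EF}; the paper does exactly the same, only in (b) it compares the $EF$- and $FE$-coefficients directly rather than first rewriting in the PBW basis. Your worry about $EFe_b\neq 0$ is easily settled: since $EF$ commutes with $K$ and hence with each $e_b$, the elements $EFe_b$ are the images of the PBW basis vectors $K^jEF$ under the invertible discrete Fourier transform, so they are nonzero and linearly independent for all $b$.

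One small slip to fix: you write ``the choice $x_i=c_i$ of the statement \ldots\ by part (a) this sends $c_a'\equiv 1$''. By part (a) one has $c_a'=c_ax_a$, so $x_a=c_a$ gives $c_a'=c_a^2$, not $1$. The normalisation you want (and which the paper's own proof uses) is $x_a=c_a^{-1}$; the ``$x_i=c_i$'' in the statement is a typo. This does not affect your argument once corrected, and the resulting values $\bar c_1'=\bar c_1 c_3=:d$, $\bar c_2'=-1$, $\bar c_3'=-d$ and the displayed matrix follow exactly as you outline.
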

\begin{example} 
	In particular for $d=\pm i, \epsilon=1$ the quasi-bialgebra above is the quasi-Hopf algebra in \cite{FGR2} Section 3.1
	\begin{align*}
	\Delta(F)&=F\otimes 1+\omega_{-}\otimes F\\
	\Delta(E)&=E\otimes K^{\pm 1}+\omega_{+}K^{\pm 1}\otimes E\\
	\omega_{\pm}&:=\sum_a(-\epsilon)^a(-1)^{a(a-1)/2}=((e_0+e_2)\pm i\epsilon 
	(e_1+e_3))K
	\end{align*} 
	with the conventions $F=\mathsf{f^-},E=K\mathsf{f^+}$ and central idempotents $e_0+e_2=\mathsf{e_0},e_1+e_3=\mathsf{e_2}$.
	
	For $\epsilon=-1$ we get an isomorphic quasi-Hopf algebra with $\mathsf{f^-},\mathsf{f^+}$ switched, and for $d=-i$ we get  an isomorphic quasi-Hopf algebra  with switched $K,K^{-1}$ and a different initial coassociator $\Phi$ for $\beta'=i\beta$.
	%Delta(f^-)=f^-\otimes 1+omega_-\otimes f^-
	%Delta(Kf^+)=Kf^+\otimes K+omega_+K\otimes Kf^+
	%[Kf^+,f^-]=Kf^+f^- - f^-Kf^+= K(f^+f^- + f^-f^+)=K(e_1+e_3)
	
\end{example}
\begin{proof}[Proof of Proposition \ref{prop_automorphism}]
\begin{enumerate}[a)]
     \item By multiplicativity of the coproduct
     \begin{align*}
     \Delta(F(\sum_a x_ae_a))
     &=(F\otimes 1)\sum_{a,b} c_L^{ab}(e_a\otimes e_b)(\sum_{a',a''} 
x_{a'+a''} e_{a'}\otimes e_{a''}) \\
     &+ (1\otimes F)\sum_{a,b} c_R^{ab}(e_a\otimes e_b)(\sum_{a',a''} 
x_{a'+a''} e_{a'}\otimes e_{a''})\\
     &=(F(\sum_a x_ae_a)\otimes 1)\sum_{a,b} 
c_L^{ab}\frac{x_{a+b}}{x_a}(e_a\otimes e_b)\\
     &+ (1\otimes F(\sum_b x_ae_b))\sum_{a,b} 
c_R^{ab}\frac{x_{a+b}}{x_b}(e_a\otimes e_b)
     \end{align*}
     and $c_L^{ab},c_R^{ab}$ are proportional to 
$\frac{c_{a+b}}{c_a},\frac{c_{a+b}}{c_b}$ respectively.\\

     \item Nilpotency and commutation relations with $K$ surely hold for 
the new $E',F'$, it remains to check $[E',F']=Ke_1$:
     \begin{align*}
     [E',F']
     &=E(\sum_a \bar{x}_a e_a)F(\sum_a {x}_a e_a)
     -F(\sum_a {x}_a e_a)E(\sum_a \bar{x}_a e_a)\\
     &=EF(\sum_a \bar{x}_{a+2}x_a e_a)
     -FE(\sum_a {x}_{a+2}\bar{x}_a e_a)
     \end{align*}
     The two factors are equal iff 
$\bar{x}_{a+2}/\bar{x}_{a}={x}_{a+2}/{x}_{a}$, so iff $\bar{x}_2=x_2$ 
and $\bar{x}_3=\bar{x}_1(x_3/x_1)$, and they are equal to $1$ for 
$2\nmid a$ (where $[E,F]\neq0$) iff $\bar{x}_{a+2}x_a=1$, so iff 
$\bar{x}_{3}x_1=1$ and $\bar{x}_{1}x_3=1$, which then holds automatically.\\

     \item If we set $x_i=c_i^{-1}$ the coproduct of $F'$ is as asserted
     $$\Delta(F')=\sum_{a,b}\big((F\otimes 1)+(1\otimes 
F)\epsilon^a(-1)^{a(a-1)/2}\big)(e_a\otimes e_b)$$
     On the other hand by the choice of $\bar{x}_i$ in b):
     $$\bar{c}_1'=\bar{c}_1c_3,\;
     \bar{c}_2'=\bar{c}_2/c_2,\;
     \bar{c}_3'=\bar{c}_3c_1$$
     If we introduce $d=\bar{c}_1c_3$ then by the quasiperiodicity 
relation $-d=\bar{c}_3c_1$, on the other hand $\bar{c}_2/c_2=-1$. Going 
through all $a,b$ gives the matrix
     $$(\bar{c}')_L^{ab}=\frac{\bar{c}'_{a+b}}{\bar{c}'_a}
     = \begin{pmatrix}
     1 & d & -1 & -d \\
     1 & -d^{-1} & -1 & d^{-1} \\
     1 & d & -1 & -d \\
     1 & -d^{-1} & -1 & d^{-1} \\
     \end{pmatrix}$$
\end{enumerate}
\end{proof}

%QUESTION: Reversed braiding
\begin{proposition}\label{braiding}
	The previous bialgebra for $\epsilon=1$ has an $R$-matrix iff $d=\pm i$, and it is of the form 
	\begin{align*}
	R
	&=\sum\limits_{a,b =0}^3\bigg( R^{a,b}(1 \otimes 1)+R^{a,b}_{F,E}(F \otimes E)\bigg)(e^a \otimes e^b)
	R^{a,b}
	&=\begin{pmatrix}
	1 & 1 & 1 & 1 \\
	1 & \beta & 1 & \beta \\
	1 & -1 & -1 & 1 \\
	1 & -\beta & -1 & \beta
	\end{pmatrix},\\
	R_{F,E}^{a,b}
	&=2\begin{pmatrix}
	\mp 1 & -i & \mp 1 & -i \\
	\mp 1 & -i\beta & \mp 1 & -i\beta \\
	\mp 1 & i & \pm  & -i \\
	\mp 1 & i\beta & \pm 1 & -i\beta \\
	\end{pmatrix}
	\end{align*}
\end{proposition}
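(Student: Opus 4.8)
The plan is to classify all invertible $R\in U\otimes U$ for which the quasi-bialgebra $(U,\Delta,\Phi_C)$ of Proposition \ref{prop_automorphism} with $\epsilon=1$ becomes quasi-triangular, i.e.\ $\Delta^{\op}(x)R=R\Delta(x)$ for all $x\in U$ together with the two hexagon identities relating $R$ and $\Phi=\Phi_C$, which in Drinfeld's convention read
\[ (\Delta\otimes\id)(R)=\Phi_{231}R_{13}\Phi_{132}^{-1}R_{23}\Phi_{123},\qquad (\id\otimes\Delta)(R)=\Phi_{312}^{-1}R_{13}\Phi_{213}R_{12}\Phi_{123}^{-1}. \]
Since $E^2=F^2=0$, I would work throughout in the PBW basis $\{1,E,F,EF\}\cdot C$ and expand each identity into scalar equations indexed by the idempotents $e_a\otimes e_b$. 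First I would pin down the shape of $R$: applying the intertwining axiom to $x=K$ and using $\Delta(K)=\Delta^{\op}(K)=K\otimes K$ shows $R$ commutes with $K\otimes K$; as $K$ conjugates both $E$ and $F$ by $-1=i^2$, a monomial $(X\otimes Y)(e_a\otimes e_b)$ survives only when $X,Y$ both lie in $\{1,EF\}$ or both in $\{E,F\}$. This leaves finitely many scalar unknowns, and I would normalize the diagonal ($1\otimes1$) part to be the Cartan $R$-matrix $R^{a,b}$, since we are extending the braiding of $\Rep(C)$ along $C\subset U$.

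Next I would impose the intertwining relations for $x=F$ and $x=E$. The relation for $F$ is independent of $d$; expanding it and using that the diagonal part of $R$ is invertible forces the coefficients of $E\otimes E$, $F\otimes F$, $E\otimes F$, $EF\otimes1$, $1\otimes EF$ and $EF\otimes EF$ to vanish, leaving exactly the diagonal term together with a single off-diagonal family $R^{ab}_{F,E}(F\otimes E)(e_a\otimes e_b)$, as in the statement. The relation for $E$ carries the parameter $d$ through the matrix $(\bar c')^{ab}_L$ of Proposition \ref{prop_automorphism} and produces a recursion that determines the $R^{ab}_{F,E}$ across the idempotents in terms of $d$ and $\beta$.

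I would then substitute this reduced ansatz into the two hexagon identities, tracking the $\mp\beta^2$ phases that $\Phi_C$ assigns to the idempotents in \eqref{Ccoass}. Here the cross term $R_{13}R_{23}$ feeds the $F\otimes E$ coefficient back into the left-hand side $(\Delta\otimes\id)(R)$, so matching against $\Delta(E)$ yields, together with the recursion from the previous step, an over-determined system for the $R^{ab}_{F,E}$. I expect this combined system to be solvable precisely when $d^2=-1$: for $d=\pm i$ it is consistent and pins $R^{ab}_{F,E}$ to the displayed matrix, the overall sign $\mp$ labelling the two roots, whereas for $d\neq\pm i$ it has no solution and no $R$-matrix exists.

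The \emph{main obstacle} is exactly this necessity direction: one must show not only that $d=\pm i$ admits a compatible $R$, but that the phases contributed by the nontrivial associator $\Phi_C$ cannot be absorbed into any choice of the finitely many correction coefficients unless $d^2=-1$. Concretely, the difficulty is to verify that the linear system for the $R^{ab}_{F,E}$ is genuinely inconsistent off the two roots, rather than merely harder to solve. Once a candidate $R$ is in hand for $d=\pm i$, checking the three quasi-triangularity axioms and that the double braiding $R_{21}R$ restricts to the Cartan monodromy $R^{a,b}R^{b,a}$ is a finite, direct verification.
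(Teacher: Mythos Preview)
Your overall strategy---restrict $R$ via $K$-compatibility, impose the intertwining relations for $F$ and $E$, then the hexagons---matches the paper's, but one step is over-claimed and does not go through. You assert that the intertwining relation $R\Delta(F)=\Delta^{\op}(F)R$, together with invertibility of the diagonal part, forces the coefficients of $F\otimes F$, $E\otimes E$, $E\otimes F$, $EF\otimes 1$, $1\otimes EF$ and $EF\otimes EF$ all to vanish. This fails already for $R_{F,F}$: since $F^2=0$, the term $R^{ab}_{F,F}(F\otimes F)(e_a\otimes e_b)$ is annihilated by multiplication with $F\otimes 1$ or $1\otimes F$ on either side, so it contributes nothing to either side of the $F$-intertwining relation and is completely unconstrained by it. Likewise $(EF\otimes EF)\cdot\Delta(F)=0$, so $R_{EF,EF}$ enters only through $\Delta^{\op}(F)\cdot(EF\otimes EF)$, producing $EF\otimes F$ and $F\otimes EF$ terms that mix with contributions from $R_{E,F}$, $R_{F,E}$, $R_{1,EF}$, $R_{EF,1}$; nothing forces it to vanish here. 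And for $R_{E,F}$ the $F$-intertwining relation gives only a recursion in $a$ plus a determination on the odd-$a$ rows via the $[E,F]$ contribution---the even-$a$ rows remain free.

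In the paper the unwanted terms are eliminated by different mechanisms and in a different order: the hexagon $(\Delta\otimes\id)(R)=\cdots R_{13}\cdots R_{23}\cdots$ is invoked \emph{before} the $E,F$-intertwining to kill $R_{EF,1}$ and $R_{1,EF}$ (the monomial $E\otimes F\otimes 1$ cannot arise in $R_{13}R_{23}$); the intertwining relations for $F$ and for $E$ each give only partial, complementary information on $R_{E,F}$ and $R_{F,E}$, which combined still leave two free scalars $X,Y$; and the hexagons are then revisited to force $X=0$ (hence $R_{E,F}=0$), $R_{EF,EF}=0$, $Y=di$, and---only in the second hexagon, comparing the $F\otimes E\otimes 1$ component---the constraint $d^2=-1$. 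So the hexagons are needed both before and after the intertwining step, not merely as a final consistency check, and your reduction to the single $F\otimes E$ family prior to imposing them is not justified.
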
 
\begin{proof}
	An arbitrary element in $u_q \otimes u_q$ is a $u^0_q$-combination of $E^iF^j \otimes E^kF^l$ with $i,j,k,l \in \{0,1\}$. We restrict the form in the following steps (the order of which chosen to minimize terms in computations) 
	\begin{enumerate}[a)]
		\item Compatibility of the $R$-matrix with the coproduct ($R\Delta_{U(e_0)}(K)=\Delta^{op}(K)R$) means, since $XK=-KX$ for $X \in \{E,F\}$,  that for each term in $R$, we must have $2|(i+j+k+l)$. Therefore,
		\begin{align*}
		R&=\sum\limits_{a,b =0}^3\bigg( R^{a,b}(1 \otimes 1)+R^{a,b}_{E,F}(E \otimes F)+R_{F,E}^{a,b}(F \otimes E)+R^{a,b}_{E,E}(E \otimes E)+R^{a,b}_{F,F}(F \otimes F)\\
		&+R_{EF,1}^{a,b}(EF \otimes 1)+R_{1,EF}^{a,b}(1 \otimes EF)+R_{EF,EF}^{a,b}(EF \otimes EF) \bigg)(e^a \otimes e^b)
		\end{align*} 
		\item Counitality $(\epsilon\otimes 1)(R)=(1\otimes\epsilon)(R)=1$ implies  
		$$R^{0b}=R^{a0}=1,\quad R_{EF,1}^{a0}=R_{1,EF}^{0b}=0$$
		\item The functors $\V,\overline{\V}$ are braided, and the tensor product $\V(\C_a)\otimes \V(\C_b)$ decomposes into the images, 
		%\marginparIf{Thomas argument??}
		 so $R^{a,b}=\sigma(a,b)$ is the braiding of $C$: 
		% FGR Formula 3.14-3.15
		%		$$[]\sum_{n,m=0}^1 \beta^{nm}\rho_{n,m} e_n\otimes e_m,
		%		\qquad \rho_{n,m}=\frac{1}{2}\sum_{i,j=0}^1(-1)^{ij}\mathrm{i}^{-in+jm}K^i\otimes K^j,\qquad
		%		\beta=e^{2\pi i (1/8)}$$
%		$$R^{a,b}=\sigma(a,b)=\begin{pmatrix}
%		1 & 1 & 1 & 1 \\
%		1 & \beta & 1 & \beta \\
%		1 & -1 & -1 & 1 \\
%		1 & -\beta & -1 & \beta 
%		\end{pmatrix},
%		\qquad
%		R^{a,b}R^{b,a}=\begin{pmatrix}
%		1 & 1 & 1 & 1 \\
%		1 & \beta^2 & -1 & -\beta^2 \\
%		1 & -1 & 1 & -1 \\
%		1 & -\beta^2  & -1 & \beta^2 
%		\end{pmatrix}$$
		\item Writing out the left hexagon identity $(\Delta\otimes 1)(R)=R_{13}R_{23}$ implies $$R^{a,b}_{EF,1}=R^{a,b}_{1,EF}=0$$ 
		because the terms $E\otimes F\otimes 1$ on the left hand side cannot appear on the right hand side, and similarly for the right hexagon. We later write out the full hexagon identity.
		\item Compatibility of the $R$ matrix with the coproduct of $F$ reads, using also $FE=EF-e_1K$:	
		%WITH EF,1 and 1,EF term	
		%		\begin{align*}
		%		R\Delta(F)&=\sum\limits_{a,b}\bigg( R^{a+2,b}c_L^{ab}(F \otimes 1)+R_{E,F}^{a+2,b}c_L^{ab}(EF \otimes F)+R_{1,EF}^{a+2,b}c_L^{ab}(F \otimes EF)\\
		%		&+R^{a,b+2}c_R^{ab}(1 \otimes F)+R_{F,E}^{a,b+2}c_R^{ab}(F \otimes EF) +R_{EF,1}^{a,b+2}c_R^{ab}(EF \otimes F) \bigg) (e^a \otimes e^b)\\
		%		\Delta(F)^{\tau}R
		%		&=\sum\limits_{a,b} \bigg( c_L^{ba}R^{a,b}(1 \otimes F)+c_L^{b+2,a+2}R_{F,E}^{a,b}(F \otimes (EF-e_1K))+c_R^{ba}R_{EF,1}^{a,b}(EF \otimes F)\\
		%		&+c_R^{ba}R^{a,b}(F \otimes 1)+c_R^{b+2,a+2}R_{E,F}^{a,b}((EF-e_1K) \otimes F)+c_R^{ba}R^{a,b}_{1,EF}(F \otimes EF) \bigg) (e^a \otimes e^b)
		%		\end{align*} 
		%		Comparing coefficients then gives the following relations
		%		\begin{align*}
		%		%F\otimes 1
		%		R^{a+2,b}c_L^{ab}&=c_R^{ba}R^{a,b}+c_R^{b+2,a+2}R_{E,F}^{a,b}(-\delta_{2\nmid a}i^a)\\
		%		%1\otimes F
		%		R^{a,b+2}c_R^{ab} &=c_L^{ba}R^{a,b}+c_L^{b+2,a+2}R_{F,E}^{a,b}(-\delta_{2\nmid b}i^b)\\
		%		%EF \otimes F
		%		R_{E,F}^{a+2,b}c_L^{ab} + R_{EF,1}^{a,b+2}c_R^{ab}
		%		&= c_R^{ba}R_{EF,1}^{a,b} + c_R^{b+2,a+2}R_{E,F}^{a,b}
		%		 \\
		%		 %F \otimes EF
		%		R_{1,EF}^{a+2,b}c_L^{ab} +R_{F,E}^{a,b+2}c_R^{ab}
		%		&=c_L^{b+2,a+2}R_{F,E}^{a,b}+c_R^{ba}R^{a,b}_{1,EF}
		%		\end{align*}
		\begin{align*}
		R\Delta(F)&=\sum\limits_{a,b}\bigg( R^{a+2,b}c_L^{ab}(F \otimes 1)+R_{E,F}^{a+2,b}c_L^{ab}(EF \otimes F)\\
		&+R^{a,b+2}c_R^{ab}(1 \otimes F)+R_{F,E}^{a,b+2}c_R^{ab}(F \otimes EF) \bigg) (e^a \otimes e^b)\\
		\Delta(F)^{\tau}R
		&=\sum\limits_{a,b} \bigg( c_L^{ba}R^{a,b}(1 \otimes F)+c_L^{b+2,a+2}R_{F,E}^{a,b}(F \otimes (EF-e_1K))\\
		&+c_R^{ba}R^{a,b}(F \otimes 1)+c_R^{b+2,a+2}R_{E,F}^{a,b}((EF-e_1K) \otimes F) \bigg) (e^a \otimes e^b)
		\end{align*} 
		Comparing coefficients then gives the following relations
		\begin{align*}
		%F\otimes 1
		R^{a+2,b}c_L^{ab}&=c_R^{ba}R^{a,b}+c_L^{b+2,a+2}R_{F,E}^{a,b}(-\delta_{2\nmid b}i^b)\\
		%1\otimes F
		R^{a,b+2}c_R^{ab} &=c_L^{ba}R^{a,b}+c_R^{b+2,a+2}R_{E,F}^{a,b}(-\delta_{2\nmid a}i^a)\\
		%EF \otimes F
		R_{E,F}^{a+2,b}c_L^{ab} 
		&= c_R^{b+2,a+2}R_{E,F}^{a,b}
		\\
		%F \otimes EF
		R_{F,E}^{a,b+2}c_R^{ab}
		&=c_L^{b+2,a+2}R_{F,E}^{a,b}
		\end{align*}
		
		We now assume the standard form of the coproduct achieved in Proposition \ref{prop_automorphism} $$c_L^{ab}=1, c_R^{ab}=\epsilon^a(-1)^{a(a-1)/2}, \epsilon=1$$ 
		%(so $F=f^-$ in \cite{FGR}).
		then the previous relations read explicitly
		\begin{align*}
		%F\otimes 1
		R_{F,E}^{a,b}(-\delta_{2\nmid b}i^b)
		&=R^{a+2,b}-R^{a,b}(-1)^{b(b-1)/2}=2\begin{pmatrix}
		0 & -1 & 0 & 1 \\
		0 & -\beta & 0 & \beta \\
		0 & 1 & 0 & 1 \\
		0 & \beta & 0 & \beta \\
		\end{pmatrix}\\
		R_{F,E}^{a,b}
		&=2\begin{pmatrix}
		* & -i & * & -i \\
		* & -i\beta & * & -i\beta \\
		* &  i& * & -i \\
		* &  i\beta & * & -i\beta \\
		\end{pmatrix}\\
		%1\otimes F
		R_{E,F}^{a,b}(\delta_{2\nmid a}i^a) &=R^{a,b}-R^{a,b+2}(-1)^{a(a-1)/2}
		=\begin{pmatrix}
		0 & 0 & 0 & 0 \\
		0 & 0 & 0 & 0 \\
		0 & 0 & 0 & 0 \\
		0 & 0 & 0 & 0 \\
		\end{pmatrix}\\
		R_{E,F}^{a,b}
		&=\begin{pmatrix}
		* & * & * & * \\
		0 & 0 & 0 & 0 \\
		* & * & * & * \\
		0 & 0 & 0 & 0 \\
		\end{pmatrix}\\
		\intertext{The other two equations are fulfilled on the visible part of $R_{E,F}^{a,b},R_{F,E}^{a,b}$ and fix the unknown third row/column relative to the unknown first row/column}
		%EF \otimes F
		R_{E,F}^{a+2,b}
		&= (-1)^{b(b-1)/2}R_{E,F}^{a,b}
		\\
		%F \otimes EF
		R_{F,E}^{a,b+2}
		&=(-1)^{a(a-1)/2}R_{F,E}^{a,b}
		\end{align*}		
		
		\item Compatibility of the $R$ matrix with the coproduct of $E$ reads
		\begin{align*}
		R\Delta(E)&=\sum\limits_{a,b}\bigg( R^{a+2,b}\bar{c}_L^{ab}(E \otimes 1)+R_{F,E}^{a+2,b}\bar{c}_L^{ab}((EF-e_1K) \otimes E)\\
		&+R^{a,b+2}\bar{c}_R^{ab}(1 \otimes E)+R_{E,F}^{a,b+2}\bar{c}_R^{ab}(E \otimes (EF-e_1K)) \bigg) (e^a \otimes e^b)\\
		\Delta(E)^{\tau}R
		&=\sum\limits_{a,b} \bigg( \bar{c}_L^{ba}R^{a,b}(1 \otimes E)+\bar{c}_L^{b+2,a+2}R_{E,F}^{a,b}(E \otimes EF)\\
		&+\bar{c}_R^{ba}R^{a,b}(E \otimes 1)+\bar{c}_R^{b+2,a+2}R_{F,E}^{a,b}(EF \otimes E) \bigg) (e^a \otimes e^b)
		\end{align*} 
		Comparing coefficients then gives the following relations
		\begin{align*}
		%E\otimes 1
		R^{a+2,b}\bar{c}_L^{ab}+R_{E,F}^{a,b+2}\bar{c}_R^{ab}(-\delta_{2\nmid b}i^b)&=\bar{c}_R^{ba}R^{a,b}\\
		%1\otimes E
		R^{a,b+2}\bar{c}_R^{ab}+R_{F,E}^{a+2,b}\bar{c}_L^{ab}(-\delta_{2\nmid a}i^a) &=\bar{c}_L^{ba}R^{a,b}\\
		%EF \otimes F
		R_{F,E}^{a+2,b}\bar{c}_L^{ab} 
		&= \bar{c}_R^{b+2,a+2}R_{F,E}^{a,b}
		\\
		%F \otimes EF
		R_{E,F}^{a,b+2}\bar{c}_R^{ab}
		&=\bar{c}_L^{b+2,a+2}R_{E,F}^{a,b}
		\end{align*}
		
		We now assume the standard form of the coproduct achieved in Proposition \ref{prop_automorphism}
		$$\bar{c}_L^{ab}= \begin{pmatrix}
		1 & d & -1 & -d \\
		1 & -d^{-1} & -1 & d^{-1} \\
		1 & d & -1 & -d \\
		1 & -d^{-1} & -1 & d^{-1} \\
		\end{pmatrix},\quad  \bar{c}_R^{ab}=\epsilon^a(-1)^{a(a-1)/2}\bar{c}_L^{ba}, \quad \epsilon=-1$$ 
		%\newline(so $E=Kf^+K$ in \cite{FGR} for $d=i$). 
		then the previous relations read explicitly
		\begin{align*}
		%F\otimes 1
		R_{E,F}^{a,b+2}\epsilon^a(-1)^{a(a-1)/2}\bar{c}_L^{ab}(\delta_{2\nmid b}i^b)
		&=(R^{a+2,b}-R^{a,b}(-1)^b(-1)^{b(b-1)/2})\bar{c}_L^{ab}
		=\begin{pmatrix}
		0 & 0 & 0 & 0 \\
		0 & 0 & 0 & 0 \\
		0 & 0 & 0 & 0 \\
		0 & 0 & 0 & 0 \\
		\end{pmatrix}\\
		R_{E,F}^{a,b}
		&=\begin{pmatrix}
		* & 0 & * & 0 \\
		* & 0 & * & 0 \\
		* & 0 & * & 0 \\
		* & 0 & * & 0 \\
		\end{pmatrix}\\
		%1\otimes E
		R_{F,E}^{a+2,b}\bar{c}_L^{ab}(-\delta_{2\nmid a}i^a) =(R^{a,b}-&R^{a,b+2}(-1)^a(-1)^{a(a-1)/2})\bar{c}_L^{ba}
		%ZWISCHENSCHRITT
		%\\&=2\begin{pmatrix}
		%0 & 0 & 0 & 0 \\
		%1 & \beta & 1 & \beta \\
		%0 & 0 & 0 & 0 \\
		%1 & -\beta & -1 & \beta \\
		%\end{pmatrix}
		%\odot
		%\begin{pmatrix}
		%1 & 1 & 1 & 1 \\
		%d & -d^{-1} & d & -d^{-1} \\
		%-1 & -1 & -1 & -1 \\
		%-d & d^{-1} & -d & d^{-1} \\
		%\end{pmatrix}\\&
		=2\begin{pmatrix}
		0 & 0 & 0 & 0 \\
		d & -d^{-1}\beta & d & -d^{-1}\beta \\
		0 & 0 & 0 & 0 \\
		-d & -d^{-1}\beta & d & d^{-1}\beta \\
		\end{pmatrix}\\
		R_{F,E}^{a,b}
		%ZWISCHENSCHRITT
		%&=\text{shift a by 2 }2i^a 
		%\begin{pmatrix}
		%0 & 0 & 0 & 0 \\
		%d & -d^{-1}\beta & d & -d^{-1}\beta \\
		%0 & 0 & 0 & 0 \\
		%-d & -d^{-1}\beta & d & d^{-1}\beta \\
		%\end{pmatrix}\odot
		%\begin{pmatrix}
		%1 & d^{-1} & -1 & -d^{-1} \\
		%1 & -d & -1 & d \\
		%1 & d^{-1} & -1 & -d^{-1} \\
		%1 & -d & -1 & d \\
		%\end{pmatrix}\\
		&=
		2\begin{pmatrix}
		* & * & * & * \\
		di & -i\beta & di & -i\beta \\
		* & * & * & * \\
		di & i\beta & -di & -i\beta \\
		\end{pmatrix}\\
		\intertext{The other two equations are fulfilled on the visible part of $R_{E,F}^{a,b},R_{F,E}^{a,b}$ and fix the unknown third row/column relative to the unknown first row/column (we use the 2-periodicity behaviour $\pm1$ of $\bar{c}_L^{ab}$)}
		%EF \otimes F
		R_{F,E}^{a+2,b}
		&= -(-1)^{b+2}(-1)^{(b+2)(b+1)/2}R_{F,E}^{a,b}
		\\
		%F \otimes EF
		R_{E,F}^{a,b+2}&=-(-1)^a(-1)^{a(a-1)/2}R_{E,F}^{a,b}
		\end{align*}		
		%GOAL: X=0,Y=-1,d=i then FGR
		\item Combining the partial matrices and the periodicity relations in the two previous bullets 
		% EF row-periodic in columns + + - - 
		% EF column-periodic in rows - + + - 
		% FE column-periodic in rows + + - - 
		% FE row-periodic in columns + - - +
		we get
		\begin{align*}
		R_{E,F}^{a,b}
		&=2\begin{pmatrix}
		X & 0 & -X & 0 \\
		0 & 0 & 0 & 0 \\
		X & 0 & X & 0 \\
		0 & 0 & 0 & 0 \\
		\end{pmatrix}\\
		R_{F,E}^{a,b}
		&=2\begin{pmatrix}
		Y & -i & Y & -i \\
		di & -i\beta & di & -i\beta \\
		Y & i & -Y & -i \\
		di & i\beta & -di & -i\beta \\
		\end{pmatrix}
		\end{align*}
		
		\item	We now turn to the full hexagon identity
		$$R_1^{(1)}\otimes R_1^{(2)} \otimes R_2
		=(\Phi_{2}\otimes\Phi_{3}\otimes \Phi_{1})^{-1} (R_1\otimes 1\otimes R_2)
		(\Phi_{1}\otimes\Phi_{3}\otimes \Phi_{2}) (1\otimes R_1\otimes R_2)
		(\Phi_{1}\otimes\Phi_{2}\otimes \Phi_{3})^{-1}
		$$
		We denote $\phi_{abc}^\pm=1,\mp \beta^2,-1,\pm \beta^2$ for $a,b$ odd and $c=0,1,2,3$ and else $\phi_{abc}=1$, and we denote by $\odot$ the component-wise multiplication of matrices.\\
		
		For the term $(1\otimes 1 \otimes 1)$ the hexagon equation holds, because $R^{ab}$ is the R-matrix of $c$. 
		
		We now consider the term $(F\otimes 1\otimes E)(e_a\otimes e_b\otimes e_c)$, and 
		\begin{align*}
		c_L^{ab}R_{F,E}^{a+b,c} &=\phi_{c+2,a+2,b}^- R_{F,E}^{a,c} \phi^+_{acb} R^{b,c} \phi^-_{abc} \\
		\intertext{$b=0$ holds trivially. We spell this equation out for $b=1$ and use $\phi_{c+2,a+2,b}^-\phi_{acb}^+=1$}
		\begin{pmatrix}
		di & -i\beta & di & -i\beta \\
		Y & i & -Y & -i \\
		di & i\beta & -di & -i\beta \\
		Y & -i & Y & -i \\
		\end{pmatrix}
		&=
		\begin{pmatrix}
		Y & -i & Y & -i \\
		di & -i\beta & di & -i\beta \\
		Y & i & -Y & -i \\
		di & i\beta & -di & -i\beta \\
		\end{pmatrix}
		\odot
		\begin{pmatrix}
		1 & \beta & 1 & \beta \\
		1 & \beta & 1 & \beta \\
		1 & \beta & 1 & \beta \\
		1 & \beta & 1 & \beta \
		\end{pmatrix}
		\odot
		\begin{pmatrix}
		1  & 1 & 1 & 1 \\
		1 & \beta^2 & -1 & -\beta^2 \\
		1 & 1 & 1 & 1 \\
		1 & \beta^2 & -1 & -\beta^2 \\
		\end{pmatrix}\\
		\intertext{This equation is fulfilled for $di=Y$.}
		%%%%%%%%%%%%%%%%%%%%%%%%%
		\intertext{The same calcuation for $E\otimes 1\otimes F$ shows clearly $X=0$ and thus $R_{E,F}^{ab}=0$.\newline
			The calculation for $E\otimes F\otimes EF$ relates $R_{EF,EF}^{a+b,c}$ to the product of $R_{E,F}$ and $R_{F,E}$, so we have $R_{EF,EF}^{a,b}=0$. Hence  $(F\otimes 1\otimes E)$ and $(1\otimes F\otimes E)$ are the only terms appearing. }
		\end{align*}

		\item We now turn to the other hexagon identity
		$$R_1\otimes R_2^{(1)}\otimes R_2^{(2)}
		=(\Phi_{3}\otimes\Phi_{1}\otimes \Phi_{2}) (R_1\otimes 1\otimes R_2)
		(\Phi_{2}\otimes\Phi_{1}\otimes \Phi_{3})^{-1} (R_1\otimes R_2\otimes 1)	
		(\Phi_{1}\otimes\Phi_{2}\otimes \Phi_{3})
		$$
		We consider the term $(F\otimes E\otimes 1)$ for $c=1$ using 
		$\phi^-_{b+2,a+2,c}\phi^+_{abc}=1$
		$$
		\bar{c}_L^{bc}R_{F,E}^{a,b+c} =\phi_{b+2,c,a+2}^+ R^{a+2,c} \phi^-_{b+2,a+2,c} R_{F,E}^{a,b} \phi^+_{abc}$$
		\begin{align*}
		&\begin{pmatrix}
		d &  d & d & d \\
		-d^{-1} & -d^{-1} & -d^{-1} & -d^{-1} \\
		d &  d & d & d \\
		-d^{-1} &  -d^{-1} & -d^{-1} & -d^{-1} \\
		\end{pmatrix}
		\odot 
		\begin{pmatrix}
		-i & Y & -i &  Y \\
		-i\beta & di & -i\beta & di\\
		i & -Y & -i & Y \\
		i\beta & -di & -i\beta & di\\
		\end{pmatrix}\\
		&=
		\begin{pmatrix}
		1 & -1 & 1 & -1 \\
		1 & \beta^2 & 1 & \beta^2 \\
		1 & 1 & 1 & 1 \\
		1 & -\beta^2 & 1 & -\beta^2 \\
		\end{pmatrix}
		\odot
		\begin{pmatrix}
		-1 & -1 & -1 & -1 \\
		-\beta & -\beta & -\beta & -\beta \\
		1 & 1 & 1 & 1 \\
		\beta & \beta & \beta & \beta \\
		\end{pmatrix}
		\odot
		\begin{pmatrix}
		Y & -i & Y & -i \\
		di & -i\beta & di & -i\beta \\
		Y & i & -Y & -i \\
		di & i\beta & -di & -i\beta \\
		\end{pmatrix}
		%%%%%%%%%%
		\intertext{These equations hold iff $d^2=-1$ (all equations with odd $a+b$).\qedhere}
		\end{align*} 
	\end{enumerate}
\end{proof}

\noindent
We formulate the results of this section:

\begin{theorem}
	The abelian category $\Rep(U)$ admits up to equivalence a unique structure of a braided tensor category such that $\V,\overline{\V}:\Rep(C) \to \Rep(U)$ are braided oplax tensor functors with the braiding on $C$ given by the quadratic form $Q(k)=\beta^{(k^2)}$, for fixed $\beta^4=-1$.
\end{theorem}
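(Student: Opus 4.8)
The plan is to assemble the preceding propositions into a single existence-and-uniqueness statement, tracking how the choices made at each stage collapse to one braided tensor category up to equivalence. First I would invoke the categorical machinery already in place: by Proposition~\ref{prop1} we realize the given abelian category as $\Rep(U)$ for a quasi bialgebra, Proposition~\ref{prop2} supplies the inclusion $C \subset U$ preserving coproduct, counit and algebra structure, and since the hypotheses of Proposition~\ref{oplax} are met --- the grading-plus-coassociativity argument recorded before Proposition~\ref{prop_Vreg} shows $\delta(1)$ is a $U$-$U$-bimodule generator --- Theorem~\ref{Catthm} lets us pass to a twist-equivalent $U$ with $\delta(1)=1\otimes 1$ and $\Phi_U=\Phi_C$. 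This pins down the associator once and for all and reduces the whole problem to classifying coproducts and $R$-matrices compatible with this fixed $\Phi_C$.

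Second, I would run the classification chain for the \emph{tensor} structure. Proposition~\ref{prop_Vreg} classifies the $\Z_4$-graded coalgebra structures on $\V(C_\reg)$ and $\overline{\V}(C_\reg)$ in terms of scalars $c_a,\bar c_a$ and roots $\epsilon,\bar\epsilon$; Proposition~\ref{prop_EF} feeds these into an arbitrary coproduct on $E,F$, kills all mixed $E$-$F$ terms, and forces the constraints $\epsilon^2=\bar\epsilon^2=1$, $\epsilon\bar\epsilon=-1$, and $c_{t+2}/c_t=-\bar c_{t+2}/\bar c_t$; finally Proposition~\ref{prop_automorphism} uses the algebra automorphisms $F\mapsto F(\sum_a x_a e_a)$, $E\mapsto E(\sum_a \bar x_a e_a)$ to absorb the scalars $c_a,\bar c_a$, leaving a single free parameter $d$ (and a sign $\epsilon$ that disappears after interchanging $E,F$). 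So up to this point the quasi-bialgebra structures realizing the two oplax functors form a one-parameter family.

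Third, I would impose braidability. Proposition~\ref{braiding} computes every extension of the $R$-matrix of $C$ to $U$ and shows such an extension exists precisely when $d=\pm i$, cutting the family down to finitely many candidates; the remaining task is to see these are all braided-equivalent. Here I would lean on the observations in the Example after Proposition~\ref{prop_automorphism}: the sign $\epsilon=-1$ gives an isomorphic quasi-Hopf algebra after swapping $E,F$, and $d=-i$ gives one after switching $K\leftrightarrow K^{-1}$ and replacing $\beta$ by $i\beta$. Since these are algebra (anti)automorphisms compatible with the quasi-triangular structure and the coassociator, they induce braided tensor equivalences, so all surviving choices yield a single braided tensor category up to equivalence --- exactly the one realized by $\tilde u_i(\mathfrak{sl}_2)$.

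I expect the main obstacle to be this last step: organizing the residual discrete choices (the two values $d=\pm i$, the sign $\epsilon$, and the fourth root $\beta$ with $\beta^4=-1$) and checking that the isomorphisms relating them genuinely descend to equivalences of \emph{braided tensor} categories, not merely isomorphisms of the underlying quasi bialgebras. In particular one must verify that swapping $E,F$ or exchanging $K\leftrightarrow K^{-1}$ is simultaneously compatible with the $R$-matrix and with the fixed associator $\Phi_C$, and this is where the bookkeeping of signs and powers of $\beta$ is most delicate; the preceding propositions do the heavy computational lifting, so the final theorem is essentially a careful collation of their outputs.
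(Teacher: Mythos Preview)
Your proposal is correct and follows essentially the same route as the paper: the theorem is presented there as a summary of Section~4, and the argument you outline---fix the associator via Theorem~\ref{Catthm}, classify the coalgebra structures (Proposition~\ref{prop_Vreg}), determine $\Delta(E),\Delta(F)$ (Proposition~\ref{prop_EF}), normalize by automorphisms to a one-parameter family in $d$ (Proposition~\ref{prop_automorphism}), and cut down to $d=\pm i$ via braidability (Proposition~\ref{braiding}), then identify the survivors using the Example---matches the paper's own proof sketch in the introduction step for step. Your flagged obstacle, namely the bookkeeping showing that the discrete choices $d=\pm i$ and $\epsilon=\pm1$ all yield braided-equivalent categories, is exactly the point the paper handles only tersely in the Example following Proposition~\ref{prop_automorphism}.
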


\noindent It follows from Theorem \ref{thm:oplax} that the triplet algebra $\mathcal{W}(2)$ admits two such oplax tensor functors $\V,\overline{\V}:\Rep V_L \to \Rep \mathcal{W}(2)$ where the $Q$ is determined by the quadratic form of the lattice $L=\sqrt{2} A_1$, so $\beta=e^{2\pi i(1/8)}$. It then follows that the abelian category $\mathrm{Rep}(U)$ of the Kazhdan-Lusztig dual $U$ of $\mathcal{W}(2)$ must admit the functors described in Theorem \ref{classif}. We therefore obtain as a corollary a proof of the Kazhdan-Lusztig conjecture for $\mathcal{W}(2)$:
\begin{corollary}
There is a braided tensor equivalence of  $\Rep(\mathcal{W}(2))$  to the representation category of the previously constructed quasi-Hopf algebra for this value of $\beta$. The quasi Hopf algebra coincides with the one constructed in \cite{FGR2}.
\end{corollary}

%SL: One should classify ribbon elements as in my local file

\newcommand\arxiv[2]      {\href{http://arXiv.org/abs/#1}{#2}}

\end{document}